\DeclareMathOperator\Z{\mathbb Z}
\newcommand{\Om}{\Omega}
\newcommand{\G}{{\mathbb G}}
\newtheorem{theorem}{Theorem}[section]
\newtheorem{lemma}[theorem]{Lemma}
\newtheorem{cor}[theorem]{Corollary}
\newtheorem{prop}[theorem]{Proposition}
\theoremstyle{definition}
\newtheorem{example}[theorem]{Example}
\theoremstyle{remark}
\newtheorem{remark}[theorem]{Remark}
\newcommand{\dontprint}[1]\relax
\newcommand{\Th}{\Theta}
\renewcommand{\P}{{\mathbb P}}
\newcommand{\A}{{\mathbb A}}
\newcommand{\wt}{\widetilde}
\newcommand{\ot}{\otimes}
\newcommand{\Hom}{\operatorname{Hom}}
\newcommand{\End}{\operatorname{End}}
\newcommand{\Ext}{\operatorname{Ext}}
\renewcommand{\AA}{{\mathcal A}}
\newcommand{\BB}{{\mathcal B}}
\newcommand{\NN}{{\mathcal N}}
\newcommand{\OO}{{\mathcal O}}
\newcommand{\si}{\sigma}
\newcommand{\de}{\delta}
\newcommand{\sub}{\subset}
\newcommand{\Spec}{\operatorname{Spec}}
\newcommand{\Nm}{\operatorname{Nm}}
\newcommand{\lan}{\langle}
\newcommand{\ran}{\rangle}
\newcommand{\ov}{\overline}
\newcommand{\im}{\operatorname{im}}
\newcommand{\om}{\omega}
\renewcommand{\a}{\alpha}
\newcommand{\tr}{\operatorname{tr}}
\newcommand{\id}{\operatorname{id}}
\newcommand{\und}{\underline}
\renewcommand{\th}{\theta}
\newcommand{\coker}{\operatorname{coker}}
\newcommand{\hra}{\hookrightarrow}
\newcommand{\we}{\wedge}
\newcommand{\rk}{\operatorname{rk}}
\newcommand{\eps}{\epsilon}
\newcommand{\gr}{\operatorname{gr}}
\numberwithin{equation}{section}
\title{De Rham cohomology for supervarieties}
\author{Alexander Polishchuk}
\thanks{Partially supported by the NSF grant DMS-2001224, 
and within the framework of the HSE University Basic Research Program and by the Russian Academic Excellence Project `5-100'.}
\address{
    Department of Mathematics, 
    University of Oregon, 
    Eugene, OR 97403, USA; and National Research University Higher School of Economics, Moscow, Russia
  }
  \email{apolish@uoregon.edu}
\begin{document}

\begin{abstract} We study the de Rham cohomology and the Hodge to de Rham spectral sequence for supervarieties.
\end{abstract}

\maketitle

\section{Basics}

\subsection{Introduction}

Recall (see e.g.,\cite{Gr}) that for a smooth proper scheme $X_0$ over a field $k$, one defines the de Rham cohomology
$H_{dR}(X_0)$ as the hypercohomology of the de Rham complex $\Om^\bullet_{X_0}$.
One has a natural {\it Hodge to de Rham} spectral sequence
$$E_1^{pq}(X_0)=H^q(X,\Om^p_{X_0}) \implies H^{p+q}_{dR}(X_0).$$
In characteristic zero it is known to degenerate at $E_1$ (this can be deduced from the Hodge theory).

In this paper we study the similar spectral sequence for a smooth proper supervariety $X$.
It is easy to see that it often does not degenerate at $E_1$, and its behavior is much more complicated.

We prove that the de Rham cohomology of a smooth supervariety $X$ coincides with the de Rham cohomology of the underlying variety $X_0$ (see Theorem \ref{deR-thm}).
We show that the Hodge to de Rham spectral sequence of a smooth proper supervariety $X$ degenerates at $E_2$ if $X$ is split (see Theorem \ref{split-thm}) or has dimension $n|2$ (see 
Theorem \ref{n2-dim-thm}). In the latter case we also give a formula for dimensions of the terms $E_2^{pq}(X)$.
On the other hand, we give an example showing that this spectral sequence does not always degenerate at the $E_2$ page if $X$ has dimension $1|3$ (see Theorem \ref{nondeg-E2-thm}).
This answers negatively the question raised in \cite[Problem 5.14]{CNR}.

An interesting phenomenon is that the limiting terms $(E_\infty^{pq}(X))$ of the Hodge to de Rham sequence can differ from $(E_\infty^{pq}(X_0))$ 
(e.g., this happens for the supergrassmannian $G(1|1,2|2)$, see Example \ref{Gr-ex}).
Thus, the way the dimension of $H^i_{dR}(X_0)$ gets distributed among $E_\infty^{pq}(X)$ with $p+q=i$ gives interesting numerical invariants of a supervariety
(which cannot be recovered from $X_0$ unless $X$ is projected).

We refer to \cite{Manin} and \cite{BHRP} for basics on algebraic supergeometry. We work with algebraic supervarieties and Zariski topology, but the analogous picture holds
in the complex analytic case. Note that even though one can introduce analogs of $(p,q)$-forms on the CS manifold underlying a complex supermanifold,
there seems to be no analog of the Hodge decomposition in the presence of odd variables. The obvious problem is that the de Rham complex $\Om^\bullet_X$ is unbounded,
and the twisted dual $(\Om^\bullet_X)^\vee\ot \om_X$ (where $\om_X$ is the Berezinian) is a different complex, the complex of integral forms.

In many respects the study of supervarieties is similar to the study of nilpotent extensions of smooth schemes (in the category of usual even schemes).
As a manifestation of this, we prove a result in even algebraic geometry (inspired by the analogy with supervarieties) that states surjectivity of the map on cohomology of structure sheaves
for nilpotent extensions of smooth proper schemes in characteristic zero  (see Theorem \ref{nilp-sur-thm}).

The paper is organized as follows. 
In Sec.\ \ref{deR-sec} we prove the comparison theorem on de Rham cohomology of a supervariety $X$ (Theorem \ref{deR-thm}). In Sec.\ \ref{deR-rel-sec} we
discuss relative de Rham cohomology and the Gauss-Manin connection. In Sec.\ \ref{H-dR-sec} we present some general observations on the Hodge to de Rham
spectral sequence for supervarieties. In Sec.\ \ref{even-sec} we prove a result on even nilpotent extensions of smooth proper schemes (Theorem \ref{nilp-sur-thm}).
Next, in Sec.\ \ref{comp-sec} we present some more specific computations. In Sec.\ \ref{split-sec} we consider split supervarieties as well as supervarieties in characteristic $2$, 
in Sec.\ \ref{n2-sec}---
supervarieties of dimension $n|2$, and in Sec. \ref{13-dim-sec}---an example showing that in dimension $1|3$, the differential $d:(\OO_X)_-\to (\Om^1_X)_-$
does not always induce injection on $H^1$.

\bigskip

\noindent
{\it Acknowledgments}. I am grateful to Vera Serganova and to Johan de Jong for useful discussions. Part of this work was done while the author visited 
the Simons Center for Geometry and Physics. I'd like to thank this institution for hospitality and excellent working conditions.

\bigskip

\noindent
{\it Conventions}. For a $\Z/2$-graded object $X$ we denote by $X_+$ and $X_-$ its even and odd parts.
We use the parity of differential forms such that the de Rham differential is even, so the de Rham complex can be viewed as a complex of $\Z/2$-graded sheaves.

\subsection{De Rham cohomology of a supervariety}\label{deR-sec}

Let $X$ be a smooth superscheme over a field $k$.
Then we have the de Rham complex $\Om_X^\bullet=(\Om_X^\bullet,d)$, which we view as a bounded below complex of $k$-modules in Zariski topology.
Therefore, we can define de Rham cohomology of $X$ as its hypercohomology:
$$H^\bullet_{dR}(X):=H^\bullet(X,\Om_X^\bullet).$$
Note that as soon as $X$ has nonzero odd dimension, the complex $\Om_X^\bullet$ is not bounded.

Let $\NN\sub \OO_X$ denote the ideal generated by odd functions,
and let $X_0$ denote the bosonization of $X$, so $\OO_{X_0}=\OO_X/\NN$. 
We have a natural surjective map of complexes of $k$-vector spaces
$$\Om_X^\bullet\to \Om_{X_0}^\bullet.$$
Let us denote by $K$ its kernel.

\begin{theorem}\label{deR-thm} 
The complex $K$ is acyclic. Hence, one has a natural isomorphism 
$H_{dR}(X)\rTo{\sim} H_{dR}(X_0)$.
\end{theorem}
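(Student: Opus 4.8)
The plan is to filter the de Rham complex $\Om_X^\bullet$ by powers of the nilpotent ideal $\NN$ and reduce the acyclicity of $K$ to an explicit contraction on the associated graded pieces. Concretely, write $R=\OO_X$ and pick, Zariski-locally on $X_0$, a system of coordinates $x_1,\dots,x_n$ (even) and $\xi_1,\dots,\xi_m$ (odd), so that $\Om_X^\bullet$ is locally the tensor product of the even de Rham complex in the $x_i$ with the free graded-commutative algebra on the $dx_i$, $\xi_j$, $d\xi_j$. The kernel $K$ is the part of this complex on which at least one $\xi_j$ or $d\xi_j$ appears, i.e. $K=\Om_X^\bullet\cdot(\xi_1,\dots,\xi_m,d\xi_1,\dots,d\xi_m)$; since the statement is about acyclicity of a complex of sheaves, and acyclicity is local, it suffices to prove that $\Gamma$ of $K$ on a coordinate patch is acyclic, which is a purely algebraic assertion about the polynomial-type algebra above.

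The key step is to exhibit a contracting homotopy for $K$ on such a patch. The de Rham differential acts on the $\Lambda$-factor generated by $\xi_1,\dots,\xi_m,d\xi_1,\dots,d\xi_m$ by $d\xi_j\mapsto d\xi_j$, $d(d\xi_j)=0$; this is (up to the even coefficients, which $d$ also differentiates, but those form an acyclic Poincaré-lemma complex themselves) a Koszul-type complex. The cleanest route is to set up the homotopy $h=\sum_j \iota_{\pa/\pa(d\xi_j)}$, the contraction/antiderivation that deletes a $d\xi_j$; then $dh+hd$ acts on a monomial as (number of $d\xi_j$'s appearing) $+$ (Euler-type operator counting $\xi_j$'s turned into $d\xi_j$'s), and one checks that on $K$ this operator is invertible — it is multiplication by a positive integer on each monomial not lying in $\Om_{X_0}^\bullet$. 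One then incorporates the even Poincaré lemma for the $x_i$ by a standard double-complex / spectral sequence argument, or simply observes that after killing the odd part the remaining differential in the $x_i$-direction has the usual contracting homotopy. Assembling these over a cover and using that acyclicity of a bounded-below complex of sheaves can be checked on stalks (or on an affine cover via a Čech argument) yields that $K$ is acyclic as a complex of sheaves. The long exact sequence of hypercohomology for $0\to K\to\Om_X^\bullet\to\Om_{X_0}^\bullet\to 0$ then gives $H_{dR}(X)\xrightarrow{\sim}H_{dR}(X_0)$, and naturality is automatic from naturality of the surjection $\Om_X^\bullet\to\Om_{X_0}^\bullet$.

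The main obstacle I anticipate is bookkeeping rather than conceptual: the de Rham differential does not respect the naive $\NN$-adic filtration (differentiating an even coordinate can lower the $\xi$-degree count only via $\pa\xi_j/\pa$ terms, but $d$ of an odd function genuinely mixes the filtration because $d\xi_j$ is "new"), so one must be careful to choose the grading with respect to which $d$ is homogeneous — the right choice is the total weight (number of $\xi_j$ factors) $+$ (number of $d\xi_j$ factors), under which $d$ is homogeneous of degree $0$, and then $K$ is exactly the sub-complex of positive total weight. Verifying that the homotopy $h$ above is $\OO_{X_0}$-linear in the appropriate sense, globalizes, and that $dh+hd$ is genuinely invertible on $K$ (not merely injective) is the technical heart. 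A secondary point to handle carefully is that $\Om_X^\bullet$ is unbounded, so one should phrase hypercohomology and the homotopy argument so that convergence is not an issue — e.g. argue weight-piece by weight-piece, each of which is a bounded complex, or note that the homotopy is locally finite on each monomial.
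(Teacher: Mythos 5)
Your overall strategy --- decompose $\Om^\bullet_X$ locally by the total weight in the odd variables (number of $\xi_j$'s plus number of $d\xi_j$'s), identify $K$ with the positive-weight part, and contract each positive-weight piece --- is essentially a local-coordinate version of the paper's argument: the paper runs the same mechanism coordinate-freely via the filtration $F^\bullet_\NN$ generated by $(\NN^i,\NN^{i-1}d\NN,\dots,(d\NN)^i)$, whose associated graded is $\Om^\bullet_{X_0}\ot{\bigwedge}^\bullet(\NN/\NN^2)\ot S^\bullet(\NN/\NN^2)$ with the Koszul differential, exact in positive weight. However, two points in your write-up are wrong as stated. First, the operator $h=\sum_j\iota_{\pa/\pa(d\xi_j)}$ that ``deletes a $d\xi_j$'' does not satisfy $dh+hd=(\text{weight})$: already $(dh+hd)(\xi_1)=h(d\xi_1)=1\neq\xi_1$. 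The correct homotopy is the contraction $i_v$ with the Euler vector field $v=\sum_j\xi_j\pa_{\xi_j}$, i.e.\ the antiderivation with $i_v(d\xi_j)=\xi_j$ and $i_v(\xi_j)=i_v(x_i)=i_v(dx_i)=0$; Cartan's formula then gives $di_v+i_vd=L_v=$ multiplication by the weight (this is exactly the operator the paper uses in the split case, Theorem \ref{split-thm}). Second, there is no ``even Poincar\'e lemma'' to incorporate: in the Zariski topology $\Om^\bullet_{X_0}$ is \emph{not} acyclic as a complex of sheaves --- its hypercohomology is $H_{dR}(X_0)$, which is precisely what must survive --- and if your argument needed that acyclicity it would prove $H_{dR}(X)=0$. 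Fortunately it is also unnecessary: $L_v$ acts by the weight on the entire positive-weight part, with the coefficients in the $x_i$ and the $dx_i$ factors carried along untouched, so the double-complex step should simply be deleted.

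The one remaining genuine gap is the characteristic. Your contraction is $\tfrac{1}{w}\,i_v$ on the weight-$w$ piece, so it requires $w$ to be invertible in $k$; since the $d\xi_j$ occur to arbitrary powers, every positive integer occurs as a weight, and the homotopy argument as given only works in characteristic zero. The theorem, as the paper remarks, holds in any characteristic. The paper gets this because exactness of the positive-weight part of the Koszul complex ${\bigwedge}^\bullet V\ot S^\bullet V$ holds over any field without dividing by the weight (for $V$ of rank one the weight-$n$ piece is the isomorphism $k\cdot\eps t^{n-1}\to k\cdot t^n$, and the general case follows by taking tensor products). If you are content to prove the statement in characteristic zero, your proof with the corrected homotopy is complete; to get all characteristics you should replace ``$dh+hd$ is invertible'' by a direct proof of exactness of each weight-graded piece along the lines above.
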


\begin{proof}
Let us introduce the decreasing algebra filtration $(F^\bullet_\NN)$ on $\Om_X^\bullet$ by letting
$F^i_\NN$ be the ideal (with respect to the algebra structure on $\Om_X^\bullet$) generated by
$$(\NN^i,\NN^{i-1}d\NN,\NN^{i-2}(d\NN)^2,\ldots,(d\NN)^i).$$
Then $K^\bullet=F^1_\NN$.

We claim that for $i\ge 1$, the complex $\gr^i_F:=F^i_\NN/F^{i+1}_\NN$ is exact.
Let us consider the $\OO_{X_0}$-algebra $\AA:=\gr^\bullet_F\Om^\bullet$, equipped with the differential $d_\AA$ induced by $d$.
Note that $\gr^0_F=\Om^\bullet_{X_0}$. We have an embedding of $\OO_{X_0}$-algebras 
$$\AA_0:=\Om^\bullet_{X_0}\ot {\bigwedge}^\bullet(\NN/\NN^2)\hra \AA.$$
Let us set 
$$\AA_n:=\AA_0\cdot \gr^{\le n}_F\sub \AA.$$
Then $(\AA_n)$ is an increasing algebra filtration, and we have an algebra isomorphism
$$\ov{\AA}:=\bigoplus_n \AA_n/\AA_{n-1}\simeq \AA_0\ot S^\bullet(\NN/\NN^2)\simeq \Om^\bullet_{X_0}\ot K(\NN/\NN^2),$$
where $K(\NN/\NN^2):={\bigwedge}^\bullet(\NN/\NN^2)\ot S^\bullet(\NN/\NN^2)$.
Furthermore, the differential $d_\AA$ sends $\AA_n$ to $\AA_{n+1}$, and the induced differential $\AA_n/\AA_{n-1}\to \AA_{n+1}/\AA_n$
on $\ov{\AA}$ is induced by the Koszul differential on $K(\NN/\NN^2)$. Finally, we observe that 
$\bigoplus_n (\AA_n\cap \gr^{\ge 1}_F)/(\AA_{n-1}\cap \gr^{\ge 1}_F)$ gets identified with $\Om^\bullet_{X_0}\ot K_{\ge 1}(\NN/\NN^2)$,
where $K_{\ge 1}(\NN/\NN^2)\sub K(\NN/\NN^2)$ is the subcomplex of elements of weight $\ge 1$ (i.e., the augmentation ideal). Now
our assertion follows from the exactness of $K_{\ge 1}(\NN/\NN^2)$ (in fact, it is homotopic to zero).
\end{proof}

\begin{remark}
Note that Theorem \ref{deR-thm} holds in any characteristic.
In the complex analytic context (and hence, in characteristic zero) Theorem \ref{deR-thm} can also be deduced from the fact that $\Om_X^\bullet$ is a resolution of the constant sheaf on $X$.
\end{remark}

\subsection{Relative de Rham cohomology, actions of supergroups and the Gauss-Manin connection}\label{deR-rel-sec}

More generally, if $f:X\to S$ is a smooth proper morphism of superschemes then we have the relative de Rham complex $\Om_{X/S}^\bullet$,
and we can form the corresponding relative de Rham cohomology sheaves of $\OO$-modules on $S$,
$$H^i_{dR}(X/S):=\und{H}^i Rf_*(\Om^\bullet_{X/S}).$$
Assume that $S$ is Noetherian. Since the sheaves $R^q(\Om^p_{X/S})$ are coherent, it follows that $H^i_{dR}(X/S)$ are coherent sheaves.

It is easy to see that the formation of $H^i_{dR}(X/S)$ is compatible with flat base changes $S'\to S$.
In particular, if we start with $X$, proper and smooth over the ground field $k$, then for any Noetherian commutative $k$-superalgebra $A$, we can consider the base
change $X_A\to \Spec(A)$, and we have natural isomorphisms
$$H^i_{dR}(X)\ot A\simeq H^i_{dR}(X_A/A).$$

This shows that if an algebraic $k$-supergroup $G$ acts on $X$ then $H^i_{dR}(X)$ have natural structure of $G$-representations.
Indeed, for $A$ as above, the group $G(A)$ acts on $X_A$ and this gives its action on $H^i_{dR}(X_A/A)\simeq H^i_{dR}(X)\ot A$. 

\begin{prop} Assume the characteristic is zero. 
If the underlying usual algebraic group $G_0$ is connected then $H^i_{dR}(X)$ is trivial as a $G$-representation.
\end{prop}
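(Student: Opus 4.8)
The plan is to prove the stronger infinitesimal statement that the Lie superalgebra $\fg=\operatorname{Lie}(G)$ acts trivially on $H^i_{dR}(X)$, and then to upgrade this to triviality of the $G$-representation using that the characteristic is zero and that $G_0$ is connected. For the first step I would differentiate the action. The action morphism $a\colon G\times X\to X$ yields, upon differentiation at the identity, a Lie superalgebra homomorphism $\fg\to \Ga(X,\TT_X)$, $\xi\mapsto v_\xi$. For the test algebra $A=k[\eps]$ with $\eps$ of the same parity as $\xi$ and $\eps^2=0$, the element of $G(A)$ corresponding to $\xi$ acts on $X_A$ as $\id+\eps\,v_\xi$, hence it acts on $H^i_{dR}(X_A/A)\simeq H^i_{dR}(X)\ot A$ as $\id+\eps\,\LL_{v_\xi}$, where $\LL_v$ denotes the Lie derivative operator on the de Rham complex $\Om^\bullet_X$. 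Thus the differential of the representation $G\to \GL(H^i_{dR}(X))$ sends $\xi$ to the endomorphism of $H^i_{dR}(X)$ induced by $\LL_{v_\xi}$ on hypercohomology.

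Now the crucial input is Cartan's homotopy formula: for every homogeneous global vector field $v$ on $X$ one has $\LL_v=d\,\iota_v\pm\iota_v\,d$ as operators on $\Om^\bullet_X$, where $\iota_v\colon\Om^p_X\to\Om^{p-1}_X$ is the interior product. This exhibits $\LL_v$ as null-homotopic as an endomorphism of the bounded below complex of sheaves of $k$-modules $\Om^\bullet_X$, with homotopy $\pm\iota_v$; consequently $\LL_v$ induces the zero endomorphism of $H^\bullet(X,\Om^\bullet_X)=H^\bullet_{dR}(X)$. Applying this with $v=v_\xi$ for all $\xi\in\fg$ shows that the differential of $G\to\GL(H^i_{dR}(X))$ vanishes, i.e.\ $\fg$ acts trivially on $H^i_{dR}(X)$.

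It remains to pass from the vanishing of the infinitesimal action to triviality of the $G$-action, and this is where the hypotheses enter. In characteristic zero a representation of $G$ is the same as a representation of the associated Harish--Chandra pair $(G_0,\fg)$, that is, a super vector space equipped with compatible actions of $G_0$ and of $\fg$, and such a representation is trivial precisely when both of these actions are trivial. Since the $\fg$-action on $H^i_{dR}(X)$ is trivial, in particular the action of $\fg_0=\operatorname{Lie}(G_0)$ is trivial, and because $G_0$ is connected and the characteristic is zero this forces the $G_0$-action to be trivial as well; hence the $G$-representation $H^i_{dR}(X)$ is trivial. (Alternatively, Theorem~\ref{deR-thm} shows that $H^i_{dR}(X)\simeq H^i_{dR}(X_0)$ is purely even, so that $\fg_1$ acts by zero for parity reasons and one only needs the classical statement that a connected algebraic group in characteristic zero acting with trivial Lie-algebra action acts trivially.) The substantive part of the argument is the null-homotopy of the Lie derivatives, which is elementary Cartan calculus; the point that requires care is the final passage from the infinitesimal to the global action — this is the only place that uses connectedness of $G_0$ and characteristic zero, with the odd directions handled either by the Harish--Chandra formalism or simply by the parity observation above.
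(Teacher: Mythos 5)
Your proposal is correct and follows essentially the same route as the paper: reduce to showing the Lie superalgebra acts trivially (using connectedness and characteristic zero), then kill the induced Lie derivative $L_{v}$ on the de Rham complex via the Cartan homotopy $\eta\mapsto i_v(\eta)$. Your extra care about the passage from the infinitesimal to the global action (Harish--Chandra pairs, or the observation that $H^i_{dR}(X)$ is purely even) only spells out what the paper leaves implicit in the phrase ``it suffices to show that the corresponding Lie superalgebra acts trivially.''
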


\begin{proof}
It suffices to show that the corresponding Lie superalgebra acts trivially. Thus, it is enough to show that for every global vector field $v$ (even or odd),
the automorphism $f\mapsto f+v(f)\eps$ of $X_{D}$, where $D=k[\eps]/(\eps^2)$ ($\eps$ is either even or odd), induces the trivial automorphism
of $H^i_{dR}(X_D/D)$. But this immediately follows from the fact that the corresponding automorphism of the relative de Rham complex $\Om^\bullet_{X_D/D}$,
given by $\eta\mapsto \eta+L_v(\eta)\eps$ (where $L_v$ is the Lie derivative)
is homotopic to the identity via the homotopy $\eta\mapsto i_v(\eta)$ (the contraction by $v$).
\end{proof}

In the case when $f:X\to S$ is a smooth proper morphism, where $S$ is smooth over a field $k$, similarly to the classical case (see e.g., \cite[Sec.\ IV]{Katz}) one can
construct the {\it Gauss-Manin connection} on $H^i_{dR}(X/S)$. Namely, consider the filtration $F^i$ on the absolute de Rham complex $\Om^\bullet_X$, where
$F^i$ is the image of $f^*\Om^i_S\ot \Om^{\bullet-i}_X\to \Om^\bullet_X$. Then we have natural identifications
$$\gr^i_F \Om^\bullet_X\simeq f^*\Om^i_S\ot \Om^{\bullet}_{X/S}[-i].$$
The connection on $H^i_{dR}(X/S)$ is defined as the connecting homomorphism
$$\nabla:H^i_{dR}(X/S)\simeq R^if_*\gr^0_F\to R^{i+1}f_*\gr^1_F\simeq \Om^1_S\ot H^i_{dR}(X/S).$$ 
Its integrability is proved by considering the entire page of the corresponding spectral sequence.

As in the classical case, this implies that for smooth $S$, the coherent sheaves $H^i_{dR}(X/S)$ on $S$ are in fact locally free (using \cite[Sec.\ 2.2.1]{Penkov}),
and that the formation of the $H^i_{dR}(X/S)$ is compatible with arbitrary base change. 

Note that 
the sheaves $R^qf_*\Om^p_X$ are not necessarily locally free (they are in the even case, in characteristic zero): for example, 
there exist families of supercurves $f:X\to S$ such that $R^1f_*\OO_X$
is not locally free (see \cite[Sec.\ 3.3]{FKP}).


\subsection{Spectral sequence: general observations}\label{H-dR-sec}

From now on we assume that $X$ is proper and smooth over the ground field $k$.

As in the even case, we associate with a complex of sheaves $\Om_X^\bullet$ the 
spectral sequence
$$E_1^{pq}(X)=H^q(X,\Om^p_X) \implies H^{p+q}_{dR}(X),$$
to which we refer as {\it Hodge to de Rham} spectral sequence.
By Theorem \ref{deR-thm}, its limit $H^\bullet_{dR}(X)$ is identified with $H^\bullet_{dR}(X_0)$

In the even case, assuming that the characteristic is zero, the Hodge to de Rham spectral sequence
is known to degenerate at the page $E_1$. There is not much we can say about this spectral sequence for general supervarieties, even in characteristic zero.
Already the example of $X$ of dimension $0|n$ shows that it does not have to degenerate at $E_1$. 
We will prove that it degenerates at $E_2$ if $X$ is split or has dimension $n|2$ (see Theorems \ref{split-thm} and \ref{n2-dim-thm}).
We will also show that there exist $X$ of dimension $1|3$ such that the Hodge to de Rham spectral sequence of $X$ does not degenerate at $E_2$
(see Theorem \ref{nondeg-E2-thm}).

At present, we do not know an example of $X$ in characteristic zero such that $E_2(X)$ is not finite-dimensional.




\medskip


\begin{prop}
We always have an isomorphism $E_2^{00}(X)\simeq H^0_{dR}(X_0)$
and an injection
\begin{equation}\label{E2-10-inj}
E_2^{10}(X)\hra H^0(\Om^1_{X_0}).
\end{equation}
\end{prop}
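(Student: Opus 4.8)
The plan is to analyze the first two columns of the $E_1$ page directly. Recall $E_1^{pq}(X) = H^q(X,\Om^p_X)$ and the differential $d_1\colon E_1^{p,q}\to E_1^{p+1,q}$ is induced by the de Rham differential. For the first claim we compute $E_2^{00}(X) = \ker(d_1\colon H^0(\OO_X)\to H^0(\Om^1_X))$, since there is no incoming differential at $(0,0)$. An element of $H^0(\OO_X)$ in this kernel is a global function $f$ with $df = 0$; such an $f$ is locally constant on $X$, hence on $X_0$. One then identifies the space of such $f$ with $H^0_{dR}(X_0) = H^0(X_0,\Om^\bullet_{X_0})$, using that $H^0_{dR}(X) \simeq H^0_{dR}(X_0)$ by Theorem \ref{deR-thm} and that $H^0_{dR}$ is computed as the kernel of $d$ on global functions (the zeroth column of both spectral sequences contributes to $H^0_{dR}$, and in the lowest total degree the spectral sequence abutment gives $H^0_{dR}(X) = E_\infty^{00}(X) = E_2^{00}(X)$ since no higher differentials can hit or leave $(0,0)$). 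Thus $E_2^{00}(X)\simeq H^0_{dR}(X)\simeq H^0_{dR}(X_0)$.

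For the injection \eqref{E2-10-inj}, note $E_2^{10}(X)$ is a subquotient of $E_1^{10}(X) = H^0(X,\Om^1_X)$: precisely, it is the kernel of $d_1\colon H^0(\Om^1_X)\to H^0(\Om^2_X)$ modulo the image of $d_1\colon H^0(\OO_X)\to H^0(\Om^1_X)$. The strategy is to exploit the surjection of complexes $\Om^\bullet_X\to \Om^\bullet_{X_0}$ with acyclic kernel $K$ established in Theorem \ref{deR-thm}. Functoriality of the Hodge to de Rham spectral sequence in the morphism of complexes $\Om^\bullet_X \to \Om^\bullet_{X_0}$ gives a map $E_r^{pq}(X)\to E_r^{pq}(X_0)$ compatible with differentials; in particular we get $E_2^{10}(X)\to E_2^{10}(X_0)$. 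In the even case ($X_0$ smooth proper over $k$, characteristic zero) degeneration at $E_1$ gives $E_2^{10}(X_0) = E_1^{10}(X_0) = H^0(X_0,\Om^1_{X_0})$. So it remains to show the map $E_2^{10}(X)\to H^0(\Om^1_{X_0})$ is injective.

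The main obstacle is controlling this last injectivity, and this is where I expect the real work to lie. The natural approach is to show that a global $1$-form $\eta$ on $X$ with $d\eta = 0$ whose restriction to $X_0$ is exact (equal to $df_0$ for some $f_0\in H^0(\OO_{X_0})$) is already exact on $X$, i.e. $\eta = df$ for a global $f\in H^0(\OO_X)$. Lifting $f_0$ arbitrarily to $f_1\in H^0(\OO_X)$ (which is possible since $\OO_X\to\OO_{X_0}$ is surjective on global sections — $X$ is proper, but in fact the surjection $\OO_X\twoheadrightarrow\OO_{X_0}$ has kernel $\NN$, and the relevant cohomology sequence — or simply the fact that $H^0(\OO_X)\to H^0(\OO_{X_0})$ is surjective, which holds because $X$ has the same topological space as $X_0$ and $\OO_X\to\OO_{X_0}$ is a surjection of sheaves with $R\Gamma$ comparison) reduces us to: $\eta - df_1$ is a closed global $1$-form lying in $K^1 = F^1_\NN \cap \Om^1_X$, the part built from odd functions. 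Since $K$ is acyclic as a complex of sheaves (Theorem \ref{deR-thm}), one wants to conclude $\eta - df_1$ is a coboundary $d g$ with $g\in H^0(K^0)$ — but acyclicity of $K$ as a complex of \emph{sheaves} only gives this after passing to hypercohomology, i.e. it gives $\bH^\bullet(X,K) = 0$, and one must run the hypercohomology spectral sequence for $K$ to extract the statement about global sections and closed global $1$-forms in low degree. Concretely: $\bH^1(X,K) = 0$ forces the edge-map $H^0(X, \ker(d\colon K^1\to K^2)) / d H^0(K^0) \to \bH^1(X,K)$ together with the contribution from $H^1(X,K^0)$ to vanish appropriately; since $K^0 = \NN$ and $H^1(X,\NN)$ need not vanish, a small diagram chase in the double complex computing $\bH^\bullet(X,K)$ is needed to show that a \emph{closed} global section of $K^1$ coming from a closed global section of $\Om^1_X$ is in the image of $d$ from global sections. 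I would organize this via the short exact sequences $0\to F^{i+1}_\NN \to F^i_\NN\to \gr^i_F\to 0$ and the fact (from the proof of Theorem \ref{deR-thm}) that each $\gr^i_F$ is not merely exact but sheaf-theoretically contractible, so that the needed vanishing propagates to the level of global sections in degree $\le 1$. Granting this, $\eta = d(f_1 + g)$ with $f_1 + g\in H^0(\OO_X)$, so $\eta$ represents $0$ in $E_2^{10}(X)$, proving injectivity of \eqref{E2-10-inj}.
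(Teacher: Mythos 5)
Your argument is correct in substance but takes a genuinely different (and considerably longer) route than the paper. The paper simply observes that the spectral sequence degenerates at position $(1,0)$, so $E_2^{10}(X)=E_\infty^{10}(X)$ is the smallest filtration step of $H^1_{dR}(X)\simeq H^1_{dR}(X_0)$, and this embedding visibly factors as $E_2^{10}(X)\to H^0(\Om^1_{X_0})\to H^1_{dR}(X_0)$ (restrict a closed global $1$-form to $X_0$, then take its de Rham class); injectivity of the composite forces injectivity of the first factor. This works in any characteristic and uses nothing beyond Theorem \ref{deR-thm}. Your route --- functoriality of the spectral sequence under $\Om^\bullet_X\to\Om^\bullet_{X_0}$ plus a direct proof that a closed global $1$-form on $X$ restricting to an exact form on $X_0$ is exact on $X$ --- does work, but three points deserve comment. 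First, the step you flag as ``the real work'' is actually immediate: since $K$ is exact as a complex of sheaves in degrees $0$ and $1$, the differential $d\colon K^0=\NN\to\ker(d\colon K^1\to K^2)$ is an isomorphism of sheaves, hence induces an isomorphism on $H^0(X,-)$; no hypercohomology double complex is needed, $H^1(X,\NN)$ never enters, and you do not need contractibility of the $\gr^i_F$ to propagate anything to global sections. Second, your justification for lifting $f_0$ to $H^0(\OO_X)$ is not valid as stated (surjectivity of a map of sheaves does not give surjectivity on global sections), but the step is unnecessary: for $X_0$ smooth and proper, $H^0(\OO_{X_0})$ is a product of finite separable extensions of $k$, so $dH^0(\OO_{X_0})=0$ and the hypothesis that $\eta|_{X_0}$ is exact already forces $\eta|_{X_0}=0$. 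Third, identifying $E_2^{10}(X_0)$ with $H^0(\Om^1_{X_0})$ via Hodge--de Rham degeneration restricts you to characteristic zero; it suffices (and is characteristic-free) to use that $E_2^{10}(X_0)$ is a subspace of $H^0(\Om^1_{X_0})$, because the incoming differential $d_1$ at position $(1,0)$ vanishes by the previous remark.
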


\begin{proof}
The first assertion is clear since the spectral sequence degenerates at $E_2^{00}$. 
It also degenerates at $E_2^{10}$, so we have an embedding $E_2^{10}(X)\to H^1_{dR}(X)\simeq H^1_{dR}(X_0)$.
This map factors as the composition
$$E_2^{10}(X)\to H^0(\Om^1_{X_0})\to H^1_{dR}(X_0),$$
which implies the claimed injection.
\end{proof}

For any morphism $f:X\to Y$ between smooth proper varieties, the pull-back gives a morphism of spectral sequences
$$f^*:E_n^{pq}(Y)\to E_n^{pq}(X).$$
Applying this to the embedding $i:X_0\to X$, we get a morphism of spectral sequences
$$i^*:E_n^{pq}(X)\to E_n^{pq}(X_0)$$
A curious fact is that although the morphism $H_{dR}(X)\to H_{dR}(X_0)$ is always an isomorphism, if $X$ is not projected,
the morphism $E_\infty^{pq}(X)\to E_{\infty}^{pq}(X_0)$ is not necessarily an isomorphism (for example, this happens for $X=G(1|1,2|2)$, see Example \ref{Gr-ex} below).

The next result shows that the distribution of dimensions of $E_\infty^{pq}(X)$ for constant $p+q$, is skewed towards smaller $p$ compare to $E_\infty^{pq}(X_0)$.

\begin{prop} For every $m\ge 0$, $p\ge 0$,
$$\sum_{i\ge p} \dim E_{\infty}^{i,m-i}(X)\le \sum_{i\ge p} h^{i,m-i}(X_0)$$
(and this becomes an equality for $p=0$). Furthermore, for every $m$, the natural map
$$E_{\infty}^{0,m}(X)\to E_\infty^{0,m}(X_0)$$
is surjective.
\end{prop}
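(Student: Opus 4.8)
The plan is to identify both sides of the inequality with dimensions of steps of the Hodge (stupid) filtration on de Rham cohomology, and then to feed in the comparison isomorphism of Theorem~\ref{deR-thm}. First I would recall that $E_\bullet^{pq}(X)$ is the spectral sequence of the filtered complex $(\Om_X^\bullet,\sigma^{\ge\bullet})$, where $\sigma^{\ge p}$ denotes the stupid truncation. Since $\sigma^{\ge p}\Om_X^\bullet$ is concentrated in degrees $\ge p$, one has $\bH^m(X,\sigma^{\ge p}\Om_X^\bullet)=0$ for $m<p$, so the abutment filtration $F^pH^m_{dR}(X):=\im\bigl(\bH^m(X,\sigma^{\ge p}\Om_X^\bullet)\to H^m_{dR}(X)\bigr)$ is finite: $0=F^{m+1}\subseteq\dots\subseteq F^0=H^m_{dR}(X)$. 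The spectral sequence converges (for fixed $m$ only the finitely many bidegrees $(p,q)$ with $p+q=m$, $p,q\ge 0$ occur), and $\gr^p_F H^m_{dR}(X)\simeq E_\infty^{p,m-p}(X)$; hence $\sum_{i\ge p}\dim E_\infty^{i,m-i}(X)=\dim F^pH^m_{dR}(X)$, and likewise for $X_0$.

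Next I would use that the canonical surjection $\Om_X^\bullet\to\Om_{X_0}^\bullet$ is a morphism of filtered complexes for $\sigma^{\ge\bullet}$, inducing by Theorem~\ref{deR-thm} the isomorphism $\phi\colon H^m_{dR}(X)\rTo{\sim}H^m_{dR}(X_0)$ (this is the map $i^*$ attached to $i\colon X_0\hookrightarrow X$). Being a filtered map, $\phi$ sends $F^pH^m_{dR}(X)$ into $F^pH^m_{dR}(X_0)$, and being injective it embeds $F^pH^m_{dR}(X)$ into $F^pH^m_{dR}(X_0)$. Therefore
\[
\sum_{i\ge p}\dim E_\infty^{i,m-i}(X)=\dim F^pH^m_{dR}(X)\le\dim F^pH^m_{dR}(X_0)\le\sum_{i\ge p}h^{i,m-i}(X_0),
\]
the last inequality because $E_\infty^{i,m-i}(X_0)$ is a subquotient of $E_1^{i,m-i}(X_0)$. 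For $p=0$ we have $F^0=H^m_{dR}$ throughout, so the displayed chain reads $\dim H^m_{dR}(X)\le\dim H^m_{dR}(X_0)\le\sum_ih^{i,m-i}(X_0)$; the first is an equality since $\phi$ is an isomorphism, and the second is an equality because the classical Hodge to de Rham spectral sequence of the smooth proper variety $X_0$ degenerates at $E_1$. Hence the inequality becomes an equality at $p=0$.

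Finally, for the surjectivity statement I would note that $E_\infty^{0,m}(X)=H^m_{dR}(X)/F^1H^m_{dR}(X)$, similarly for $X_0$, and that the map $E_\infty^{0,m}(X)\to E_\infty^{0,m}(X_0)$ is precisely the one induced by $\phi$ on these quotients; since $\phi$ is surjective and carries $F^1H^m_{dR}(X)$ into $F^1H^m_{dR}(X_0)$, the induced map on quotients is surjective. The only point to watch is that $\Om_X^\bullet\to\Om_{X_0}^\bullet$, although a quasi-isomorphism by Theorem~\ref{deR-thm}, is \emph{not} a filtered quasi-isomorphism (indeed the pages $E_\infty(X)$ and $E_\infty(X_0)$ genuinely differ in general, e.g.\ for $G(1|1,2|2)$), so one may compare only the abutment filtrations under $\phi$ and not the $E_\infty$-pages themselves; that comparison is all the argument uses, so I expect no real obstacle beyond keeping this distinction straight.
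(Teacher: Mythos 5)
Your argument is correct and is essentially the paper's proof, just with the standard convergence facts spelled out: the paper likewise deduces the inequality from $F_p(X)\subseteq F_p(X_0)$ under the comparison isomorphism of Theorem~\ref{deR-thm}, and identifies the map $E_\infty^{0,m}(X)\to E_\infty^{0,m}(X_0)$ with the induced surjection $H^m_{dR}(X)/F_1\to H^m_{dR}(X_0)/F_1$. Your explicit remark that the quotient map is filtered but not a filtered quasi-isomorphism, and that the $p=0$ equality uses $E_1$-degeneration for $X_0$ in characteristic zero, is a correct reading of what the paper leaves implicit.
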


\begin{proof}
The first inequality follows immediately from the fact that the filtration $(F_p(X))$ on $H^n_{dR}(X)$ induced by the Hodge to de Rham spectral sequence satisfies
$$F_p(X)\sub F_p(X_0).$$
The second assertion also follows, as we can identify this map with
$$H^m_{dR}(X)/F_1H^m_{dR}(X)\to H^m_{dR}(X_0)/F_1H^m_{dR}(X_0).$$
\end{proof}

\begin{cor}\label{surj-Hi-cor} 
Assume the characteristic is zero. Then for every $i\ge 0$, the composition
$$E_\infty^{0,i}(X)\to H^i(X,\OO_X)\to H^i(X_0,\OO_{X_0})$$
is surjective.
\end{cor}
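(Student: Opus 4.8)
The plan is to combine the previous proposition with Theorem \ref{deR-thm} (or rather with the discussion of relative de Rham cohomology when the characteristic is zero). First I would recall that the previous proposition already establishes surjectivity of the natural map $E_\infty^{0,i}(X)\to E_\infty^{0,i}(X_0)$. So the only thing that remains is to identify, for both $X$ and $X_0$, the edge term $E_\infty^{0,i}$ with the whole of $H^i(X,\OO_X)$, respectively $H^i(X_0,\OO_{X_0})$; once that is done the composition in the statement is literally the surjection already proved.

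The key point, then, is that in characteristic zero the spectral sequence degenerates along the bottom row at $E_1$, i.e.\ the differentials leaving and entering $E_r^{0,i}$ vanish for all $r\ge 1$, so that $E_\infty^{0,i}(X)=E_1^{0,i}(X)=H^i(X,\OO_X)$. To see this I would argue that the composite $E_1^{0,i}(X)=H^i(X,\OO_X)\twoheadrightarrow H^i(X_0,\OO_{X_0})=E_1^{0,i}(X_0)$, induced by $i^*$, is compatible with all the higher differentials, and on $X_0$ the classical Hodge-theoretic degeneration forces $d_r=0$ on the bottom row. More robustly, one can use the Gauss--Manin/Lie-derivative argument already used above: the de Rham complex $\Om_X^\bullet$ carries the filtration by ``number of $d$'s'' and the relevant $d_1:H^i(\OO_X)\to H^i(\Om^1_X)$ is, up to the identification via Theorem \ref{deR-thm}, the map induced by $d:\OO_X\to\Om^1_X$; in characteristic zero its kernel on $H^i$ is everything because $H^i(X,\OO_X)\to H^i(X_0,\OO_{X_0})$ is surjective and the latter is killed by $d_1$ by the classical result, while the kernel $\NN$-part is handled exactly by the acyclicity statement of Theorem \ref{deR-thm}. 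Actually the cleanest route is: the edge map $E_\infty^{0,i}(X)\hookrightarrow H^i_{dR}(X)=H^i_{dR}(X_0)$ has image $F_0/F_1=E_\infty^{0,i}(X_0)$-\,no, rather its image is contained in $F_0 H^i_{dR}(X_0)/F_1H^i_{dR}(X_0)$, and I would invoke that $E_1^{0,i}(X)=H^i(\OO_X)$ surjects onto $H^i(\OO_{X_0})=E_\infty^{0,i}(X_0)$, so the composite $E_\infty^{0,i}(X)\to H^i(\OO_X)\to H^i(\OO_{X_0})$, being the same surjection, is onto.

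Concretely I would write: by the previous proposition the map $E_\infty^{0,i}(X)\to E_\infty^{0,i}(X_0)$ is surjective. In characteristic zero the classical degeneration of the Hodge to de Rham spectral sequence for $X_0$ gives $E_\infty^{0,i}(X_0)=E_1^{0,i}(X_0)=H^i(X_0,\OO_{X_0})$. Moreover the natural map $E_\infty^{0,i}(X)\to E_1^{0,i}(X)=H^i(X,\OO_X)$ need not be injective, but the composite $E_\infty^{0,i}(X)\to H^i(X,\OO_X)\to H^i(X_0,\OO_{X_0})$ coincides, under these identifications, with the surjective map $E_\infty^{0,i}(X)\to E_\infty^{0,i}(X_0)$, because the projection $H^i(\OO_X)\to H^i(\OO_{X_0})$ is exactly $i^*$ on $E_1^{0,i}$ and $i^*$ commutes with passing to $E_\infty$. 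Hence the composition in the statement is surjective.

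The main obstacle is the bookkeeping needed to make the previous sentence precise: one must check that the edge homomorphism $E_\infty^{0,i}(X)\to E_1^{0,i}(X)$ followed by $i^*:E_1^{0,i}(X)\to E_1^{0,i}(X_0)$ really equals $i^*:E_\infty^{0,i}(X)\to E_\infty^{0,i}(X_0)$ composed with the (identity) inclusion $E_\infty^{0,i}(X_0)=E_1^{0,i}(X_0)$. This is a diagram chase in the category of spectral sequences: $i^*$ is a morphism of spectral sequences, so it commutes with all the structure maps; on $X_0$ in characteristic zero the bottom edge already stabilizes at $E_1$, so $E_\infty^{0,i}(X_0)\to E_1^{0,i}(X_0)$ is an isomorphism, and chasing the resulting commuting square gives the claim. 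Everything else — the surjectivity input and the classical degeneration — is quoted from earlier results, so no further work is required.
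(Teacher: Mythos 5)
Your final ``Concretely I would write'' paragraph is the correct argument, and it is exactly the one the paper intends (the corollary is stated without proof because it is immediate from the preceding proposition): the map $E_\infty^{0,i}(X)\to E_\infty^{0,i}(X_0)$ is surjective by the proposition, classical Hodge to de Rham degeneration gives $E_\infty^{0,i}(X_0)=E_1^{0,i}(X_0)=H^i(X_0,\OO_{X_0})$, and since $i^*$ is a morphism of spectral sequences the composition in the statement coincides with $E_\infty^{0,i}(X)\to E_\infty^{0,i}(X_0)\hra E_1^{0,i}(X_0)$.

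However, two assertions in the middle of your write-up are false and must be removed, since as written they contradict both the paper and each other. First, the claimed ``key point'' that the differentials leaving $E_r^{0,i}(X)$ vanish, so that $E_\infty^{0,i}(X)=E_1^{0,i}(X)=H^i(X,\OO_X)$: this is precisely what fails for supervarieties (the failure of $E_1$-degeneration is the subject of the paper). For instance, for split $X$ with $\OO_X=\bigwedge^\bullet E$ one has $H^i(X,\OO_X)=\bigoplus_j H^i(X_0,\bigwedge^j E)$, while Theorem \ref{split-thm} gives $E_2^{0,i}(X)\simeq H^i(X_0,\OO_{X_0})$, so $d_1$ on $E_1^{0,i}(X)$ is typically nonzero. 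The corollary needs degeneration only for $X_0$, not for $X$. Second, the parenthetical claim that $E_\infty^{0,i}(X)\to E_1^{0,i}(X)$ ``need not be injective'' has it backwards: on the column $p=0$ there are no nonzero incoming differentials, so $E_{r+1}^{0,i}=\ker(d_r)\sub E_r^{0,i}$ and the map to $E_1^{0,i}(X)=H^i(X,\OO_X)$ is always injective --- it is surjectivity that fails. Finally, the sentence where you ``invoke that $H^i(\OO_X)$ surjects onto $H^i(\OO_{X_0})$'' is circular: that surjectivity is a consequence of the corollary (and is the super analogue of Theorem \ref{nilp-sur-thm}), not an available input. None of these defects affects your last paragraph, which stands on its own.
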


\begin{remark}
Similarly, in characteristic zero, for any $i\ge 0$ and any $m\ge 0$, the natural map of hypercohomology of truncated de Rham complexes,
$$H^i(X,[\OO_X\to \Om_X^1\to\ldots \to \Om_X^m])\to H^i(X_0,[\OO_{X_0}\to \Om_{X_0}^1\to\ldots \to \Om_{X_0}^m])$$
is surjective.
\end{remark}


\begin{prop}
Assume that $X$ is projected. Then for every $i\ge 1$, the natural maps $E_i^{pq}(X)\to H^q(\Om^p_{X_0})$ are surjective,
and they induce isomorphisms
$$E_\infty^{pq}(X)\rTo{\sim} H^q(\Om^p(X_0)).$$
In particular, $E_2^{10}(X)\simeq H^0(\Om^1_{X_0})$.
\end{prop}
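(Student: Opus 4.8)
The plan is to use the retraction provided by the hypothesis to split the de Rham complex of $X$ as a \emph{filtered} complex, and then to feed the acyclicity statement of Theorem \ref{deR-thm} into the resulting decomposition of spectral sequences.

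Since $X$ is projected, I would fix a morphism $p:X\to X_0$ with $p\circ i=\id_{X_0}$, where $i:X_0\to X$ is the canonical embedding. Pulling back differential forms yields morphisms of complexes of sheaves of $k$-algebras on the common underlying space,
$$i^*:\Om^\bullet_X\to \Om^\bullet_{X_0}, \qquad p^*:\Om^\bullet_{X_0}\to \Om^\bullet_X,$$
both respecting the grading by form degree, with $i^*\circ p^*=(p\circ i)^*=\id$. Hence $e:=p^*\circ i^*$ is an idempotent endomorphism of $\Om^\bullet_X$ that preserves the form-degree filtration, and it gives a direct sum decomposition of filtered complexes $\Om^\bullet_X\simeq p^*\Om^\bullet_{X_0}\oplus K$, with $K:=\ker(i^*)$ and $p^*\Om^\bullet_{X_0}\simeq \Om^\bullet_{X_0}$. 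The point is that $i^*$ is exactly the natural surjection $\Om^\bullet_X\to \Om^\bullet_{X_0}$ of Theorem \ref{deR-thm}, so $K$ is the complex named $K$ there, which is acyclic.

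The Hodge to de Rham spectral sequence is the spectral sequence of the form-degree filtration, and since that filtration splits along the decomposition above, so does the spectral sequence: for every $r$, including $r=\infty$,
$$E_r^{pq}(X)\simeq E_r^{pq}(X_0)\oplus E_r^{pq}(K),$$
where $E_\bullet(X_0)$ is the Hodge to de Rham spectral sequence of $X_0$, $E_\bullet(K)$ is the spectral sequence of the filtered complex $K$, and under this identification the morphism of spectral sequences $i^*$ is the projection onto the first factor; in particular $i^*$ is surjective on every page. Now $K$ acyclic gives $H^\bullet(X,K)=0$, and since the induced filtration on $H^n(X,K)$ is finite (its $p$-th step vanishes for $p>n$), the spectral sequence of $K$ converges and $E_\infty^{pq}(K)=0$ for all $p,q$; therefore $i^*:E_\infty^{pq}(X)\rTo{\sim} E_\infty^{pq}(X_0)$. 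Finally, since $X_0$ is smooth and proper, its own Hodge to de Rham spectral sequence degenerates at $E_1$ (Hodge theory, in characteristic zero), so $E_r^{pq}(X_0)=H^q(\Om^p_{X_0})$ for all $r\ge 1$; together with the splitting this yields the surjections $E_r^{pq}(X)\to H^q(\Om^p_{X_0})$ and the isomorphism $E_\infty^{pq}(X)\simeq H^q(\Om^p_{X_0})$. Specializing to $(p,q)=(1,0)$, $r=2$, and using that the sequence already degenerates at $E_2^{10}$ (by the earlier proposition), gives $E_2^{10}(X)\simeq H^0(\Om^1_{X_0})$.

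I do not expect a serious obstacle; the two points to get right are that the idempotent $e$ genuinely respects the form-degree filtration — so that the splitting passes to the whole spectral sequence and not merely to its $E_1$ page — and the convergence input needed to conclude $E_\infty(K)=0$ from $H^\bullet(X,K)=0$. The conceptual heart of the matter is simply that for a projected $X$ the kernel of $\Om^\bullet_X\to\Om^\bullet_{X_0}$ is a filtered direct summand whose hypercohomology vanishes by Theorem \ref{deR-thm}.
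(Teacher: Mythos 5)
Your proof is correct, and it shares its two essential inputs with the paper's argument (the retraction $p$ and Theorem \ref{deR-thm}), but it packages them differently. For the surjectivity the content is the same: the paper just notes that $i^*\circ p^*=\id$ gives a retraction of spectral sequences $E_n^{pq}(X_0)\to E_n^{pq}(X)\to E_n^{pq}(X_0)$, so $i^*$ surjects onto $E_n^{pq}(X_0)=H^q(\Om^p_{X_0})$ (degeneration for $X_0$ in characteristic zero). The divergence is in the $E_\infty$ step. The paper deduces the isomorphism from a dimension count: by Theorem \ref{deR-thm} one has $\dim E_\infty(X)=\dim H_{dR}(X)=\dim H_{dR}(X_0)=\sum_{p,q}h^{pq}(X_0)$, so the surjection $E_\infty^{pq}(X)\to E_\infty^{pq}(X_0)$ between spaces of equal total (finite) dimension is forced to be an isomorphism. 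You instead split the filtered complex itself via the idempotent $p^*i^*$ as $p^*\Om^\bullet_{X_0}\oplus K$ and kill $E_\infty(K)$ using the acyclicity of $K$ from Theorem \ref{deR-thm}. Your route is more structural: it gives the finer statement $E_r^{pq}(X)\simeq H^q(\Om^p_{X_0})\oplus E_r^{pq}(K)$ on every page and avoids the global count, at the price of having to address convergence of the spectral sequence of the unbounded complex $K$ --- which you do adequately (in each total degree only the columns with $0\le q\le\dim X_0$ contribute, so the sequence stabilizes pointwise and converges, exactly as for $\Om^\bullet_X$ itself); the paper's counting argument sidesteps this but relies on finite-dimensionality coming from properness. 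Both are valid; yours is marginally more informative, the paper's is shorter.
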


\begin{proof} Let $i:X_0\to X$ denote the embedding and let $p:X\to X_0$ be a projection (so $p\circ i=\id_{X_0}$). Then we have 
morphisms of spectral sequences
$$E_n^{pq}(X_0)\rTo{p^*} E_n^{pq}(X)\rTo{i^*} E^n_{pq}(X_0),$$
with the composition equal to the identity.
The surjectivity immediately follows from this. The fact that we get an isomorphism on the infinite page
follows from the equality of dimensions $\dim E_\infty(X)=\sum_{pq} h^q(\Om^p_{X_0})$.
\end{proof}


\begin{prop}
(i) Let $L$ be an even line bundle on $X$. Then there exists a morphism of spectral sequences
$$c_1(L)\cup\cdot: E_n^{pq}(X)\to E_n^{p+1,q+1}(X),$$
such that the map on $E_1$ is given by the cup product with $c_1(L)\in H^1(\Om^1_X)$, compatible
with the cup product by $c_1(L)_{dR}\in H^2_{dR}(X)$ on the limit $H^\bullet_{dR}(X)$.
 
\noindent
(ii) Assume in addition that the characteristic is zero and $X$ admits an ample line bundle. 
Let $d=\dim X_0$. Then for any $i\ge j\ge 1$, and any $n\ge 1$, we have for $p+q=d-i$,
$$\rk(E_n^{pq}\to H^q(\Om^p_{X_0}))\le \rk(E_n^{p+j,q+j}\to H^{q+j}(\Om^{p+j}_{X_0})).$$
\end{prop}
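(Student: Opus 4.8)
The plan is to prove (i) by building the cup-product operator at the cochain level on the \v{C}ech--de Rham bicomplex, and then to deduce (ii) from that construction applied to the closed embedding $i\colon X_0\hra X$, together with hard Lefschetz on $X_0$.

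For (i), I would fix a finite affine cover $\UU=(U_\a)$ of $X$ — legitimate because $X$ and $X_0$ have the same underlying space and quasi-coherent sheaves on affine superschemes are $\Gamma$-acyclic — and work with the total complex $C=\tot C^\bullet(\UU,\Om_X^\bullet)$, which computes $H^\bullet_{dR}(X)$, the filtration by $\Om^{\ge p}_X$ inducing the spectral sequence $E_n^{pq}(X)$. The combined \v{C}ech and wedge products make $C$ a differential graded supercommutative algebra (with the de Rham differential even, per our conventions). Choosing transition functions $g_{\a\b}\in\Gamma(U_{\a\b},\OO_X^\times)$ for the even line bundle $L$, the cochain $\om_L=(g_{\a\b}^{-1}dg_{\a\b})$ has \v{C}ech degree $1$ and de Rham degree $1$ and, by the standard computation (the same as in the even case), is a cocycle of total degree $2$ in $C$ representing $c_1(L)\in H^1(X,\Om^1_X)$ as well as $c_1(L)_{dR}\in H^2_{dR}(X)$. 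Left multiplication by $\om_L$ is then an even cochain endomorphism of $C$ taking $\Om^{\ge p}_X$ into $\Om^{\ge p+1}_X$, hence a morphism of spectral sequences $E_n^{pq}(X)\to E_n^{p+1,q+1}(X)$ for every $n$, which on $E_1^{pq}=H^q(\Om^p_X)$ equals cup product with $c_1(L)$ and on the abutment equals cup product with $c_1(L)_{dR}$. Independence of the cover and of the transition functions is the usual homotopy argument: a change of the $g_{\a\b}$ modifies $\om_L$ by $D\eta$ with $\eta$ of de Rham degree $1$, so $a\mapsto\pm a\cup\eta$ is a homotopy respecting the filtration shift, and two covers are compared through a common refinement.

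For (ii), let $L$ be an ample line bundle on $X$, which we may assume even (replacing it by $L^{\ot 2}$ otherwise), and put $L_0=i^*L$, $\ell=c_1(L_0)\in H^1(\Om^1_{X_0})$. Then $L_0$ is ample, so $X_0$ is smooth projective; in characteristic zero its Hodge to de Rham spectral sequence degenerates at $E_1$, so $E_n^{pq}(X_0)=H^q(\Om^p_{X_0})$ for all $n\ge 1$ and the operator of (i) on $E_n(X_0)$ is cup product with $\ell$. Since the \v{C}ech--de Rham construction is functorial, pullback along $i$ sends $\om_L$ to $\om_{L_0}$, so the morphism of spectral sequences $i^*\colon E_n^{pq}(X)\to E_n^{pq}(X_0)$ satisfies $i^*\circ(c_1(L)^{j}\cup-)=(\ell^{j}\cup-)\circ i^*$ on every page. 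Hence, writing $I_n^{pq}=\im\bigl(i^*\colon E_n^{pq}(X)\to H^q(\Om^p_{X_0})\bigr)$, we obtain $\ell^{j}\cup I_n^{pq}\sub I_n^{p+j,q+j}$. The remaining ingredient is hard Lefschetz for $X_0$ in its bigraded form: over a field of characteristic zero, $\ell^{j}\cup(-)\colon H^q(\Om^p_{X_0})\to H^{q+j}(\Om^{p+j}_{X_0})$ is injective whenever $p+q\le d-j$. Since $p+q=d-i$ with $i\ge j$, this applies, so $\ell^{j}\cup(-)$ is injective on $H^q(\Om^p_{X_0})$, hence also on $I_n^{pq}$, and therefore $\rk\bigl(E_n^{pq}(X)\to H^q(\Om^p_{X_0})\bigr)=\dim I_n^{pq}=\dim(\ell^{j}\cup I_n^{pq})\le\dim I_n^{p+j,q+j}=\rk\bigl(E_n^{p+j,q+j}(X)\to H^{q+j}(\Om^{p+j}_{X_0})\bigr)$, as claimed.

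The only substantive input is hard Lefschetz on the underlying variety $X_0$; the rest is formal bookkeeping with the differential graded algebra structure on the \v{C}ech--de Rham complex and its naturality in $X$. The point demanding care is that left multiplication by $\om_L$, and its compatibility with $i^*$, genuinely preserve the Hodge filtration and are well defined up to filtered homotopy, so that they descend to \emph{morphisms of spectral sequences} rather than only to maps on $E_1$ and on the abutment; for the limit page $n=\infty$ one can argue alternatively via strict compatibility of $\ell^{j}\cup(-)$ with the Hodge filtration of $X_0$ together with the inclusion $F_p(X)\sub F_p(X_0)$ established earlier in this section.
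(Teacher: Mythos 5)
Your proof is correct and follows essentially the same route as the paper: part (i) is realized by multiplying the \v{C}ech--de Rham total complex by the degree-$(1,1)$ cocycle $d\log(g_{\a\b})$ (a cocycle with values in closed $1$-forms, hence compatible with the filtration shift and the total differential), and part (ii) combines the resulting compatibility of $c_1(L)^j\cup(-)$ with $i^*\colon E_n(X)\to E_n(X_0)$ with the injectivity of the Lefschetz operator $\ell^j\colon H^q(\Om^p_{X_0})\to H^{q+j}(\Om^{p+j}_{X_0})$ in degrees $p+q\le d-j$. The paper's proof is just a terser version of the same argument.
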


\begin{proof}
(i) One can realize $c_1(L)$ by a Cech cocycle $d\log(f_{ij})$ with values in $\Om^{1,cl}$ (sheaf of closed $1$-forms), where $(f_{ij})$ are transition functions for $L$,
and use the multiplication by this cocycle on the Cech complex of $\Om^\bullet$.

\noindent
(ii) This follows from (i) together with the fact that the multiplication map
$$c_1(L)^j\cup\cdot: H^q(\Om^p_{X_0})\to H^{q+j}(\Om^{p+j}_{X_0})$$ 
is injective under our assumptions.
\end{proof}


\begin{remark}
It is natural to ask whether the approach of Deligne-Illusie via the reduction to finite characteristic (see \cite{DI}) sheds any light on the Hodge to de Rham spectral
sequence in the super case. Namely, for $X/k$ in characteristic $p$,
the Frobenius kills odd functions, so it can be viewed as a morphism $F:X\to X_0$. Thus, $F_*\Om_X^{\bullet}$ becomes a complex of $\OO_{X_0}$-modules.
Now it is easy to see that the analog of the Cartier isomorphism (given by the same formulas) is
$$C^{-1}:\Om^i_{X_0}\rTo{\sim} \und{H}^i(F_*\Om_X^\bullet).$$
The same method as in \cite{DI} shows that if the characteristic is $>\dim X_0$ and 
$X$ lifts to characterstic zero, then the second spectral sequence for the hypercohomology of $F_*\Om_X^\bullet$ 
(starting from $E_2=\bigoplus H^q(\und{H}^i(F_*\Om_X^\bullet))$) degenerates at $E_2$. This agrees with Theorem \ref{deR-thm} but does not
give any additional information on the Hodge to de Rham spectral sequence for $X$ (because in the Cartier isomorphism $\Om^i_X$ gets replaced by $\Om^i_{X_0}$).
\end{remark}

\subsection{A result on even nilpotent extensions}\label{even-sec}

Corollary \ref{surj-Hi-cor} has the following classical counterpart (for usual schemes), which does not seem to be widely known. 

\begin{theorem}\label{nilp-sur-thm} 
Let $Y$ be a proper scheme over a field $k$ of characteristic zero, and let $Y_0$ be the corresponding reduced scheme.
Assume that $Y_0$ is smooth over $k$. Then for any $i$ the natural map $H^i(Y,\OO_Y)\to H^i(Y_0,\OO_{Y_0})$ is surjective.
\end{theorem}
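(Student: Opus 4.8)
The plan is to use a filtration argument on the structure sheaf analogous to the one used in the proof of Theorem \ref{deR-thm}, combined with a degeneration statement that provides the needed surjectivity. Let $\NN\sub\OO_Y$ be the nilradical, so $\OO_{Y_0}=\OO_Y/\NN$, and consider the $\NN$-adic filtration $\OO_Y=\NN^0\supset\NN\supset\NN^2\supset\cdots\supset\NN^N=0$. Since $Y_0$ is smooth, the conormal sheaf $\NN/\NN^2$ and all the quotients $\NN^j/\NN^{j+1}$ are coherent sheaves on $Y_0$; moreover, in characteristic zero one can hope to split the truncation $\OO_Y/\NN^2$ infinitesimally, but in general $Y$ need not be split, so a direct splitting is unavailable. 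Instead I would argue by descending induction on $j$ that $H^i(Y,\OO_Y)\to H^i(\NN^0/\NN^j,\cdot)$ is surjective, the base case $j=1$ being trivial; the inductive step requires showing that the connecting map $H^i(Y_{j-1},\OO)\to H^{i+1}(Y, \NN^{j-1}/\NN^{j})$ vanishes, i.e.\ that the extension $0\to\NN^{j-1}/\NN^j\to\OO_Y/\NN^j\to\OO_Y/\NN^{j-1}\to 0$ induces a surjection on $H^i$.

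The key geometric input, and the reason characteristic zero is needed, is the same mechanism that makes Corollary \ref{surj-Hi-cor} work: one can build a de Rham–type complex attached to $Y$ and use that its hypercohomology equals $H^\bullet_{dR}(Y_0)$, which forces the relevant connecting maps to die. Concretely, I would mimic the proof of Theorem \ref{deR-thm}: introduce on $\Om_Y^\bullet$ the decreasing filtration $F^\bullet_\NN$ generated by $(\NN^i,\NN^{i-1}d\NN,\ldots,(d\NN)^i)$, observe that $\gr^i_F$ is exact for $i\ge 1$ (by the Koszul-type argument with $K_{\ge 1}(\NN/\NN^2)$, which is valid since $Y_0$ is smooth — this is exactly where smoothness of $Y_0$ enters, guaranteeing $\NN/\NN^2$ is locally free and the Koszul complex behaves), and conclude that the inclusion $F^1_\NN\hra\Om_Y^\bullet$ is a quasi-isomorphism onto a subcomplex with acyclic quotient $\Om_{Y_0}^\bullet$. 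Then the truncated complexes $[\OO_Y\to\Om^1_Y\to\cdots]$ map to $[\OO_{Y_0}\to\Om^1_{Y_0}\to\cdots]$ with acyclic kernel in each degree, so they have isomorphic hypercohomology; comparing the Hodge filtrations on these hypercohomologies, exactly as in the proof of the final Proposition before Corollary \ref{surj-Hi-cor}, gives that $H^i(Y,\OO_Y)\to H^i(Y_0,\OO_{Y_0})$ is identified with $\gr^0_{\text{Hodge}}H^i_{dR}$ of source and target, and the source surjects onto the target because the Hodge filtration on $H^i_{dR}(Y)$ is contained in the one pulled back from $H^i_{dR}(Y_0)$.

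Alternatively — and this may be cleaner to write — one reduces immediately to the first-order case $\NN^2=0$ by the induction above, where $Y$ is a square-zero extension of $Y_0$ by a coherent sheaf $\EE=\NN$ on $Y_0$; then the obstruction to lifting a class in $H^i(Y_0,\OO_{Y_0})$ lies in $H^{i+1}(Y_0,\EE)$ and is given by cup product with the Kodaira–Spencer/Atiyah-type class of the extension $\OO_Y$ composed through the de Rham differential, and the point is that this composite factors through $H^{i+1}$ of the \emph{de Rham} complex of $Y_0$ in a way that the comparison isomorphism $H^\bullet_{dR}(Y)\simeq H^\bullet_{dR}(Y_0)$ forces to vanish. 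The main obstacle I anticipate is the inductive bookkeeping: making precise that at each stage $\NN^{j-1}/\NN^j$ together with its de Rham differential into $\Om^1$ assembles into a complex whose acyclicity (guaranteed by the Koszul computation, using smoothness of $Y_0$) kills the connecting homomorphism — one must be careful that the differential $d:\NN^{j-1}/\NN^j\to\Om^1_Y/(\text{lower terms})$ is $\OO_{Y_0}$-linear only modulo the filtration, so the correct statement is about the associated graded complex $\AA$ and its sub-filtration $\AA_n$, precisely as organized in the proof of Theorem \ref{deR-thm}. Once that structure is in place, the characteristic-zero hypothesis is used only to know the Hodge-to-de-Rham spectral sequence of $Y_0$ degenerates at $E_1$, so that the graded pieces of the Hodge filtration on $H^i_{dR}(Y_0)$ are literally $H^{i-p}(\Om^p_{Y_0})$ and in particular $\gr^0 = H^i(\OO_{Y_0})$.
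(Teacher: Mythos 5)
Your argument has a genuine gap at its central step: the Koszul computation from the proof of Theorem \ref{deR-thm} does not transfer to even nilpotent thickenings. That computation used two facts special to smooth supervarieties: the conormal sheaf $\NN/\NN^2$ is locally free, and $\gr^\bullet_\NN\OO_X\simeq\bigwedge^\bullet(\NN/\NN^2)$ is the \emph{free} graded-commutative algebra on $\NN/\NN^2$, which is what makes $\gr_F\Om^\bullet$ look like $\Om^\bullet_{X_0}\ot K(\NN/\NN^2)$ with $K$ the acyclic Koszul complex. For an arbitrary proper $Y$ with $Y_0$ smooth, neither holds: $\NN/\NN^2$ can be any coherent sheaf (e.g.\ a torsion sheaf, as for $\OO_Y=\OO_{\A^1}[x]/(x^2,xy)$ over $Y_0=\A^1$), and $\gr^\bullet_\NN\OO_Y$ is a proper quotient of $S^\bullet(\NN/\NN^2)$ already for $Y=\Spec k[x]/(x^3)$ over a point. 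So the claimed acyclicity of $\gr^i_F\Om^\bullet_Y$ for $i\ge1$ is unjustified (and false in general), and with it the comparison $H^\bullet_{dR}(Y)\simeq H^\bullet_{dR}(Y_0)$ for the \emph{naive} de Rham complex of $Y$, on which both of your routes rest. Your ``cleaner'' square-zero alternative has the same problem one step later: the first-order case ($\NN^2=0$) does work, because the connecting map $H^i(\OO_{Y_0})\to H^{i+1}(\NN)$ factors through $d_1^{0,i}:H^i(\OO_{Y_0})\to H^i(\Om^1_{Y_0})$, which vanishes by Hodge degeneration for the smooth proper $Y_0$; but at the next stage of the induction the relevant extension has the \emph{non-reduced} scheme $\Spec(\OO_Y/\NN^{j-1})$ as its base, and no degeneration or de Rham comparison is available there, so the connecting maps cannot be killed by this mechanism.

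The paper avoids all of this by replacing the naive de Rham complex of $Y$ with cohomology on the infinitesimal site: one has a commutative square relating $H^i(Y_{inf},\OO_Y)$, $H^i((Y_0)_{inf},\OO_{Y_0})$, $H^i(Y,\OO_Y)$ and $H^i(Y_0,\OO_{Y_0})$; the top arrow is an isomorphism because infinitesimal cohomology is invariant under nilpotent thickenings for formal reasons (no Koszul computation, no hypothesis on $\NN/\NN^2$), and $H^i((Y_0)_{inf},\OO_{Y_0})\simeq H^i_{dR}(Y_0)$ surjects onto $H^i(\OO_{Y_0})$ by Hodge degeneration. Your overall architecture --- manufacture a cohomology theory of $Y$ that simultaneously factors through $H^i(Y,\OO_Y)$ and surjects onto $H^i(\OO_{Y_0})$ via the Hodge theory of $Y_0$ --- is exactly right; the missing idea is that the correct realization of that theory is crystalline/infinitesimal cohomology, not the hypercohomology of $\Om^\bullet_Y$.
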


\begin{proof}
The map in question fits into a commutative square of natural maps
\begin{diagram}
H^i(Y_{inf},\OO_Y)&\rTo{}&H^i((Y_0)_{inf},\OO_{Y_0})\\
\dTo{}&&\dTo{}\\
H^i(Y,\OO_Y)&\rTo{}& H^i(Y_0,\OO_{Y_0})
\end{diagram}
where in the first line we consider cohomology for the infinitesimal sites (see \cite{Gr}).
It is known that the top horizontal arrow is an isomorphism (see \cite[Sec.\ 5.3]{Gr}). Also,
it is well known that $H^i((Y_0)_{inf},\OO_{Y_0})$ is isomorphic to the de Rham cohomology of $Y_0$, so
by the Hodge-to-de Rham degeneration for $Y_0$, the right vertical arrow is surjective.
This implies that the composed map $H^i(Y_{inf},\OO_Y)\to H^i(Y_0,\OO_{Y_0})$ 
 is surjective. Since it factors through $H^i(Y,\OO_Y)$, the assertion follows.
\end{proof}

\section{Some computations}\label{comp-sec}

\subsection{Using a special derivation in the split case and in characterstic $2$}\label{split-sec}

\begin{theorem}\label{split-thm} 
Assume $X$ is split, and the ground field $k$ has characteristic zero. Then the Hodge to de Rham spectral sequence of $X$ degenerates at $E_2$ and
there are natural isomorphisms $E_2^{p,q}(X)\rTo{\sim} H^{p,q}(X_0)$.
\end{theorem}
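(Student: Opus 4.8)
\textbf{Proof strategy for Theorem \ref{split-thm}.}

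The plan is to exploit the split structure directly. Since $X$ is split, we may write $\OO_X \simeq \bigwedge^\bullet \EE^\vee$ for a vector bundle $\EE$ on $X_0$ (the odd cotangent bundle), so that $X$ carries a global $\Z$-grading by the weight with respect to the $\C^*$-action scaling the odd coordinates. This $\Z$-grading extends to the full de Rham complex $\Om_X^\bullet$: assign weight $k$ to a section of $\bigwedge^k\EE^\vee$ and to the odd-coordinate differentials accordingly, so that $d$ preserves the weight. Consequently the entire Hodge to de Rham spectral sequence decomposes as a direct sum over weights $w \ge 0$, $E_n^{pq}(X) = \bigoplus_w E_n^{pq}(X)_w$, and it suffices to analyze each weight piece. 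The weight-$0$ piece is exactly the spectral sequence of $X_0$, which degenerates at $E_1$; so the content is to show that for each $w \ge 1$ the weight-$w$ subcomplex of $\Om_X^\bullet$ — more precisely, the associated $E_1$ page with its differential $d_1$ — has cohomology vanishing after $E_2$, i.e. is itself "exact enough."

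The key step is to produce, for each fixed weight $w \ge 1$, a contracting homotopy on the weight-$w$ part of the de Rham complex that splits off everything except the $E_2$ cohomology. The natural candidate is built from the Euler vector field $\eu$ generating the scaling $\C^*$-action on the odd directions: on $\Om_X^\bullet$ one has the Cartan formula $L_{\eu} = d\, i_{\eu} + i_{\eu}\, d$, and $L_{\eu}$ acts on the weight-$w$ piece as multiplication by a scalar depending on $w$ and the form-degree in the odd directions. Wherever that scalar is invertible, $i_{\eu}/L_{\eu}$ is a contracting homotopy, which forces the corresponding part of the complex to be acyclic. The surviving part — where $L_{\eu}$ acts by zero — is precisely the subcomplex generated by closed odd differentials, and on this part $d$ behaves like a Koszul-type differential tensored with $\Om_{X_0}^\bullet$, whose cohomology is the $E_2$ term $H^{p,q}(X_0)$ tensored with the weight-$w$ piece of $\bigwedge^\bullet\EE^\vee$-data; one then checks the $d_2$ and higher differentials vanish because the remaining complex has no further internal differential. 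Assembling the weight pieces and using properness to identify hypercohomology with the cohomology of the $E_1$ page yields the degeneration at $E_2$ together with the identification $E_2^{p,q}(X) \simeq H^{p,q}(X_0)$ (the weight-$0$ part contributing $H^q(\Om^p_{X_0})$ and the positive-weight parts either cancelling or already being accounted for in the split description of $H^{p,q}(X_0)$).

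The main obstacle I expect is making the homotopy argument interact correctly with the \emph{filtration} defining the Hodge to de Rham spectral sequence, rather than with the naive de Rham complex: the contraction $i_{\eu}$ by the odd Euler field does not preserve the stupid (Hodge) filtration $\sigma^{\ge p}\Om_X^\bullet$, so one cannot directly conclude $E_2$-degeneration from a homotopy on the total complex. The fix — and this is where the "special derivation" in the section title presumably enters — is to work instead with the \emph{induced} differential on the $E_1$ page, observe that the weight decomposition is compatible with the Hodge filtration (each $\Om^p_X$ is weight-graded), and show that within each fixed column $p$ the weight-$w$ part of $H^q(X,\Om^p_X)$ carries the $L_{\eu}$-action, so that $i_{\eu}$ descends to a homotopy between the $d_1$-complex and a smaller complex with zero differential. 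The care needed is in checking that passing to cohomology $H^q(X,-)$ commutes with the weight decomposition and with the homotopy operator, which holds because everything is $\C^*$-equivariant and $k$ has characteristic zero (so the weight eigenspace projectors exist), but it must be spelled out; this is the one place the characteristic-zero hypothesis is genuinely used.
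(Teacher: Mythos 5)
Your strategy is essentially the paper's proof: use the $\G_m$-action scaling the odd directions of the split structure, decompose $\Om^\bullet_X$ into weight eigenspaces of the Lie derivative $L_\xi$ along the Euler field $\xi=\sum_i\th_i\pa_{\th_i}$ (a decomposition compatible with $d$ and with each column $\Om^p_X$), and use the Cartan homotopy $i_\xi$ to contract the positive-weight pieces; since $i_\xi$ is $\OO_X$-linear it descends to the $E_1$ page and kills it there, which is exactly how the paper handles the fact that $i_\xi$ does not respect the Hodge filtration. One detail in your second paragraph is off, though: $L_\xi$ acts on the weight-$w$ summand by the single scalar $w$ (because $d\th_i$ carries the same weight as $\th_i$), not by a scalar depending also on the odd form-degree. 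Hence in characteristic zero every weight-$w$ piece with $w\ge 1$ is contractible outright --- there is no residual ``subcomplex generated by closed odd differentials,'' no Koszul-type computation, and nothing from positive weight to be ``cancelled or accounted for'' in $H^{p,q}(X_0)$. The only surviving weight is $w=0$, which is $\Om^\bullet_{X_0}$ with its Hodge filtration, whose spectral sequence degenerates at $E_1$; this replaces the vague final step of your outline and completes the argument.
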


\begin{proof}
Let $\OO_X=\bigwedge^\bullet(E)$, where $E$ is a bundle over $X_0$. Then we have an action of $\G_m$ on $X$, such that $\OO_{X_0}$ is $\G_m$-invariant,
and $E$ has weight $1$. Let $\xi$ be the corresponding global (even) vector field on $X$. In local coordinates $(x_\bullet,\th_\bullet)$ one has $\xi=\sum_i \th_i \partial_{\th_i}$.
Now we claim that for each $n\ge 0$, one has
$$\Om^n_X=\bigoplus_{i\ge 0} \Om^n_i,$$
where $\Om^n_i\sub \Om^n_X$ is the subsheaf of $\eta$ such that $L_\xi(\eta)=i \eta$, where $L_\xi$ is the Lie derivative with respect to $\xi$.
Indeed, this is a local statement, which is clear in local coordinates. Furthermore, for each $i\ge 0$, $\Om^\bullet_i$ is preserved by the de Rham differential (since $dL_\xi=L_\xi d$), and
$$\Om^\bullet_0\simeq \Om_{X_0}^\bullet.$$
Finally, the contraction operator $i_\xi$ gives a homotopy from $i\cdot \id$ to $0$ on $\Om^\bullet_i$. Hence, each complex $\Om^\bullet_i$ for $i>0$ is contractible.
This implies that the $E_1$ page of the spectral sequence associated with each $\Om^\bullet_i$ for $i>0$ is also contractible, so the $E_2$ page corresponding to each $\Om^\bullet_i$
with $i>0$ is zero. This implies our assertion.
\end{proof}

\begin{remark}
When $X$ varies in a flat family, there is no semicontinuity for the spaces $E_2^{p,q}(X)$ (since they are obtained by taking cohomology twice),
so the split case does not provide any nontrivial conclusions for the behavior of $E_2^{p,q}(X)$ for general $X$. For example, it is possible that for some supervariety $X$
one has $E_2^{p,q}(\gr X)=0$ for some $(p,q)$, while $E_2^{p,q}(X)\neq 0$ (where $\gr X$ is the split supervariety associated with $X$), see Example \ref{Gr-ex} below.
\end{remark}

When the characteristic of the ground field is $2$ then the vector field $\xi=\sum_i \th_i \partial_{\th_i}$ used in Theorem \ref{split-thm} does not depend on coordinates.
Namely, it corresponds to the derivation $\xi$ such that $\xi(f)=0$ for even $f$ and $\xi(\eta)=\eta$ for odd $\eta$.
This immediately leads to the following result.

\begin{prop}\label{char-2-prop}
Assume that the ground field $k$ has characterstic $2$. Then the odd part of the de Rham complex $(\Om^\bullet_X)_-$ is homotopic to zero. Hence, $E_2(X)_-=0$.
\end{prop}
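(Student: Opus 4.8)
The plan is to imitate the proof of Theorem \ref{split-thm}, but now working with the single global derivation $\xi$ available in characteristic $2$ rather than with a $\G_m$-action, which does not exist in that setting. First I would note that in characteristic $2$ the parity grading on $\Om^\bullet_X$ refines to an honest action of the operator $L_\xi$, where $\xi$ is the coordinate-independent derivation determined by $\xi(f)=0$ for $f$ even and $\xi(\eta)=\eta$ for $\eta$ odd. Since $L_\xi$ commutes with $d$, the de Rham complex splits into the direct sum of generalized eigenspaces (eigen-sub-complexes) of $L_\xi$, and since $L_\xi^2=L_\xi$ on any $\Z/2$-graded module in characteristic $2$ (one checks this on generators $dx$, $d\th$, $x$, $\th$), the only eigenvalues are $0$ and $1$; the eigenvalue-$0$ part is precisely the even subcomplex $(\Om^\bullet_X)_+$ and the eigenvalue-$1$ part is $(\Om^\bullet_X)_-$.

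Next I would produce the contracting homotopy. Cartan's magic formula $L_\xi = d\, i_\xi + i_\xi\, d$ holds for the contraction operator $i_\xi$ associated with the even vector field $\xi$. Restricted to the odd subcomplex $(\Om^\bullet_X)_-$, the operator $L_\xi$ acts as the identity, so the identity map on $(\Om^\bullet_X)_-$ equals $d\,i_\xi + i_\xi\,d$; that is, $i_\xi$ is a null-homotopy of the identity on $(\Om^\bullet_X)_-$. One small point to verify is that $i_\xi$ indeed preserves the odd subcomplex and lowers cohomological degree by one, which is immediate from the parity conventions (the de Rham differential being even, $i_\xi$ is odd in the form-degree but the relevant $\Z/2$-parity bookkeeping makes it map $(\Om^p_X)_-\to(\Om^{p-1}_X)_-$). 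Hence $(\Om^\bullet_X)_-$ is homotopic to zero as a complex of Zariski sheaves.

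Finally, the consequence for the spectral sequence is formal. The Hodge to de Rham spectral sequence of $X$ decomposes as the direct sum of the spectral sequences attached to the two subcomplexes $(\Om^\bullet_X)_+$ and $(\Om^\bullet_X)_-$ (the $E_1$ terms $H^q(X,\Om^p_X)$ split according to parity, and everything is functorial in the subcomplex). For the odd summand, the null-homotopy on $(\Om^\bullet_X)_-$ induces a null-homotopy on its Cech complex computing hypercohomology, hence on the whole $E_1$ page of that summand, so its $E_2$ page vanishes. Therefore $E_2(X)_- = 0$.

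I expect the main obstacle to be purely notational rather than mathematical: getting the parity conventions for $i_\xi$ consistent with the paper's convention that $d$ is even, and making sure Cartan's formula $L_\xi=di_\xi+i_\xi d$ is stated with the correct signs in the super/characteristic-$2$ setting (in characteristic $2$ signs are harmless, which actually simplifies matters). The genuinely substantive input — that $L_\xi$ acts as the identity on odd forms, giving $i_\xi$ as an explicit contracting homotopy — is exactly the characteristic-$2$ shadow of the argument already carried out in Theorem \ref{split-thm}, so little new work is required.
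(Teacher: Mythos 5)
Your argument is correct and is essentially the paper's own proof: the coordinate-independent derivation $\xi$ (with $\xi(f)=0$ for even $f$, $\xi(\eta)=\eta$ for odd $\eta$) satisfies $L_\xi=\id$ on $(\Om^\bullet_X)_-$ in characteristic $2$, and Cartan's formula makes $i_\xi$ the contracting homotopy. The extra discussion of generalized eigenspaces of $L_\xi$ is harmless but unnecessary, since the eigenspace decomposition is just the parity decomposition.
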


\begin{proof} The homotopy from $\id$ to $0$ on $(\Om^\bullet_X)_-$ is given by the contraction operator $i_\xi$.
\end{proof}

\subsection{Odd dimension $2$}\label{n2-sec}

In this section we consider the case when $X$ is of dimension $n|2$. We assume that the characteristic of $k$ is zero.


For each $p,q$, let us set
$$\de_{pq}(X):=\dim \coker(H^q(\Om^p_X)\to H^q(\Om^p_{X_0})).$$ 
Note that by Corollary \ref{surj-Hi-cor}, we have $\de_{0q}=0$.

\begin{theorem}\label{n2-dim-thm}
Assume that $X$ has dimension $n|2$, and the characteristic of $k$ is zero. 
Then 

\noindent
(i) one has $(E_2^{pq})_-=0$;

\noindent
(ii) the Hodge to de Rham spectral sequence of $X$ degenerates at $E_2$;

\noindent
(iii) one has
$$\dim E_2^{pq}=h^{pq}(X_0)+\de_{p+1,q-1}(X)-\de_{p,q}(X).$$
\end{theorem}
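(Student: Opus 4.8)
The plan is to analyze the Hodge to de Rham spectral sequence of a supervariety $X$ of dimension $n|2$ by separating the parity-even and parity-odd parts and by exploiting the structure of the odd conormal bundle, which now has rank $2$. Write $\NN/\NN^2 = \EE$, a rank-$2$ bundle on $X_0$. The key structural fact is that in odd dimension $2$ the de Rham complex $\Om^\bullet_X$ has a very constrained associated graded (with respect to the $\NN$-adic or the weight filtration by the number of odd differentials), so that its cohomology sheaves, and the differentials $d_1$, can be controlled explicitly.

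First I would establish (i). The idea is analogous to Proposition \ref{char-2-prop} and Theorem \ref{split-thm}: one wants to show $(\Om^\bullet_X)_-$ is ``small'' on the $E_2$ page. In odd dimension $2$, I expect that the odd part of the de Rham complex, or at least its $E_1$ page with its $d_1$ differential, admits a contracting homotopy after passing to an appropriate associated graded, forcing $(E_2^{pq})_- = 0$. Concretely, I would filter $\Om^\bullet_X$ by powers of $\NN$ (plus $d\NN$), as in the proof of Theorem \ref{deR-thm}, and observe that the odd-parity part of each graded piece is a Koszul-type complex on the rank-$2$ bundle $\EE$ which is exact except possibly in a single cohomological slot; tracking parities shows the surviving cohomology is even. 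The rank-$2$ hypothesis is what makes the combinatorics of $\bigwedge^\bullet \EE$, $S^\bullet \EE$ and their Koszul interaction manageable.

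Next, granting (i), I would prove (iii), and then (ii) follows since once $(E_2^{pq})_- = 0$ the only potentially nonzero higher differentials $d_r$ ($r\ge 2$) would have to go between even terms, but by the skew-filtration estimates in the Propositions above (in particular $F_p(X)\subset F_p(X_0)$ together with the exact dimension count $\sum_{pq}\dim E_\infty^{pq}(X) = \sum_{pq} h^{pq}(X_0)$) any such differential must vanish — alternatively, the dimension formula in (iii) will itself sum to $\dim H^\bullet_{dR}(X_0)$, forcing degeneration. For (iii) itself, the plan is to compute $E_2^{pq} = (E_2^{pq})_+$ directly from the $E_1$ page. Since $(E_2^{pq})_- = 0$, the complex $H^q(\Om^\bullet_X)$ in the $q$-th row, restricted to even parity, is forced to have cohomology only in the even-parity part, and one can compare the $E_1$-complex $\bigoplus_p H^q(\Om^p_X)$ with the split-model complex $\bigoplus_p H^q(\Om^p_{X_0})\oplus(\text{terms from }\EE)$. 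The cokernel invariants $\de_{pq}(X)$ measure precisely the failure of $H^q(\Om^p_X)\to H^q(\Om^p_{X_0})$ to be surjective, and in odd dimension $2$ the kernel of this map in parity-even degree should be identified, via Serre-type duality against the rank-$2$ Berezinian twist or via the weight decomposition, with $\de_{p+1,q-1}(X)$ shifted appropriately. Taking an Euler-characteristic / rank count along the $q$-th row of the $d_1$-differential and using exactness of the auxiliary Koszul complex then yields $\dim E_2^{pq} = h^{pq}(X_0) + \de_{p+1,q-1}(X) - \de_{p,q}(X)$.

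The main obstacle I anticipate is step (iii): obtaining the precise bookkeeping that relates the kernel of $H^q(\Om^p_X)\to H^q(\Om^p_{X_0})$ to the cokernel invariant $\de_{p+1,q-1}(X)$. This is where the restriction to odd dimension $2$ is essential — the odd conormal bundle has rank $2$, so $\bigwedge^2\EE$ is a line bundle and the ``top'' odd form direction behaves like a single auxiliary variable, making the relevant long exact sequences short enough to extract a clean two-term answer. In higher odd dimension the analogous complex has more pieces and no such closed formula is expected. I would therefore spend most of the effort setting up the correct filtration on $\Om^\bullet_X$ adapted to the rank-$2$ bundle $\EE$, identifying the $E_1$ differential $d_1$ on each graded piece with an explicit map built from $d_{X_0}$ and contraction/multiplication by sections of $\EE$, and then running the diagram chase that converts ``$(E_2)_- = 0$'' into the stated dimension count.
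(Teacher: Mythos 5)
Your high-level architecture matches the paper's: prove $(E_2^{pq})_-=0$, bound $\dim E_2^{pq}$ by $h^{pq}(X_0)+\de_{p+1,q-1}-\de_{pq}$, and sum over $(p,q)$ to hit $\dim H_{dR}(X_0)=\dim E_\infty(X)$, which forces degeneration and turns the inequalities into equalities. But the central technical step is missing, and the mechanism you propose for it is actually invalid. To get $(E_2^{pq})_-=0$ you need the complex $p\mapsto H^q(X,(\Om^p_X)_-)$ to be exact; mere exactness of $(\Om^\bullet_X)_-$ as a complex of sheaves is not enough, since exactness does not survive applying $H^q(X,-)$ — you need a contracting homotopy by sheaf maps, i.e.\ split exactness. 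You propose to obtain this ``after passing to an appropriate associated graded'' of the $\NN$-adic filtration. That cannot work as stated: the graded pieces are Koszul-type complexes that are contractible in \emph{any} odd dimension (this is exactly the split case, where $i_\xi$ contracts every positive-weight summand), so if contractibility of the associated graded forced $(E_2^{pq})_-=0$, the conclusion would hold for all supervarieties in characteristic zero — contradicting Theorem \ref{nondeg-E2-thm}, where $E_2^{01}(X)_-\neq 0$ in dimension $1|3$. The rank-$2$ hypothesis must enter precisely in lifting the splitting from the graded to the actual complex. This is what the paper's Lemma \ref{n2-odd-lem} does: a decreasing induction on $k$, with the auxiliary subobjects $G^k\Om^n_\pm=(\NN^2F^{k-2}\Om^n)_\pm+F^{k+2}\Om^n_\pm$ and a split-injectivity criterion for maps of short exact sequences, showing that $d:F^k\Om^n_\pm/d(F^k\Om^{n-1}_\pm)\to F^k\Om^{n+1}_\pm/\NN^2F^{k-2}\Om^{n+1}_\pm$ is a split injection; Proposition \ref{n2-odd-thm} then concludes that $(\Om^\bullet_X)_-$ and $(F^2\Om^\bullet_X)_+$ are genuinely homotopic to zero. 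Without this (or an equivalent lifting argument) part (i) is unproven.

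The same gap propagates to (iii). Your suggestion to identify $\ker(H^q(\Om^p_X)_+\to H^q(\Om^p_{X_0}))$ with $\de_{p+1,q-1}$ ``via Serre-type duality against the Berezinian twist'' is not the right mechanism. The paper's bound $\dim\ker((E_2^{pq})_+\to H^q(\Om^p_{X_0}))\le\de_{p+1,q-1}(X)$ comes from a diagram chase on $0\to F^2(\Om^p_X)_+\to(\Om^p_X)_+\to\Om^p_{X_0}\to 0$, and crucially uses exactness of the row $H^q(F^2(\Om^{p-1}_X)_+)\to H^q(F^2(\Om^p_X)_+)\to H^q(F^2(\Om^{p+1}_X)_+)$ — which again follows only from the contractibility of $(F^2\Om^\bullet_X)_+$ established above. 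Your final summation step is correct, but both (i) and (iii) rest on the split-exactness statements that your proposal does not actually supply.
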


The proof will be based on the following result, where we use the filtration $F^\bullet\Om^\bullet_X$ introduced in the proof of Theorem \ref{deR-thm}.

\begin{prop}\label{n2-odd-thm}
Assume that $X$ has dimension $n|2$, and the characteristic of $k$ is zero.
Then the complexes $(\Om^\bullet_X)_-$ and $(F^2\Om^\bullet_X)_+$ are homotopic to zero.
\end{prop}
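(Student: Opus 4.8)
The plan is to prove a single cleaner statement that contains both. A coordinate monomial involving $a$ odd functions and $b$ odd differentials $d\th$ has parity $a+b$; since $F^1_\NN\Om^\bullet_X$ (this is the complex $K$ of Theorem \ref{deR-thm}) is spanned by the monomials with $a+b\ge 1$ and $F^2_\NN\Om^\bullet_X$ by those with $a+b\ge 2$, one has a decomposition of complexes $K=F^1_\NN\Om^\bullet_X=(\Om^\bullet_X)_-\oplus(F^2_\NN\Om^\bullet_X)_+$, the odd summand being the monomials of (odd) weight $1,3,5,\dots$ and the even one those of weight $2,4,6,\dots$. So I would reduce the Proposition to: $K$ is contractible when $\dim X=n|2$, i.e.\ to upgrading the acyclicity of $K$ from Theorem \ref{deR-thm} to a genuine null-homotopy.

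For the local picture, in a patch $(x_\bullet,\th_1,\th_2)$ I would use the odd Euler field $\xi=\th_1\pa_{\th_1}+\th_2\pa_{\th_2}$: the contraction $i_\xi$ is $\OO_X$-linear and preserves the weight grading of forms, $\mathcal L_\xi=di_\xi+i_\xi d$ acts on weight-$w$ forms by multiplication by $w$, and $\mathcal L_\xi$ commutes with $d$. Since $w\ge 1$ on $K$, the operator $h:=i_\xi\mathcal L_\xi^{-1}$ is defined (here characteristic zero is used) and gives $dh+hd=\id$ on $K$ locally, compatibly with the parity splitting above; so both complexes of the Proposition are locally contractible by the same formula.

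The substantive step is globalization, and this is exactly where $\dim X=n|2$ enters. The field $\xi$ is not global, but on overlaps the discrepancy $\zeta_{\a\b}=\xi_\a-\xi_\b$ lies in $\NN^2T_X$, and — crucially, because $\NN^3=0$ — the contraction $i_{\zeta_{\a\b}}$ annihilates every form containing a nonzero odd-function factor (it inserts $\th_1\th_2$), so $i_{\zeta_{\a\b}}$, hence $\mathcal L_{\zeta_{\a\b}}$, strictly raises the filtration $F^\bullet_\NN$. I would then assemble a global contracting homotopy on $K$ by descending induction along $F^\bullet_\NN$ (finite in each cohomological degree): on each $\gr^w_F\Om^\bullet_X$ one has, from the proof of Theorem \ref{deR-thm}, a canonical $\OO_{X_0}$-linear homotopy coming from the (global, $\OO_{X_0}$-linear) contracting homotopy of $K_{\ge 1}(\NN/\NN^2)$, and since the discrepancies confine to higher filtration the corrections needed to lift these through the successive extensions fall where the inductive hypothesis applies.

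The hard part is this gluing: one has to check that the obstruction to patching the local homotopies — a Čech class valued in the sheaf of operators strictly raising $F^\bullet_\NN$, which (thanks to $\NN^3=0$) carries a short filtration whose graded pieces are sheaves of homomorphisms between vector bundles on $X_0$ — actually vanishes. That this happens is precisely what dimension $n|2$ provides: in higher odd dimension $\NN^3\ne 0$, the $\zeta_{\a\b}$ no longer confine to higher filtration, and the conclusion genuinely fails already in dimension $1|3$ (Theorem \ref{nondeg-E2-thm}), so no formal argument alone can succeed.
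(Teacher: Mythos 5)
Your opening reduction is correct and clean: locally every form in $F^1_\NN\Om^\bullet_X$ is a sum of monomials of weight $\ge 1$ in the odd generators, so $(F^1_\NN\Om^\bullet_X)_-=(\Om^\bullet_X)_-$ and $(F^1_\NN\Om^\bullet_X)_+=(F^2_\NN\Om^\bullet_X)_+$, and the proposition is equivalent to contractibility of $K=F^1_\NN\Om^\bullet_X$; the local null-homotopy $h_\a=i_{\xi_\a}L_{\xi_\a}^{-1}$ is also fine. But the globalization, which you yourself flag as ``the hard part,'' is not carried out, and it is the entire content of the statement: every supervariety is locally split, so $K$ is locally contractible in \emph{every} odd dimension, while by Theorem \ref{nondeg-E2-thm} the complex $(\Om^\bullet_X)_-$ fails to be contractible for suitable $X$ of dimension $1|3$. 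Worse, the mechanism you identify as ``precisely what dimension $n|2$ provides'' does not actually distinguish $n|2$ from $1|3$. In any odd dimension the discrepancy $\zeta_{\a\b}=\xi_\a-\xi_\b$ lies in $\NN^2 T_{X,+}+\NN^3 T_{X,-}$ (cubic corrections to the odd coordinates contribute $\NN^3\pa_\th$-terms), and contraction against either kind of term raises the filtration $F^\bullet_\NN$ by at least $2$: removing one $d\th$ while inserting three $\th$'s is still a net gain of $2$. Hence $h_\a-h_\b$ strictly raises the filtration in dimension $1|3$ as well, and any argument resting only on that property would prove a false statement.

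The missing content is therefore the vanishing of the gluing obstructions. At each stage of your descending induction you must either patch the local homotopies or, equivalently, split a short exact sequence of sheaves, and the obstruction is a \v{C}ech class in $H^1$ of a sheaf of homomorphisms between vector bundles on $X_0$; such groups do not vanish for formal reasons on a proper $X_0$, and the nonvanishing of exactly such a class is what drives Theorem \ref{nondeg-E2-thm}. The paper's proof (Lemma \ref{n2-odd-lem}) sidesteps this entirely: by a double induction it shows that the maps $F^k\Om^n_\pm/d(F^k\Om^{n-1}_\pm)\to F^k\Om^{n+1}_\pm/\NN^2 F^{k-2}\Om^{n+1}_\pm$ induced by $d$ are split injections, the splittings being produced from \emph{explicit, coordinate-independent} global formulas on canonical subquotients (e.g.\ $df_1\cdots df_{n-k+2}\ot n_1\ot(n_2\cdots n_k)\mapsto d\wt{f}_1\cdots d\wt{f}_{n-k+2}\,n_1\,dn_2\cdots dn_k$), together with split injectivity of the Koszul differential; the well-definedness of these formulas and the identification of the relevant quotients with sheaves of the form $\Om^\bullet_{X_0}\ot S^\bullet(\NN/\NN^2)$ is where $\NN^3=0$ is genuinely used. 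To complete your approach you would need to exhibit an analogous global splitting; as written, the proof has a gap at its decisive step.
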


\begin{lemma}\label{split-lem} 
Assume we have a commutative diagram with exact rows in some abelian category,
\begin{diagram}
0&\rTo{}& A_1&\rTo{}& A_2&\rTo{}& A_3&\rTo{}& 0\\
&&\dTo{f_1}&&\dTo{f_2}&&\dTo{f_3}\\
0&\rTo{}& B_1&\rTo{}& B_2&\rTo{}& B_3&\rTo{}& 0
\end{diagram}
Assume that $f_1$ and $f_3$ are split injections and the second row is split exact, then $f_2$ is also a split injection.
\end{lemma}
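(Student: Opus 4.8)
The plan is to prove Lemma \ref{split-lem} by a direct diagram chase, producing an explicit splitting $g_2:B_2\to A_2$ of $f_2$. Since the bottom row is split exact, fix a retraction $r:B_2\to B_1$ with $r\circ j_1 = \id_{B_1}$ (where $j_1:B_1\to B_2$ is the left horizontal map), and fix a section $s:B_3\to B_2$ of the right horizontal map $j_2:B_2\to B_3$, so that $B_2 = j_1(B_1)\oplus s(B_3)$. Also fix splittings $g_1:B_1\to A_1$ of $f_1$ and $g_3:B_3\to A_3$ of $f_3$. The first task is to decide on which "half" of $B_2$ each piece of the splitting should act: on the $j_1(B_1)$ summand we want to use $g_1$, and on the $s(B_3)$ summand we want to lift via $g_3$ along the surjection $A_2\to A_3$.

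First I would address the only genuinely non-formal point, namely lifting the map $s(B_3)\to A_3$ to $A_2$. The top row is exact, so $A_2\to A_3$ is surjective; choose any set-theoretic-free, additive lift, i.e. since we are in an abelian category and $A_2\twoheadrightarrow A_3$ need not split, instead proceed as follows. Consider the composite $h:= f_2\circ ?$ — more carefully, consider the map $B_2 \xrightarrow{\;\id - j_1\circ r\;} B_2$, whose image lies in a complement isomorphic to $B_3$ via $j_2$; then $g_3\circ j_2\circ(\id - j_1 r):B_2\to A_3$. We need to lift this to a map $B_2\to A_2$ through the epimorphism $\pi:A_2\to A_3$. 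This is where the argument must be done with care: a priori one cannot lift maps through epimorphisms in a general abelian category. The resolution is to observe that we do not need to lift an abstract map — we only need the splitting on elements of the form $f_2(a)$ to recover $a$, and to use the snake-type relations among $f_1,f_2,f_3$ and the fixed data to \emph{define} $g_2$ on each summand separately and then check compatibility.

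Concretely, the cleanest route I would take is: define $g_2 := i_1\circ g_1\circ r + (\text{a lift of } g_3\circ j_2)$, where $i_1:A_1\to A_2$ is the left map of the top row, and where the lift of $g_3\circ j_2:B_2\to A_3$ is constructed by first noting that $g_3\circ j_2$ kills $j_1(B_1)$ (since $j_2 j_1 = 0$), hence factors through $B_2/j_1(B_1)\cong B_3$, and then composing the section $s$ with a chosen lift of $g_3:B_3\to A_3$ along $\pi:A_2\to A_3$. To produce the chosen lift of $g_3$, I use that $f_3$ is \emph{split}: let $e_3:= f_3\circ g_3:B_3\to B_3$ be the idempotent onto $f_3(A_3)\subseteq B_3$; actually the honest fix is that we are lifting along $\pi$ the specific map $g_3$, and $\pi$ being an epi of an exact sequence $0\to A_1\to A_2\to A_3\to 0$ means we can pick \emph{any} lift if $A_3$ is projective — which it need not be. So the real plan is to avoid lifting $g_3$ and instead lift the endomorphism $\id_{B_2}$ componentwise: set $g_2(b) := i_1(g_1(r(b))) + \widetilde{g_3 j_2}(b)$ is replaced by checking on $b = f_2(a)$ directly that the naive formula works once we also correct by the image of $f_1$. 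I will carry this out by writing $a\in A_2$, setting $b=f_2(a)$, computing $r(b)$ and $j_2(b)$ in terms of $\pi(a)$ via commutativity, and verifying $g_2(f_2(a)) = a$ up to an element of $i_1(A_1)$ that is then absorbed by adjusting with $g_1$.

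The main obstacle, and the step deserving the most care, is precisely the lifting issue: in an arbitrary abelian category one must not assume $A_2\to A_3$ splits. The hypothesis that carries the weight is that \emph{both} $f_1$ and $f_3$ split and the bottom row splits; the correct bookkeeping shows that $f_2(A_2)$ is a direct summand of $B_2$ with complement $s(B_3)\cap(\text{something})$, and the splitting $g_2$ is the projection onto $f_2(A_2)$ followed by $f_2^{-1}$ — so the real content is proving $B_2 = f_2(A_2)\oplus C$ for a suitable $C$. I expect the slick finish is: the map $\varphi:= i_1\circ g_1\circ r + (\text{lift}):B_2\to A_2$ satisfies $\varphi\circ f_2 = \id_{A_2}$ by a short chase using $r j_1=\id$, $j_2 j_1=0$, $g_1 f_1=\id$, $g_3 f_3=\id$, and the two commuting squares; I will present that chase explicitly, since it is short, rather than the more abstract summand argument.
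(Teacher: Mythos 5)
The paper leaves this proof to the reader, so there is no printed argument to compare against; judged on its own, your proposal has a genuine gap. You correctly isolate the one non-formal point — producing a lift of $g_3\circ j_2:B_2\to A_3$ through the epimorphism $\pi:A_2\to A_3$ — and you correctly note that in a general abelian category one cannot lift a map through an epimorphism. But you never actually produce that lift: the text cycles through several candidate repairs (``check the formula only on $b=f_2(a)$'', ``correct by the image of $f_1$'', ``prove $B_2=f_2(A_2)\oplus C$'') without carrying any of them out, and your final formula $\varphi=i_1\circc g_1\circ r+(\text{lift})$ still contains the undefined term. In particular, verifying a would-be retraction only on the subobject $f_2(A_2)$ does not define a morphism on all of $B_2$, so that route is not a fix.

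The missing observation is that the hypotheses already force the \emph{top} row to split, and this is exactly what removes the lifting obstruction. Write $i_1:A_1\to A_2$, $\pi:A_2\to A_3$ for the top row, $j_1,j_2$ for the bottom row, $r:B_2\to B_1$ for a retraction of $j_1$, and $g_1,g_3$ for the given retractions of $f_1,f_3$. Then
$$(g_1\circ r\circ f_2)\circ i_1=g_1\circ r\circ j_1\circ f_1=g_1\circ f_1=\id_{A_1},$$
so $\rho:=g_1\circ r\circ f_2$ is a retraction of $i_1$; hence the top row splits, and there is a section $\sigma:A_3\to A_2$ with $i_1\rho+\sigma\pi=\id_{A_2}$ (take $\sigma$ to be the unique factorization of $\id_{A_2}-i_1\rho$ through $\pi$). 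Now $g_2:=i_1\circ g_1\circ r+\sigma\circ g_3\circ j_2$ is an honest morphism $B_2\to A_2$, and
$$g_2\circ f_2=i_1\rho+\sigma\, g_3\, j_2\, f_2=i_1\rho+\sigma\, g_3\, f_3\,\pi=i_1\rho+\sigma\pi=\id_{A_2}.$$
This is precisely the ``short chase'' you anticipated; the ingredient you were not exploiting is that the splitting of $f_1$, together with the splitting of the bottom row and the commutativity of the left square, already splits $0\to A_1\to A_2\to A_3\to 0$, so the lift of $g_3$ you were missing is simply $\sigma\circ g_3$.
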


The proof is straightforward and is left to the reader.

Below we will write $(\Om^\bullet_X)_\pm=\Om^\bullet_\pm$ for brevity. For $k\ge 2$, let us set 
$$G^k\Om^n_\pm:=(\NN^2 F^{k-2}\Om^n)_\pm + F^{k+2}\Om^n_\pm,$$
where the parity of $k$ is the same as the parity of the space. For $k=1$, we set $G^1\Om^n_-=F^3\Om^n_-$. 

\begin{lemma}\label{n2-odd-lem}
(i) For every $k\ge 1$
and every $n$, the following sequence is exact:
\begin{equation}\label{N2-Fk-main-seq}
0\to G^k\Om^n_\pm/d(F^{k+2}\Om^{n-1}_\pm)\to F^k\Om^n_\pm/d(F^k\Om^{n-1}_\pm)\rTo{\de}\Om^{n-k+1}_{X_0}\ot S^k(\NN/\NN^2)\to 0,
\end{equation}
where we use $+$ for even $k$ (resp., $-$ for odd $k$), and $\de$ is given by 
$$\de(\a_{n-k+1}\cdot n_1dn_2\cdot\ldots\cdot dn_k)=\a_{n-k+1}|_{X_0}\ot (\ov{n}_1\cdot\ov{n}_2\cdot\ldots\cdot\ov{n}_k),$$ 
with $\a_{n-k+1}\in \Om^{n-k+1}$.

\noindent
(ii) For every $k\ge 1$, the following map induced by $d$ is a split injection:
\begin{equation}\label{split-inj-main-square}
F^k\Om^n_\pm/d(F^k\Om^{n-1}_\pm)\to F^k\Om^{n+1}_\pm/\NN^2\cdot F^{k-2}\Om^{n+1}_\pm,
\end{equation}
where for $k=1$ we replace $\NN^2 F^{-1}\Om^{n+1}_-$ by zero, and we use $+$ for even $k$ (resp., $-$ for odd $k$),
\end{lemma}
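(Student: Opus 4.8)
The plan is to reduce both statements to a local computation in coordinates and then to propagate splittings using Lemma~\ref{split-lem}. Both assertions are local on $X_0$: exactness in (i) is checked on stalks, and for (ii) one builds the splitting in a coordinate patch after arranging the argument so that it is natural. So fix local coordinates $(x_1,\dots,x_n,\th_1,\th_2)$ with $\OO_X=\OO_{X_0}[\th_1,\th_2]$; then $\Om^\bullet_X$ is the free graded-commutative $\OO_{X_0}$-algebra on the $dx_i$ (form degree $1$, even), the $\th_j$ (form degree $0$, odd) and the $d\th_j$ (form degree $1$, odd, but squaring freely, which is why $\Om^\bullet_X$ is unbounded). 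Assign weight $1$ to $\th_j$ and $d\th_j$ and weight $0$ to $x_i,dx_i$. Then $d$ is weight-preserving, and inspection of the generators defining $F^\bullet_\NN$ shows that $F^k\Om^\bullet_X$ is the span of the monomials of weight $\ge k$, while $\NN^2F^{k-2}\Om^\bullet_X$ is the span of those with exactly two $\th$'s and at least $k-2$ of the $d\th$'s. Write $d=d_x+d_\th$, where $d_x=\sum_i dx_i\,\partial_{x_i}$ differentiates the $\OO_{X_0}$-coefficients and preserves the number of $\th$'s and $d_\th:\th_j\mapsto d\th_j$ lowers it by one; recall from the proof of Theorem~\ref{deR-thm} that $(\OO_{X_0}\langle\th_1,\th_2,d\th_1,d\th_2\rangle,d_\th)$ is contractible in each positive weight, and fix an $\OO_{X_0}$-linear, weight-preserving contracting homotopy $h$ for it.

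For part (i), I would define $\de$ on $F^k\Om^n$ by using $d_\th$ to trade each $d\th_j$ for $\th_j$ modulo $d(F^k\Om^{n-1})$ (integration by parts) and then extracting the component with $k$ of the $\th$'s and none of the $d\th$'s; this is automatically zero for $k\ge 3$, in accord with $S^k(\NN/\NN^2)=0$, and surjectivity is immediate from the stated formula. Exactness in the middle splits into two points. First, if $\de(\eta)=0$ then, after subtracting a suitable element of $d(F^k\Om^{n-1})$ and applying $h$ weight by weight (equivalently, using the exactness of $\gr^\bullet_F$ established in Theorem~\ref{deR-thm}), one moves $\eta$ into $(\NN^2F^{k-2}\Om^n)_\pm+F^{k+2}\Om^n_\pm=G^k\Om^n_\pm$. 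Second, the left-hand map is injective, i.e.\ $G^k\Om^n\cap d(F^k\Om^{n-1})=d(F^{k+2}\Om^{n-1})$: if $d\b\in G^k$ with $\b\in F^k$, then the weight-$k$ and weight-$(k+1)$ components of $d\b$ lie in the ``two-$\th$'' summands of the relevant $\gr_F$, and one applies $h$ to modify $\b$ by $d$-closed forms until its minimal weight is $\ge k+2$ while leaving $d\b$ unchanged. The case $k=1$ is uniform with the rest once one observes that on the odd part $F^2\Om^\bullet_-=F^3\Om^\bullet_-$, which is precisely why $G^1\Om^n_-$ is defined to be $F^3\Om^n_-$.

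For part (ii), I would deduce \eqref{split-inj-main-square} from part~(i) and Lemma~\ref{split-lem} by descending induction on $k$ (the base case $k>n+2$, where $F^k\Om^n=0$, being vacuous). Split the source $F^k\Om^n_\pm/d(F^k\Om^{n-1}_\pm)$ via the exact sequence of (i) and follow $d$ into the target $F^k\Om^{n+1}_\pm/\NN^2F^{k-2}\Om^{n+1}_\pm$. On the quotient $\Om^{n-k+1}_{X_0}\ot S^k(\NN/\NN^2)$, unwinding $\de$ shows that the induced map is, up to sign, the identity of $\Om^{n-k+1}_{X_0}$ tensored with the natural isomorphism from $S^k(\NN/\NN^2)$ to the span of the degree-$k$ monomials in the $d\th$'s, hence a split injection onto a direct summand of the target. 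On the subobject $G^k\Om^n_\pm/d(F^{k+2}\Om^{n-1}_\pm)$ one decomposes $G^k$ by $0\to\NN^2F^k\to\NN^2F^{k-2}\oplus F^{k+2}\to G^k\to0$; on the $\NN^2F^{k-2}$-summand $d$ agrees, modulo $\NN^2F^{k-2}\Om^{n+1}$, with multiplication by $d_\th(\th_1\th_2)$, which is a split injection with section coming from $h$, and the $F^{k+2}$-summand is controlled by the case $k+2$ of \eqref{split-inj-main-square}. Lemma~\ref{split-lem} then assembles the splittings on subobject and quotient into a splitting of \eqref{split-inj-main-square}; for $k=1$ the target is just $F^1\Om^{n+1}_-=(\Om^{n+1}_X)_-$ and one argues directly.

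The step I expect to be the real obstacle is the reconciliation, in part (ii), of the two different quotients in \eqref{split-inj-main-square}: the source is divided by the $d$-exact forms coming from $F^k$ but the target only by the ideal $\NN^2F^{k-2}$, so one must check that $d$ nonetheless descends to a well-defined map between exactly these quotients, that the short exact sequences used on the source and on the target really match up under $d$, and that the section produced from $h$ is compatible with all of the auxiliary sequences (and, if one wants a sheaf statement rather than a stalkwise one, that the locally defined sections patch). The remaining ingredients — the weight-grading identifications, the contractibility of the $d_\th$-complex, and the formal diagram chase behind Lemma~\ref{split-lem} — should be routine by comparison.
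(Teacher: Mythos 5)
There are two genuine problems. The first is in part (i): you misidentify the cokernel of the inclusion $G^k\sub F^k$. You describe $\de$ as ``trading each $d\th_j$ for $\th_j$'' by integration by parts and then extracting the component with $k$ undifferentiated $\th$'s, concluding that this vanishes for $k\ge 3$ ``in accord with $S^k(\NN/\NN^2)=0$''. But integration by parts, $d(n_1n_2\,dn_3\cdots dn_k)=(dn_1)n_2\,dn_3\cdots dn_k\pm n_1\,dn_2\cdots dn_k$, never increases the number of undifferentiated odd functions; modulo $d(F^k\Om^{n-1})$ a weight-$k$ monomial outside $\NN^2F^{k-2}$ is equivalent to one of the form $\a\cdot n_1dn_2\cdots dn_k$ with exactly one undifferentiated $n$. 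The quotient is the ordinary symmetric power $S^k(\NN/\NN^2)$ of the rank-two bundle $\NN/\NN^2$ (the $dn_i$ commute, being odd of cohomological degree one), which has rank $k+1$ and is nonzero for every $k$ --- it is the same $S^\bullet$ that makes $\Om^\bullet_X$ unbounded in the proof of Theorem \ref{deR-thm}. Your reading collapses the statement for $k\ge 3$ and thereby breaks the descending induction in part (ii), which needs the sequence \eqref{N2-Fk-main-seq} for all $k$ up to $n+2$.

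The second problem is the one you flag but do not resolve: the splittings in part (ii) must be globally defined morphisms of sheaves. This is not cosmetic. The lemma is used to show that $(\Om^\bullet_X)_-$ and $(F^2\Om^\bullet_X)_+$ are null-homotopic (Proposition \ref{n2-odd-thm}), and it is the resulting global contracting homotopy that makes the sequences $H^q(F^2(\Om^{p-1}_X)_+)\to H^q(F^2(\Om^{p}_X)_+)\to H^q(F^2(\Om^{p+1}_X)_+)$ exact in the proof of Theorem \ref{n2-dim-thm}; a complex of sheaves that is only stalkwise contractible would not give this. Your retractions are built from a choice of coordinates $(x_i,\th_j)$ and a coordinate-dependent homotopy $h$, with no mechanism for patching. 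The paper avoids the issue by never choosing coordinates: part (i) is proved by induction on $n$ through two commutative diagrams whose outer terms carry the canonical Koszul differential $\kappa$, and part (ii) propagates global splittings by descending induction on $k$ via Lemma \ref{split-lem}, the only explicit splitting being $df_1\cdots df_{n-k+2}\ot n_1\ot(n_2\cdots n_k)\mapsto d\wt{f}_1\cdots d\wt{f}_{n-k+2}\,n_1\,dn_2\cdots dn_k$, which is checked to be independent of the liftings $\wt{f}_i$ and hence glues. Until you either exhibit coordinate-independent retractions or run the induction with canonical splittings as the paper does, part (ii) is not established.
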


\begin{proof} We assume that $k\ge 1$ is odd. The case of even $k\ge 2$ is completely analogous.

\noindent
(i) We use the induction on $n\ge -1$ (for $n=-1$ the assertion is clear).
Let us write $S^i=S^i(\NN/\NN^2)$ for brevity.

Consider the commutative diagram with exact rows
(where the first row is exact by the induction assumption)
\begin{diagram}
0&\rTo{}& G^k\Om^{n-1}_-/d(F^{k+2}\Om^{n-2}_-)&\rTo{}& F^k\Om^{n-1}_-/d(F^k\Om^{n-2}_-)&\rTo{\de}&\Om^{n-k}_{X_0}\ot S^k&\rTo{}& 0\\
&&\dTo{d}&&\dTo{d}&&\dTo{\id}\\
0&\rTo{}& \NN\cdot F^{k-1}\Om^n_+ + G^k\Om^n_-&\rTo{}& F^k\Om^n_-&\rTo{}&\Om^{n-k}_{X_0}\ot S^k&\rTo{}& 0
\end{diagram} 
It shows that the natural embedding induces an isomorphism
\begin{equation}\label{NdN-Fk-isom}
(\NN \cdot F^{k-1}\Om^{n}_+ +G^k\Om^n_-)/d(G^k\Om^{n-1}_-)\rTo{\sim} F^k\Om^n_-/d(F^k\Om^{n-1}_-).
\end{equation}
Next, we have a commutative diagram with exact rows,
\begin{diagram}
0&\rTo{}& F^{k+2}\Om^{n-1}_-&\rTo{}&  G^k\Om^{n-1}_-&\rTo{}& \Om_{X_0}^{n-k+1}\ot\NN^2\ot S^{k-2}&\rTo{}& 0\\
&&\dTo{d}&&\dTo{d}&&\dTo{\id\ot\kappa}\\
0&\rTo{}& G^k\Om^n_-&\rTo{}& \NN \cdot F^{k-1}\Om^{n}_+ +G^k\Om^n_-&\rTo{}& \Om_{X_0}^{n-k+1}\ot\NN/\NN^2\ot S^{k-1}&\rTo{}& 0
\end{diagram}
where $\kappa$ is the Koszul differential. Since $\kappa$ is injective, we deduce exactness of the sequence of the cokernels of the vertical maps. Taking into account the
isomorphism \eqref{NdN-Fk-isom}, we get exactness of
\eqref{N2-Fk-main-seq}.

\noindent
(ii) We use 
the decreasing induction on $k$. Note that $F^{n+3}\Om^n_-=0$, so the base of the induction is clear.

Assume first that $k\ge 3$.
Consider the commutative diagram with exact rows,
\begin{equation}\label{F-G-modN2-eq1}
\begin{diagram}
0&\rTo{}& G^k\Om^n_-/d(F^{k+2}\Om^{n-1}_-)&\rTo{}& F^k\Om^n_-/d(F^k\Om^{n-1}_-)&\rTo{\de}&\Om^{n-k+1}_{X_0}\ot S^k&\rTo{}& 0\\
&&\dTo{d}&&\dTo{d}&&\dTo{\id}\\
0&\rTo{}& (\NN \cdot F^{k-1}\Om^{n+1}_+ +G^k\Om^{n+1}_-)/\NN^2 F^{k-2}\Om^{n+1}_-&\rTo{}& 
F^k\Om^{n+1}_-/\NN^2 F^{k-2}\Om^{n+1}_-&\rTo{}&\Om^{n-k+1}_{X_0}\ot S^k&\rTo{}& 0
\end{diagram}
\end{equation}
Using Lemma \ref{split-lem} (with $f_3=\id$),
we see that it is enough to prove that the left vertical arrow
is a split injection. But the same map appears as the middle vertical arrow in the commutative diagram with exact rows,
\begin{equation}\label{last-F-G-diagram}
\begin{diagram}
0&\rTo{}& F^{k+2}\Om^n_-/d(F^{k+2}\Om^{n-1}_-)&\rTo{}& G^k\Om^n_-/d(F^{k+2}\Om^{n-1}_-)&\rTo{}&\Om^{n-k+2}_{X_0}\ot \NN^2\ot S^{k-2}&\rTo{}&0\\
&&\dTo{d}&&\dTo{d}&&\dTo{\id\ot\kappa}\\
0&\rTo{}& F^{k+2}\Om^{n+1}_-/\NN^2F^k\Om^{n+1}_-&\rTo{}&(\NN \cdot F^{k-1}\Om^{n+1}_+ +G^k\Om^{n+1}_-)/\NN^2 F^{k-2}\Om^{n+1}_-
&\rTo{}&\Om^{n-k+2}_{X_0}\ot \NN/\NN^2\ot S^{k-1}&\rTo{}&0
\end{diagram}
\end{equation}
The leftmost vertical arrow is split injective by the induction assumption, and the rightmost vertical arrow is split injective since the Koszul differential $\kappa$ is so.
By Lemma \ref{split-lem}, it remains to prove that the second row is split exact. But the splitting is induced by the well defined map
\begin{align*}
&\Om^{n-k+2}_{X_0}\ot \NN/\NN^2\ot S^{k-1}\to \NN \cdot F^{k-1}\Om^n_+/\NN^2 F^k\Om^n_-: \\
&df_1\ldots df_{n-k+2}\ot n_1\ot (n_2\ldots n_k)\mapsto d\wt{f}_1\ldots d\wt{f}_{n-k+2} n_1 dn_2 \ldots dn_k,
\end{align*}
where $\wt{f}_i\in (\OO_X)_+$ are liftings of $f_i\in \OO_{X_0}$.

In the case $k=1$, we modify the above argument as follows: we replace $\NN^2F^{-1}\Om^{n+1}_-$ by zero
in the diagram \eqref{F-G-modN2-eq1} and replace the diagram \eqref{last-F-G-diagram} by
\begin{diagram}
0&\rTo{}& F^3\Om^n_-/d(F^3\Om^{n-1}_-)&\rTo{\id}& F^3\Om^n_-/d(F^3\Om^{n-1}_-)&\rTo{}& 0\\
&&\dTo{d}&&\dTo{d}\\
0&\rTo{}& F^3\Om^{n+1}_-/\NN^2 \Om^{n+1}_-&\rTo{}&(\NN \cdot \Om^{n+1}_+ +F^3\Om^{n+1}_-)/\NN^2 \Om^{n+1}_-
&\rTo{}&\Om^{n+1}_{X_0}\ot \NN/\NN^2&\rTo{}&0
\end{diagram}
As before, we see that the bottom row is split exact, and the leftmost vertical arrow is split injective by the induction assumption. Hence, the middle vertical arrow is split exact,
which implies the assertion.
\end{proof}

\begin{proof}[Proof of Proposition \ref{n2-odd-thm}]
For $k=1$ (resp., $k=2$), Lemma \ref{n2-odd-lem}(ii) implies that the map $\Om^n_-/d\Om^{n-1}_-\to \Om^{n+1}_-$
(resp., $F^2\Om^n_+/dF^2\Om^{n-1}_+\to F^2\Om^{n+1}_+$) is a split injection,
which immediately gives the result.
\end{proof}

\begin{proof}[Proof of Theorem \ref{n2-dim-thm}]
Part (i) immediately follows from Proposition \ref{n2-odd-thm}. 
To prove parts (ii) and (iii), it is enough to prove the inequalities 
$$\dim E_2^{pq}=\dim (E_2^{pq})_+\le h^{pq}(X_0)+\de_{p+1,q-1}(X)-\de_{p,q}(X).$$
Indeed, summing up these inequalities, we would get
$$\dim E_2(X)\le \dim H_{dR}(X_0)=\dim E_\infty(X),$$
hence the spectral sequence degenerates at $E_2$ and the above inequalites are in fact equalities.

Since the natural map $(E_2^{pq})_+\to H^q(\Om^p_{X_0})$ factors through a quotient of $H^q(\Om^p_X)$, it has corank $\ge \de_{p,q}(X)$. Thus, it
suffices to show that
$$\dim \ker((E_2^{pq})_+\to H^q(\Om^p_{X_0}))\le \de_{p+1,q-1}(X).$$
The exact sequence
$$0\to F^2(\Om^p_X)_+\to (\Om^p_X)_+\to \Om^p_{X_0}\to 0$$
gives rise to the surjection
$$K:=\ker(H^q(F^2(\Om^p_X)_+)\to H^q((\Om^{p+1}_X)_+))/\im H^q((F^2\Om^{p-1}_X)_+)\to \ker((E_2^{pq})_+\to H^q(\Om^p_{X_0})).$$
Let us consider the morphism of exact sequences
\begin{diagram}
0&\rTo{}& F^2(\Om^p_X)_+&\rTo{}& (\Om^p_X)_+&\rTo{}& \Om^p_{X_0}&\rTo{}& 0\\
&&\dTo{d}&&\dTo{d}&&\dTo{d}\\
0&\rTo{}& F^2(\Om^{p+1}_X)_+ &\rTo{}& (\Om^{p+1}_X)_+ &\rTo{}& \Om^{p+1}_{X_0} &\rTo{}& 0
\end{diagram}
Using exactness of the sequence
$$H^q(F^2(\Om^{p-1}_X)_+)\to H^q(F^2(\Om^p_X)_+)\to H^q(F^2(\Om^{p+1}_X)_+),$$
which follows from Proposition \ref{n2-odd-thm}, 
we get an embedding
$$K\hra \ker(H^q(F^2(\Om^{p+1}_X)_+)\to H^q((\Om^{p+1}_X)_+))\simeq \coker(H^{q-1}((\Om^{p+1}_X)_+)\to H^{q-1}(\Om^{p+1}_{X_0}))$$
so $\dim K\le \de_{2,q-1}(X)$.
\end{proof}

\begin{cor}
If $X$ has dimension $n|2$ then $E_2^{pq}=0$ for $p>n$ (and clearly for $q>n$).
\end{cor}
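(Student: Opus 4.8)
The plan is to deduce both vanishing statements from facts already established, with essentially no new computation. For the parenthetical claim $E_2^{pq}(X)=0$ when $q>n$, I would observe that the underlying topological space of $X$ is that of $X_0$, a Noetherian scheme of dimension $n$, while each $\Om^p_X$ is a coherent $\OO_{X_0}$-module (since $\OO_X$ is finite over $\OO_{X_0}$, the $\NN$-adic filtration having only finitely many nonzero steps). Grothendieck's vanishing theorem then gives $E_1^{pq}(X)=H^q(X,\Om^p_X)=0$ for $q>n$, hence a fortiori $E_2^{pq}(X)=0$ in that range.

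For the case $p>n$, I would plug into the dimension formula of Theorem \ref{n2-dim-thm}(iii),
$$\dim E_2^{pq}(X)=h^{pq}(X_0)+\de_{p+1,q-1}(X)-\de_{p,q}(X).$$
Since $X_0$ is smooth of dimension $n$, the sheaf $\Om^1_{X_0}$ is locally free of rank $n$, so $\Om^j_{X_0}=0$ for all $j>n$; therefore $h^{jq}(X_0)=0$, and, as $\de_{j,q}(X)=\dim\coker\bigl(H^q(\Om^j_X)\to H^q(\Om^j_{X_0})\bigr)$ with $\Om^j_{X_0}=0$, also $\de_{j,q}(X)=0$ for every $j>n$ and every $q$. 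Applying this with $j=p$ and with $j=p+1$ (both $>n$ when $p>n$), every term on the right-hand side of the formula vanishes, so $\dim E_2^{pq}(X)=0$.

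I expect no genuine obstacle here: the corollary is pure bookkeeping built on Theorem \ref{n2-dim-thm}, the vanishing of coherent cohomology above the dimension, and the vanishing of top exterior powers of $\Om^1_{X_0}$. The only point deserving a little care is that the dimension formula by itself does not settle the boundary value $q=n+1$, since the term $\de_{p+1,n}(X)$ need not vanish; this is precisely why the range $q>n$ is handled separately via Grothendieck vanishing rather than through the formula.
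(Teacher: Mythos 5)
Your proof is correct and is essentially the argument the paper intends (the corollary is stated without proof, immediately after Theorem \ref{n2-dim-thm}): for $p>n$ all three terms in the formula of part (iii) vanish because $\Om^j_{X_0}=0$ for $j>n$, and for $q>n$ one uses Grothendieck vanishing on the $n$-dimensional underlying space. Your closing remark about why $q=n+1$ is better handled by vanishing of $E_1^{pq}$ than by the dimension formula is a sensible point of care.
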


Our next goal is to get a lower bound on $\de_{1q}(X)$ (see Proposition \ref{de-mu-prop} below).
Set $L:=\NN^2$ viewed as a line bundle on $X_0$. 
Let $Y$ denote the even scheme with the same underlying space as $X_0$ and with the structure sheaf $\OO_Y=(\OO_X)_+$. Then $\OO_Y$ is a square zero extension of $\OO_{X_0}$ by $L$, so it corresponds to a class 
$e\in H^1(T_{X_0}\ot L)$.

\begin{lemma}\label{Om-Y-X0-lem}
One has a natural exact sequence of coherent sheaves on $X_0$,
\begin{equation}\label{Om1-extension}
0\to L\rTo{\iota} \Om^1_Y|_{X_0}\to \Om^1_{X_0}\to 0
\end{equation}
and the corresponding extension class corresponds to $e$ under the natural isomorphism $\Ext^1(\Om^1_{X_0},L)\simeq H^1(T_{X_0}\ot L)$.
\end{lemma}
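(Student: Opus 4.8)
The plan is to construct the exact sequence \eqref{Om1-extension} directly from the conormal/differentials exact sequence for the closed immersion $i:X_0\hookrightarrow Y$, which is defined by the square-zero ideal $L=\NN^2\subset \OO_Y$, and then to identify its class with $e$ by unwinding the standard description of both extension classes in terms of local splittings. First I would note that since $L^2=0$ inside $\OO_Y$, the ideal $L$ is already an $\OO_{X_0}$-module, and the conormal sheaf of $i$ is just $L/L^2=L$. The fundamental exact sequence of Kähler differentials for $X_0\hookrightarrow Y$ then reads
$$L=L/L^2\xrightarrow{\ \delta\ }\Om^1_Y\ot_{\OO_Y}\OO_{X_0}=\Om^1_Y|_{X_0}\to \Om^1_{X_0}\to 0,$$
where $\delta(\ell)=d\ell\ot 1$. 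So the only substantive point is left-exactness, i.e. injectivity of $\iota:=\delta$; this is exactly the assertion that $i$ is a "nice" (quasi-)lci-type immersion, and it holds here because $\OO_Y$ is a square-zero extension of the \emph{smooth} $k$-algebra $\OO_{X_0}$: locally $\OO_Y\cong \OO_{X_0}\oplus L$ as an algebra (the extension splits Zariski-locally since $\Ext^1$ of a smooth algebra with coefficients in any module vanishes locally — equivalently, $H^1(T_{X_0}\ot L)$ has no local obstruction), and for such a trivial square-zero extension one computes $\Om^1_Y|_{X_0}\cong \Om^1_{X_0}\oplus L$ with $\delta$ the inclusion of the second summand. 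Hence $\iota$ is injective, giving \eqref{Om1-extension}.

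Next I would identify the extension class. Both $e\in H^1(T_{X_0}\ot L)\cong H^1(\mathcal{H}om(\Om^1_{X_0},L))$ and the class of \eqref{Om1-extension} in $\Ext^1(\Om^1_{X_0},L)$ are computed by a Čech $1$-cocycle obtained from local trivializations. Choose an affine cover $\{U_\alpha\}$ of $X_0$ on which $\OO_Y$ is split, with algebra sections $s_\alpha:\OO_{X_0}|_{U_\alpha}\to \OO_Y|_{U_\alpha}$; by definition $e$ is represented by the cocycle $(s_\beta-s_\alpha)$, which is a derivation $\OO_{X_0}\to L$ on $U_{\alpha\beta}$, i.e. a section of $T_{X_0}\ot L$. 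On the other hand each $s_\alpha$ produces a local algebra splitting, hence a local splitting $\sigma_\alpha:\Om^1_{X_0}\to \Om^1_Y|_{X_0}$ of \eqref{Om1-extension} (namely $\sigma_\alpha(df)=d(s_\alpha f)\ot 1$), and the class of the extension is represented by $(\sigma_\beta-\sigma_\alpha)\in \mathcal{H}om(\Om^1_{X_0},L)$. A direct check shows $(\sigma_\beta-\sigma_\alpha)(df)=d((s_\beta-s_\alpha)f)$, which is exactly the image of the derivation $s_\beta-s_\alpha$ under the identification $T_{X_0}\ot L=\mathcal{H}om(\Om^1_{X_0},L)$. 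Thus the two Čech cocycles coincide and the classes agree.

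The main obstacle is really just bookkeeping: making sure the isomorphism $\Ext^1(\Om^1_{X_0},L)\simeq H^1(T_{X_0}\ot L)$ is the \emph{same} one (local-to-global $\Ext$ spectral sequence, using that $\Om^1_{X_0}$ is locally free so local $\Ext^{>0}$ vanish) and that the sign/normalization conventions in "the class of a square-zero extension" match those in "the class of the differentials extension" — this is the classical fact that the square-zero extension $\OO_Y$ of $\OO_{X_0}$ and the sequence $0\to L\to \Om^1_Y|_{X_0}\to\Om^1_{X_0}\to 0$ carry the same class, which one can cite from deformation theory (e.g. the cotangent complex formalism: the truncated cotangent complex of $X_0\hookrightarrow Y$ is $[L\to \Om^1_Y|_{X_0}]$, and the extension class of $\OO_Y$ in $\Ext^1(L_{X_0},L)=\Ext^1(\Om^1_{X_0},L)$ is by definition the class of this two-term complex). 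I would phrase the proof to lean on this known equivalence rather than re-deriving it, so the only thing genuinely checked is left-exactness of \eqref{Om1-extension}, which follows from local splitness over the smooth base $X_0$ as above.
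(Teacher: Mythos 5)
Your proof is correct and follows essentially the same route as the paper's: construct \eqref{Om1-extension} as the conormal/cotangent sequence of $X_0\hra Y$, and identify its class with $e$ by comparing the \v{C}ech cocycles $\si_j-\si_i$ and the induced differences of splittings of \eqref{Om1-extension}. The only (harmless) difference is in the justification of left-exactness: you split the square-zero extension Zariski-locally over the smooth base $X_0$ and compute $\Om^1_Y|_{X_0}\simeq\Om^1_{X_0}\oplus L$ directly, whereas the paper realizes $Y$ locally as a Cartier divisor in $X_0\times\A^1$; both are valid local arguments.
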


\begin{proof} This exact sequence is standard: its exactness on the left follows from the fact that locally $Y$ is a Cartier divisor in $X_0\times \A^1$.
Let $(U_i)$ be an open affine covering of $X_0$ and let $\si_i:\OO_{X_0}\to \OO_Y$ be splittings defined over $U_i$, so that
the class of $e$ is represented by the cocylce $v_{ij}\in T_{X_0}\ot L(U_{ij})$, such that
$$(\si_j-\si_i)(f)=\iota\lan v_{ij},df\ran.$$
We have the induced 
splittings over $U_i$,
$$d\si_i:\Om^1_{X_0}\to \Om^1_Y|_{X_0}: fdg\mapsto fd\si_i(g)|_{X_0}.$$
Now the cocycle $d\si_j-d\si_i\in (\Om^1)^\vee\ot L(U_{ij})$ represents the class of the extension \eqref{Om1-extension}.
It remains to observe that
$$\lan d\si_j-d\si_i,dg\ran=d(\si_j(g)-\si_i(g))=\iota\lan v_{ij},df\ran,$$
so this cocycle coincides with $v_{ij}$.
\end{proof}

\begin{lemma}\label{Om-X-Y-X0-lem}
(i) One has a canonical decomposition
$$(\Om^1_X)_+/(\NN^2\ot\Om^1_{X_0})\simeq \Om^1_Y|_{X_0}\oplus S^2(\NN/\NN^2).$$

\noindent
(ii) For any $q\ge 0$, one has an embedding
$$\im(H^q(\Om^1_X)\to H^q(\Om^1_{X_0}))\sub \im(H^q(\Om^1_Y|_{X_0})\to H^q(\Om^1_{X_0})).$$
\end{lemma}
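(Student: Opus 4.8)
The plan is to prove (i) by building two natural maps into $M:=(\Om^1_X)_+/(\NN^2\ot\Om^1_{X_0})$ — one out of $\Om^1_Y|_{X_0}$, one out of $S^2(\NN/\NN^2)$ — and checking in local coordinates that their sum is an isomorphism; (ii) is then formal. For the second map I would start from the even part of $\Om^1_X\to\Om^1_{X_0}$, which by the proof of Theorem \ref{n2-dim-thm} gives $0\to F^2(\Om^1_X)_+\to(\Om^1_X)_+\to\Om^1_{X_0}\to 0$ with $F^2(\Om^1_X)_+=\ker((\Om^1_X)_+\to\Om^1_{X_0})\supseteq(\NN^2\Om^1_X)_+=\NN^2\ot\Om^1_{X_0}$ (a local check). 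Since $\dim X=n|2$, we have $\NN^3=0$ and $\NN=(\OO_X)_-\oplus\NN^2$ with $(\OO_X)_-\rTo{\sim}\NN/\NN^2$, so each class $\ov n\in\NN/\NN^2$ has a canonical odd lift $n\in(\OO_X)_-$. The rule $\ov n\ot\ov m\mapsto[\,n\,dm\,]$ is then a well-defined $\OO_{X_0}$-bilinear map $\NN/\NN^2\ot_{\OO_{X_0}}\NN/\NN^2\to F^2(\Om^1_X)_+/(\NN^2\ot\Om^1_{X_0})$ — every ambiguity lands in $\NN^2\cdot\NN=0$ or in $(\NN^2\Om^1_X)_+$ — and a comparison of local bases shows it is an isomorphism. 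Its antisymmetrization is $\ov n\we\ov m\mapsto[\,d(nm)\,]$, the composite of the canonical isomorphism ${\bigwedge}^2(\NN/\NN^2)\simeq\NN^2=:L$ with the map $d\colon L\to M$. Splitting $\NN/\NN^2\ot\NN/\NN^2=S^2\oplus{\bigwedge}^2$ (using that $2$ is invertible), I obtain a canonical injection $\beta\colon S^2(\NN/\NN^2)\hra M$, $\ov n\cdot\ov m\mapsto\frac12([\,n\,dm\,]+[\,m\,dn\,])$, whose image is complementary, inside $F^2(\Om^1_X)_+/(\NN^2\ot\Om^1_{X_0})$, to the image of $d\colon L\to M$.

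For the map out of $\Om^1_Y|_{X_0}$: the ring inclusion $\OO_Y=(\OO_X)_+\hra\OO_X$ makes $d_X$ a $k$-derivation $\OO_Y\to(\Om^1_X)_+$, hence an $\OO_Y$-linear map $\phi\colon\Om^1_Y\to(\Om^1_X)_+$; since $\phi(\NN^2\Om^1_Y)\subseteq\NN^2(\Om^1_X)_+=\NN^2\ot\Om^1_{X_0}$ (local check), $\phi$ descends to $\ov\phi\colon\Om^1_Y|_{X_0}=\Om^1_Y/\NN^2\Om^1_Y\to M$, and one checks that $\ov\phi\circ\iota$ is the map $d\colon L\to M$ (with $\iota$ as in Lemma \ref{Om-Y-X0-lem}) while $\ov\phi$ followed by $M\to\Om^1_{X_0}$ is the projection of Lemma \ref{Om-Y-X0-lem}. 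Assertion (i) is then $\ov\phi\oplus\beta\colon\Om^1_Y|_{X_0}\oplus S^2(\NN/\NN^2)\rTo{\sim}M$, which I would verify in coordinates $x_1,\dots,x_n,\th_1,\th_2$: there $M$ is $\OO_{X_0}$-free on $\{[dx_i]\}_i\cup\{[\th_k\,d\th_l]\}_{k,l}$; $\ov\phi$ hits $[dx_i]$ and $[\th_1\,d\th_2-\th_2\,d\th_1]$ (the images of the local basis $dx_i,\ d(\th_1\th_2)$ of $\Om^1_Y|_{X_0}$); $\beta$ hits $[\th_1\,d\th_1],\ [\th_2\,d\th_2],\ [\th_1\,d\th_2+\th_2\,d\th_1]$; and these $n+4$ elements form an $\OO_{X_0}$-basis (again using that $2$ is invertible, to separate $[\th_1\,d\th_2]$ from $[\th_2\,d\th_1]$).

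Part (ii) is then formal: $\Om^1_{X_0}$ is purely even, so $H^q(\Om^1_X)\to H^q(\Om^1_{X_0})$ factors through $H^q((\Om^1_X)_+)$, which in turn factors through $H^q(M)$ via the surjection $(\Om^1_X)_+\twoheadrightarrow M$; by (i) we have $H^q(M)=H^q(\Om^1_Y|_{X_0})\oplus H^q(S^2(\NN/\NN^2))$, and under this identification $H^q(M)\to H^q(\Om^1_{X_0})$ is $H^q$ of the projection of Lemma \ref{Om-Y-X0-lem} on the first summand and $0$ on the second (since $M\to\Om^1_{X_0}$ composed with $\ov\phi$ is that projection and composed with $\beta$ is $0$). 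Hence $\im(H^q(\Om^1_X)\to H^q(\Om^1_{X_0}))\sub\im(H^q(\Om^1_Y|_{X_0})\to H^q(\Om^1_{X_0}))$.

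The only real work is the local bookkeeping in (i): keeping parities straight and tracking which of the various a priori ambiguities vanish outright (through $\NN^3=0$) and which are absorbed into the quotient by $\NN^2\ot\Om^1_{X_0}$. Everything is organized by the single observation that $F^2(\Om^1_X)_+/(\NN^2\ot\Om^1_{X_0})$ is canonically $\NN/\NN^2\ot\NN/\NN^2$, whose symmetric part splits off as $S^2(\NN/\NN^2)$ while its antisymmetric part $L$ fuses with $\Om^1_{X_0}$ back into $\Om^1_Y|_{X_0}$; once this is in place, both (i) and (ii) are routine.
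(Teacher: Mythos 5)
Your proposal is correct and follows essentially the same route as the paper: the two maps into $(\Om^1_X)_+/(\NN^2\ot\Om^1_{X_0})$ are exactly the paper's (the pull-back from $\Om^1_Y$ and, up to the harmless factor $\tfrac12$, the symmetrized map $a\ot b\mapsto a\,db+b\,da$), the isomorphism is checked in local coordinates, and part (ii) is deduced from the factorization of $(\Om^1_X)_+\to\Om^1_{X_0}$ through this quotient. You merely flesh out the paper's ``one can check using local coordinates'' step, organizing it via the identification of $F^2(\Om^1_X)_+/(\NN^2\ot\Om^1_{X_0})$ with $\NN/\NN^2\ot\NN/\NN^2$, which is a fine (and correct) elaboration rather than a different argument.
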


\begin{proof}
(i) The pull-back map $\Om^1_Y\to (\Om^1_X)_+$ induces an $\OO_{X_0}$-linear map $\Om^1_Y|_{X_0}\to (\Om^1_X)_+/(\NN^2\ot\Om^1_{X_0})$.
On the other hand, the map 
$$\NN_-\ot \NN_-\to (\Om^1_X)_+: a\ot b\mapsto adb+bda$$
induces an $\OO_{X_0}$-linear map $S^2(\NN/\NN^2)\to (\Om^1_X)_+/(\NN^2\ot\Om^1_{X_0})$.
One can check using local coordinates that these maps are embeddings and give the claimed decomposition.

\noindent
(ii) Since the map $(\Om^1_X)_+\to \Om^1_{X_0}$ factors through the quotient $(\Om^1_X)_+/(\NN^2\ot\Om^1_{X_0})$, this follows from
the decomposition proved in part (i).
\end{proof}

\begin{prop}\label{de-mu-prop} 
Consider the cup-product map 
$$\mu^q_e:H^q(\Om^1_{X_0})\rTo{\cup e} H^{q+1}(L).$$
Then one has
$$\de_{1q}(X)\ge \rk(\mu^q_e).$$
\end{prop}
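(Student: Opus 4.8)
The plan is to bound $\de_{1q}(X)$ from below by the corank of the map induced on $H^q$ by the pull-back $\Om^1_Y|_{X_0}\to(\Om^1_X)_+$, and then to identify that corank with $\rk(\mu^q_e)$ via the long exact cohomology sequence attached to the extension \eqref{Om1-extension}.

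First I would invoke Lemma \ref{Om-X-Y-X0-lem}(ii), which gives
$$\im(H^q(\Om^1_X)\to H^q(\Om^1_{X_0}))\sub \im(H^q(\Om^1_Y|_{X_0})\to H^q(\Om^1_{X_0})).$$
Since $\de_{1q}(X)=\dim\coker(H^q(\Om^1_X)\to H^q(\Om^1_{X_0}))$, this inclusion of images yields
$$\de_{1q}(X)\ge \dim\coker\bigl(H^q(\Om^1_Y|_{X_0})\to H^q(\Om^1_{X_0})\bigr),$$
so it suffices to show that the right-hand side equals $\rk(\mu^q_e)$.

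Next I would pass to the long exact cohomology sequence of the short exact sequence \eqref{Om1-extension} of Lemma \ref{Om-Y-X0-lem}:
$$\cdots\to H^q(\Om^1_Y|_{X_0})\to H^q(\Om^1_{X_0})\rTo{\pa} H^{q+1}(L)\to\cdots.$$
The image of $H^q(\Om^1_Y|_{X_0})$ in $H^q(\Om^1_{X_0})$ is precisely $\ker(\pa)$, so $\coker(H^q(\Om^1_Y|_{X_0})\to H^q(\Om^1_{X_0}))\simeq\im(\pa)$, of dimension $\rk(\pa)$. It then remains to identify $\pa$ with $\mu^q_e$: by Lemma \ref{Om-Y-X0-lem} the class of \eqref{Om1-extension} in $\Ext^1(\Om^1_{X_0},L)$ corresponds to $e$ under $\Ext^1(\Om^1_{X_0},L)\simeq H^1(T_{X_0}\ot L)$, and the connecting homomorphism of a short exact sequence is (up to sign) the Yoneda product with its extension class, which under this identification is exactly the cup product with $e\in H^1(T_{X_0}\ot L)$ followed by the contraction $\Om^1_{X_0}\ot T_{X_0}\to\OO_{X_0}$, i.e.\ $\mu^q_e$. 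Hence $\dim\coker(H^q(\Om^1_Y|_{X_0})\to H^q(\Om^1_{X_0}))=\rk(\mu^q_e)$, which combined with the displayed inequality gives $\de_{1q}(X)\ge\rk(\mu^q_e)$.

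The argument is essentially formal, so there is no real obstacle; the only point deserving care is the compatibility between the connecting-homomorphism description of $\pa$ and the cup-product formula $\mu^q_e=\cup\,e$. I would make this transparent by a Cech cocycle computation using the local splittings $d\si_i$ of \eqref{Om1-extension} already written down in the proof of Lemma \ref{Om-Y-X0-lem}, where the connecting cocycle for a class of $H^q(\Om^1_{X_0})$ is exactly the one obtained by contracting against $v_{ij}$.
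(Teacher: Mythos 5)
Your proposal is correct and follows the same route as the paper: reduce to the corank of $H^q(\Om^1_Y|_{X_0})\to H^q(\Om^1_{X_0})$ via Lemma \ref{Om-X-Y-X0-lem}(ii), then identify that cokernel with $\im(\mu^q_e)$ using the long exact sequence of \eqref{Om1-extension} and the identification of its extension class with $e$ from Lemma \ref{Om-Y-X0-lem}. You merely spell out the connecting-homomorphism-equals-cup-product step that the paper leaves implicit.
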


\begin{proof}
By Lemma \ref{Om-X-Y-X0-lem}(ii), 
$$\de_{1q}(X)\ge \dim \coker(H^q(\Om^1_Y|_{X_0})\to H^q(\Om^1_{X_0})).$$
But by Lemma \ref{Om-Y-X0-lem}, we have an identification
$$\coker(H^q(\Om^1_Y|_{X_0})\to H^q(\Om^1_{X_0}))\simeq \im(\mu^q_e).$$
\end{proof}

\begin{cor} Assume that $X_0$ is a smooth projective curve of genus $g\ge 1$, and $L\simeq \om_{X_0}$. Then $\de_{10}(X)\ge 1$.
\end{cor}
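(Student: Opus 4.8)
The plan is to apply Proposition \ref{de-mu-prop} with $q=0$, so that it suffices to show $\rk(\mu^0_e)\ge 1$, i.e.\ that the cup-product map $\mu^0_e:H^0(\Om^1_{X_0})\to H^1(L)=H^1(\om_{X_0})$ induced by the extension class $e\in H^1(T_{X_0}\ot L)=H^1(T_{X_0}\ot\om_{X_0})=H^1(\OO_{X_0})$ is nonzero. Here I have used $L\simeq\om_{X_0}$ to identify $T_{X_0}\ot L$ with $\OO_{X_0}$, and Serre duality to note that all the spaces in question are $g$-dimensional (or, for $g=1$, at least one-dimensional), so there is room for a nonzero map.

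First I would record that under the identification $L\simeq\om_{X_0}$, the map $\mu^0_e$ becomes the cup-product pairing $H^0(\Om^1_{X_0})\ot H^1(\OO_{X_0})\to H^1(\om_{X_0})$, which is precisely the Serre duality pairing. This pairing is perfect, so $\mu^0_e$ is injective (hence nonzero, since $g\ge 1$ forces $H^0(\Om^1_{X_0})\ne 0$) \emph{provided} the class $e\in H^1(\OO_{X_0})$ is nonzero. So the crux is reduced to: the square-zero extension $\OO_Y=(\OO_X)_+$ of $\OO_{X_0}$ by $L=\NN^2$ is nontrivial, equivalently, $(\OO_X)_+$ does not split as $\OO_{X_0}\oplus\NN^2$ over $\OO_{X_0}$.

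The main obstacle is therefore to guarantee the nontriviality of $e$, and this is exactly where the hypothesis must really be used: I would need to assume that $X$ is \emph{not split} (or at least that this particular extension is nontrivial) — indeed if $X=\gr X$ is split then $(\OO_X)_+=\OO_{X_0}\oplus\La^2(\NN/\NN^2)=\OO_{X_0}\oplus L$ and $e=0$, so the corollary as literally stated needs the standing assumption that $X$ is a nonsplit supervariety with the indicated $X_0$ and $L$; one should read the corollary in that context (or simply add ``assume $e\neq 0$''). Granting $e\ne 0$, the argument above gives $\rk(\mu^0_e)=h^0(\Om^1_{X_0})=g\ge 1$, whence $\de_{10}(X)\ge\rk(\mu^0_e)\ge 1$ by Proposition \ref{de-mu-prop}, and in fact $\de_{10}(X)\ge g$. (For $g=1$ all spaces are $1$-dimensional and $\mu^0_e$ is the pairing of the two generators, nonzero iff $e\ne 0$, giving $\de_{10}(X)\ge 1$ on the nose.)

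Finally I would remark, for context, why this is the expected conclusion: combined with Theorem \ref{n2-dim-thm}(iii), $\de_{10}(X)\ge 1$ forces $\dim E_2^{01}(X)=h^{01}(X_0)+\de_{11}(X)-\de_{10}(X)$ to potentially drop and, more importantly, feeds into $\dim E_2^{00}$-type comparisons showing the Hodge-to-de Rham spectral sequence genuinely sees the nonsplitness of $X$; this is the mechanism behind the nondegeneracy phenomena discussed in the introduction.
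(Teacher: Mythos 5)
Your route is the paper's route: the corollary is an immediate application of Proposition \ref{de-mu-prop} with $q=0$, identifying $\mu^0_e$ with the Serre-duality pairing $H^0(\om_{X_0})\ot H^1(\OO_{X_0})\to H^1(\om_{X_0})$ evaluated against $e\in H^1(T_{X_0}\ot L)\simeq H^1(\OO_{X_0})$. Your observation that the statement implicitly requires $e\neq 0$ is also correct and worth making explicit: for a split $X$ with $\NN/\NN^2=\OO\oplus\om_{X_0}$ one has $L\simeq\om_{X_0}$ yet $\de_{10}(X)=0$, so the corollary must be read with the extension class of $(\OO_X)_+$ assumed nontrivial.

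There is, however, a genuine error in your quantitative claims. The target of $\mu^0_e$ is $H^1(L)=H^1(\om_{X_0})$, which is \emph{one}-dimensional; so $\mu^0_e$ is the single linear functional $\langle\cdot\,,e\rangle$ on $H^0(\Om^1_{X_0})$ and $\rk(\mu^0_e)\le 1$ always. It is not injective for $g\ge 2$, and the assertions ``$\rk(\mu^0_e)=g$'' and ``in fact $\de_{10}(X)\ge g$'' do not follow (and are not what perfect duality gives). What perfectness of the pairing does give is exactly what is needed: if $e\neq 0$ then the functional $\langle\cdot\,,e\rangle$ is nonzero, so $\rk(\mu^0_e)=1$ and $\de_{10}(X)\ge 1$. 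Since the corollary only claims $\ge 1$, your proof survives once these overstatements are deleted. (The closing remark also mis-indexes Theorem \ref{n2-dim-thm}(iii): for $(p,q)=(0,1)$ the formula reads $\dim E_2^{01}=h^{01}(X_0)+\de_{1,0}(X)-\de_{0,1}(X)$ with $\de_{0,1}=0$, not $\de_{11}-\de_{10}$.)
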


\begin{example}\label{Gr-ex}
Consider $X=G(1|1,2|2)$. Then $X_0=\P^1\times \P^1$, and $\NN/\NN^2\simeq \OO(-1,-1)^{\oplus 2}$, $\NN^2\simeq \om_{X_0}=\OO(-2,-2)$.
By Proposition \ref{de-mu-prop}, we have 
$$\de_{11}(X)\ge \rk(\mu^1_e:H^1(\Om^1_{X_0})\to H^2(\Om^2_{X_0})).$$
It is well known that $e\in H^1(T_{X_0}\ot \om_{X_0})\simeq H^1(\Om^1_{X_0})$ is nonzero, hence $\mu^1_e\neq 0$, and we get $\de_{11}(X)\ge 1$.
On the other hand, let $\BB$ denote the Berezinian of the tautological bundle on $X=G(1|1,2|2)$. Then $c_1(\BB)$ gives an element of $H^1(\Om^1_X)$
projecting to a nonzero class in $H^1(\Om^1_{X_0})$ (see e.g., \cite[Ch.\ 4.3]{Manin}). Hence, we have $\de_{11}(X)=1$, and so
$$\dim E_2^{11}(X)=\dim E_2^{02}(X)=1.$$

In fact, it is easy to see that $H^i(\Om_X^p)=0$ for $p\ge 2$ and $i=0,1$, which implies that
the only nonzero $E_2$ terms are
$$E_2^{00}=E_2^{02}=E_2^{11}=E_2^{22}=k.$$
\end{example}

\subsection{Example in dimension $1|3$}\label{13-dim-sec}

We will give an example showing that in dimension $1|3$ the map on $H^1$ induced by $d:(\OO_X)_-\to (\Om^1_X)_-$ may have a nontrivial kernel
(see Theorem \ref{nondeg-E2-thm} below). 

We start with a general construction of smooth supervarieties of dimension $1|3$.

\begin{lemma}\label{13-constr-lem}
Let $C$ be a smooth curve, $V$ a vector bundle of rank $3$ over $C$, 
\begin{equation}\label{sq-zero-ext-Y}
0\to {\bigwedge}^2 V\to \OO_Y\to \OO_C\to 0
\end{equation}
a square zero extension. Then there exists a smooth supervariety $X$ of dimension $1|3$ with $X_0=C$, $\NN/\NN^2=V$, and $(\OO_X)_+\simeq \OO_Y$
as a square zero extension of $\OO_C$.
\end{lemma}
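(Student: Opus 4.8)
The plan is to build $X$ locally and glue, using the data $(C,V,\OO_Y)$ to specify transition functions. First I would fix an affine open cover $(U_i)$ of $C$ trivializing $V$, say with local frames $e^{(i)}_1,e^{(i)}_2,e^{(i)}_3$ of $V|_{U_i}$, and transition matrices $g_{ij}\in \GL_3(\OO_C(U_{ij}))$; these induce transition cocycles for $\bigwedge^2 V$ (via $\bigwedge^2 g_{ij}$) and for the line bundle $\bigwedge^3 V$. On each $U_i$ set $X|_{U_i}:=\Spec$ of the free $\OO_Y(U_i)$-superalgebra on odd generators $\th^{(i)}_1,\th^{(i)}_2,\th^{(i)}_3$ modulo the relation identifying $\th^{(i)}_a\th^{(i)}_b$ with the image of $e^{(i)}_a\wedge e^{(i)}_b\in\bigwedge^2 V(U_i)\subset \OO_Y(U_i)$. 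Concretely $(\OO_X)_+|_{U_i}=\OO_Y(U_i)$ (the even part is exactly the square-zero extension, since products of two $\th$'s land in $\bigwedge^2 V\subset\OO_Y$ and products of three or more vanish as $\dim$ is $1|3$), and $(\OO_X)_-|_{U_i}=\OO_Y(U_i)\cdot\{\th^{(i)}_1,\th^{(i)}_2,\th^{(i)}_3\}\oplus \bigwedge^3 V(U_i)\cdot\th^{(i)}_1\th^{(i)}_2\th^{(i)}_3$-type terms; in any case it is a free $\OO_Y(U_i)$-module of the expected rank. This local model is smooth of dimension $1|3$ by construction.

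Next I would define the gluing. On overlaps $U_{ij}$ I want an isomorphism $\phi_{ij}$ of the two local models. The underlying map on $C$ is the identity, so $\phi_{ij}$ must be $\OO_C$-linear modulo $\NN$ and must reduce to the identity there; it is determined by where it sends the odd generators, namely $\phi_{ij}(\th^{(j)}_a)=\sum_b (g_{ij})_{ba}\,\th^{(i)}_b + (\text{higher odd terms})$, and for dimension $1|3$ the higher terms sit in $\bigwedge^2 V\cdot(\text{odd})$, i.e. are of the form $\sum_b (\bigwedge^2 g$-type coefficients$)\cdot(\th^{(i)}$-monomials of odd degree $3)$ — but degree-$3$ monomials are already the top, so actually the only correction is through the even part, where $\phi_{ij}$ on $(\OO_X)_+=\OO_Y$ is forced to be the chosen transition of $\OO_Y$ as an extension of $\OO_C$. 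The key compatibility to check is that $\phi_{ij}$ respects the defining relation $\th_a\th_b \mapsto e_a\wedge e_b$: this holds precisely because the coefficient cocycle of $\th_a\th_b$ transforms by $\bigwedge^2 g_{ij}$, which matches the transition of $\bigwedge^2 V$ inside $\OO_Y$. Then I would verify the cocycle condition $\phi_{ij}\phi_{jk}=\phi_{ik}$ on $U_{ijk}$: on $(\OO_X)_+$ this is the cocycle condition for the extension $\OO_Y$ (given), and on odd generators it follows from the cocycle condition $g_{ij}g_{jk}=g_{ik}$ for $V$. Hence the local pieces glue to a superscheme $X$, which is smooth, proper over $k$ if $C$ is, with $X_0=C$, and by construction $\NN/\NN^2\cong V$ and $(\OO_X)_+\cong\OO_Y$ as a square-zero extension of $\OO_C$.

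The main obstacle I anticipate is bookkeeping the odd part of the structure sheaf and the gluing maps cleanly in dimension $1|3$: unlike the $n|2$ case there is a degree-$3$ monomial $\th_1\th_2\th_3$, so $(\OO_X)_-$ is not simply $\NN/\NN^2$ but also involves $\bigwedge^3 V$ twisted by the odd generator of top degree, and one must make sure the chosen local models and transition isomorphisms are consistent with these extra terms without introducing hidden choices. Concretely, one should present $\OO_X$ as $\OO_Y\oplus (V\otimes\OO_Y\text{-module structure})\oplus\ldots$ carefully, or — cleaner — describe $\OO_X$ as a quotient of the relative exterior-type algebra $\OO_Y\langle\th_1,\th_2,\th_3\rangle$ by the ideal generated by $\th_a\th_b - (e_a\wedge e_b)$, and check this ideal is globally well-defined (it is, since $\bigwedge^2 g_{ij}$ intertwines the local presentations). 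The second, minor point is to confirm that the even part of this quotient is \emph{exactly} $\OO_Y$ and not a further quotient or extension, which is where the square-zero hypothesis $(\bigwedge^2 V)\cdot(\bigwedge^2 V)=0$ in $\OO_Y$ is used. Everything else — smoothness, properness, the identifications of $\NN/\NN^2$ and $(\OO_X)_+$ — is then immediate from the local description.
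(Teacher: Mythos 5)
Your overall strategy (glue local models over a trivializing cover) is the same as the paper's, but there is a genuine gap at the single point where the paper does real work: the claim that on the odd generators ``the only correction is through the even part'' and that the cocycle condition ``follows from $g_{ij}g_{jk}=g_{ik}$.'' Your transition map $\phi_{ij}(\th^{(j)}_a)=\sum_b (g_{ij})_{ba}\th^{(i)}_b$ is not actually well defined as written: the entries of $g_{ij}$ live in $\OO_C(U_{ij})$, but to multiply them against $\th^{(i)}_b$ you must lift them to $\OO_Y(U_{ij})$, and two lifts differ by sections of $\bigwedge^2 V$, which act nontrivially on the odd part (landing in $\bigwedge^3V=\det V$). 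So each $\phi_{ij}$ depends on a choice, well defined only up to the gauge group $\und{\Hom}(V,\det V)$. Consequently $\phi_{ij}\phi_{jk}$ and $\phi_{ik}$ agree on $\OO_Y$ and on $\NN/\NN^2$ but differ on odd generators by a $2$-cochain with values in $\und{\Hom}(V,\det V)$ (concretely, built from $v_{ij}(g_{jk})$, where $v_{ij}\in T_C\ot\bigwedge^2V$ is the cocycle of the extension $\OO_Y$ --- it vanishes only if $\OO_Y$ is split or the lifts are chosen coherently). Whether the choices can be adjusted to kill this discrepancy is an obstruction problem in $H^2(C,\und{\Hom}(V,\det V))$; it is not a formal consequence of the cocycle conditions for $V$ and $\OO_Y$.

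This is exactly what the paper's proof supplies: it rewrites the correction on odd generators as a first-order differential operator $\phi_{12}:V\to\det(V)$ whose symbol is forced (by the requirement that the gluing be a ring homomorphism compatible with $f\mapsto f+v_{12}(f)$ on the even part) to equal $v_{12}$, and the existence of such a lift of $e$ to $H^1(D_{\le 1}(V,\det V))$ uses $H^2(C,\und{\Hom}(V,\det V))=0$, i.e.\ the fact that $C$ is a curve; a two-element cover then makes the cocycle condition vacuous. You actually flagged the right worry at the end (``without introducing hidden choices'') but then declared everything immediate. To repair your argument you must either carry out this obstruction analysis (and invoke $H^2=0$ on a curve), or restrict at the outset to a two-element affine cover trivializing $V$, for which no triple overlaps occur and any choice of lifts works. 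Everything else in your write-up (the local model, the preservation of the relations $\th_a\th_b=e_a\we e_b$, the identifications of $(\OO_X)_+$ and $\NN/\NN^2$) is fine.
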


\begin{proof}
Let $e\in H^1(T_C\ot{\bigwedge}^2 V)$ be the class of the extension \eqref{sq-zero-ext-Y}. Note that we have a natural isomorphism 
$${\bigwedge}^2 V\simeq V^\vee\ot \det(V)\simeq\und{\Hom}(V,\det(V)).$$
Let $D_{\le 1}(V,\det(V))$ denote the sheaf of differential operators of order $\le 1$ from $V$ to $\det(V)$. Then we have an exact sequence
$$0\to \und{\Hom}(V,\det(V))\to D_{\le 1}(V,\det(V))\rTo{\si} T_C\ot \und{\Hom}(V,\det(V))\to 0,$$
where $\si$ is the symbol map. Since $H^2(C,\und{\Hom}(V,\det(V)))=0$, we can lift $e$ to a class 
$\wt{e}\in H^1(C, D_{\le 1}(V,\det(V)))$. Let us represent $\wt{e}$ by a Cech cocycle with respect to an affine covering $C=(U_1,U_2)$,
$\phi_{12}:V|_{U_{12}}\to \det(V)|_{U_{12}}$. By assumption, it satisfies
$$\phi_{12}(f s)=f\phi_{12}(s)+v_{12}(f)\we s,$$
for $f\in \OO$, $s\in V$, where $v_{12}\in H^0(U_{12},T_C\ot {\bigwedge}^2 V)$ is the Cech cocycle representing $e$.

Now we are going to glue $X$ from the split supervarieties over $U_i$, $i=1,2$, corresponding to $\bigwedge^\bullet(V|_{U_i})$.
We just need to construct an automorphism $\a$ of the sheaf of $\Z/2$-graded rings $\bigwedge^\bullet(V|_{U_{12}})$ inducing the identity
modulo $\bigwedge^{\ge 1}$. It is enough to specify how $\a$ acts on $\OO$ and on $V$. We set for $f\in \OO$, $s\in V$,
$$\a(f)=f+v_{12}(f), \ \ \a(s)=s+\phi_{12}(s).$$
It is easy to check that this gives a well defined automorphism of $\bigwedge^\bullet(V|_{U_{12}})$,
and thus, defines the supervariety $X$ with the required properties.
\end{proof}

\begin{theorem}\label{nondeg-E2-thm}
Assume that the ground field $k$ is algebraically closed field of characteristic $\neq 2,3$.
For any smooth projective curve $C$ of genus $2$ over $k$, 
there exists a smooth supervariety $X$ of dimension $1|3$ with $X_0=C$, and such that 
$$E_2^{01}(X)_-=\ker(H^1(X,(\OO_X)_-)\to H^1(X,(\Om^1_X)_-))\neq 0.$$
In particular, the Hodge to de Rham spectral sequence of $X$ does not degenerate at $E_2$.
\end{theorem}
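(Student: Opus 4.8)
The plan is to use the construction of Lemma~\ref{13-constr-lem}. Fix a smooth projective curve $C$ of genus $2$ over $k$, choose a rank $3$ bundle $V$ on $C$ and a square-zero extension class $e\in H^1(T_C\ot{\bigwedge}^2V)$, and let $X$ be the resulting smooth supervariety of dimension $1|3$ with $X_0=C$ and $\NN/\NN^2\simeq V$. Everything then reduces to computing, as a function of $(C,V,e)$, the dimension of
$$E_2^{01}(X)_-=\ker\bigl(H^1(X,(\OO_X)_-)\xrightarrow{\ d\ }H^1(X,(\Om^1_X)_-)\bigr),$$
and exhibiting a choice for which it is positive. Since $X$ has no nonzero odd locally constant functions, $d\colon(\OO_X)_-\to(\Om^1_X)_-$ is injective as a map of Zariski sheaves of $k$-vector spaces; writing $Q$ for its cokernel, the long exact cohomology sequence gives
$$\dim E_2^{01}(X)_-=h^0(Q)-h^0\bigl((\Om^1_X)_-\bigr)+h^0\bigl((\OO_X)_-\bigr).$$

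The second step is to make this computable on $C$. Each of $(\OO_X)_-$, $(\Om^1_X)_-$ and $Q$ carries the $\NN$-adic filtration $F^\bullet$ from the proof of Theorem~\ref{deR-thm} (for $Q$, its image), and in odd dimension $3$ only the weights $1$ and $3$ occur; using the identification of $\gr_F\Om^\bullet_X$ (up to a further two-step filtration) with $\Om^\bullet_{X_0}\ot K(\NN/\NN^2)$ one reads off all the graded pieces over $C$ in terms of $V$, ${\bigwedge}^2V$, $\det V$ and $\Om^1_C$: for instance $\gr^1(\OO_X)_-=V$ and $\gr^3(\OO_X)_-=\det V$; the weight $1$ quotient $W_1:=(\Om^1_X)_-/F^3(\Om^1_X)_-$ is an extension of $V$ by $\Om^1_C\ot V$ (the first jet bundle of $V$); and $F^3(\Om^1_X)_-$ is an extension of ${\bigwedge}^2V\ot V$ by $\Om^1_C\ot\det V$, with $d|_{\det V}\colon\det V=\gr^3(\OO_X)_-\to F^3(\Om^1_X)_-$ having components the universal derivation $\nabla_0\colon\det V\to\Om^1_C\ot\det V$ and the Koszul map $\kappa\colon\det V={\bigwedge}^3V\hra{\bigwedge}^2V\ot V$. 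Here one uses $\operatorname{char}k\neq3$: the composite of $\kappa$ with the wedge product ${\bigwedge}^2V\ot V\to{\bigwedge}^3V$ is multiplication by $3$, so $\kappa$ is a split injection. Now take $V$ a sum of three line bundles of negative degree chosen so that $H^0(V)=0$ while $H^0(\Om^1_C\ot V)\neq0$ (on a genus $2$ curve this is easy, e.g. each summand of degree $-1$ but with a section after twisting by $\om_C$); then $H^0(\det V)$, $H^0({\bigwedge}^2V\ot V)$ and $H^0(\Om^1_C\ot\det V)$ all vanish, and a diagram chase through the weight-filtration sequences and the sequences relating $Q$ to $(\Om^1_X)_-$ and $(\OO_X)_-$ collapses the formula above to
$$\dim E_2^{01}(X)_-=\dim\Bigl(\im\bigl(\delta\colon H^0(W_1)\to H^1(F^3(\Om^1_X)_-)\bigr)\cap\im\bigl((d|_{\det V})_*\colon H^1(\det V)\to H^1(F^3(\Om^1_X)_-)\bigr)\Bigr),$$
where $\delta$ is the connecting map of $0\to F^3(\Om^1_X)_-\to(\Om^1_X)_-\to W_1\to0$ and the second map is injective because $\kappa_*$ is. In words, $E_2^{01}(X)_-$ measures the gap between the obstruction to lifting a global section of $W_1$ to a global odd $1$-form and the obstruction to lifting it only modulo exact odd $1$-forms.

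It then remains to choose $e$ so that this intersection is nonzero, i.e. so that some nonzero $\om\in H^0(W_1)=H^0(\Om^1_C\ot V)$ has $\delta(\om)\neq0$ while $\delta(\om)$ lies in the image of $(d|_{\det V})_*$. Unwinding the explicit gluing of Lemma~\ref{13-constr-lem} expresses $\delta(\om)$ as a combination of cup products of $\om$ with $e$ and with a lift $\wt e\in H^1(C,D_{\le1}(V,\det V))$; its two components, in $H^1(\Om^1_C\ot\det V)$ and $H^1({\bigwedge}^2V\ot V)$, must be arranged to equal $\nabla_{0*}(\eta)$ and $\kappa_*(\eta)$ for a common $\eta\in H^1(\det V)$. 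On a genus $2$ curve the groups $H^0(\Om^1_C\ot V)$ and $H^1(\det V)$ are nonzero and wide enough to make this possible by a suitable choice of the extension, whereas for genus $0$ or $1$ the relevant cohomology is too constrained; characteristic $\neq2$ is needed since Proposition~\ref{char-2-prop} forces $E_2(X)_-=0$ in characteristic $2$, and characteristic $\neq3$ entered above through the Koszul splitting.

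The main obstacle is precisely this last step: computing $\delta$ explicitly from the gluing cocycle, and checking that an $e$ aligning the two components of $\delta(\om)$ with $\im(d|_{\det V})_*$ on some $\om$ really exists (rather than being excluded by the genus $2$ geometry). Everything else is bookkeeping — in particular keeping track of the non-coherent sheaf $Q$ and of the non-split weight filtrations of $W_1$ and $F^3(\Om^1_X)_-$, which, because only two adjacent weights occur, does not in the end affect the dimension count.
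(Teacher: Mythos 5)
Your setup is sound and in fact runs parallel to the paper's: since $H^0(V)=0$, the subspace $H^1(\NN^3)=H^1(\det V)$ injects into $H^1((\OO_X)_-)$, and a class there dies in $H^1((\Om^1_X)_-)$ exactly when its image under $(d|_{\det V})_*$ lands in the image of the connecting map $\delta$ from $H^0(W_1)=H^0(\om_C\ot V)$; your identification of the graded pieces, the split injectivity of $\kappa$ (char $\neq 3$), and the resulting lower bound $\dim E_2^{01}(X)_-\ge\dim\bigl(\im\delta\cap\im(d|_{\det V})_*\bigr)$ are all correct. The gap is the last step, which you yourself flag as "the main obstacle": you give no argument that the intersection can be made nonzero, and for the bundle $V$ you propose it provably cannot. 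Decompose $V\ot\bigwedge^2V\simeq\det V\oplus\bigl(\und{\End}_0(V)\ot\det V\bigr)$. Matching $\delta(\om)$ with $(d|_{\det V})_*\eta$ requires (a) the component of $\delta(\om)$ in $H^1(\und{\End}_0(V)\ot\det V)$ to vanish while the component in $H^1(\det V)$ is nonzero, and (b) a further matching in $H^1(\om_C\ot\det V)$ (your "secondary component"). For $V=L_1\oplus L_2\oplus L_3$ with $\deg L_i=-1$ these spaces have dimensions $32$ and $2$, and writing $e=(e_{ij})$, $\om=(\om_k)$, condition (a) forces $\om_k\cup e_{ij}=0$ for $k\in\{i,j\}$ (two independent one-dimensional kernels inside the five-dimensional $H^1(T_CL_iL_j)$, hence $e_{ij}=0$ unless the $L_i$ coincide, in which case the surviving term $\om_1\cup e_{23}$ is also killed) together with $\om_1\cup e_{23}=\om_2\cup e_{31}=\om_3\cup e_{12}\neq 0$ up to sign -- an alignment that the genus-$2$ geometry does not permit for such $V$. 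So the claim that these groups are "wide enough" is not merely unproved; it is false for your choice.

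What the paper does at this point is the actual content of the theorem: it isolates the two vanishing conditions $H^1(\und{\End}_0(V)\ot\det V)=0$ (which makes $\kappa_*:H^1(\det V)\to H^1(V\ot\bigwedge^2V)$ an isomorphism, so condition (a) reduces to nonvanishing of the single cup product $H^0(\om_C\ot V)\xrightarrow{\cup e}H^1(\det V)$) and $H^0(\det(V)^{-1})=0$ (equivalently $H^1(\om_C\ot\det V)=0$, which eliminates the secondary obstruction entirely), alongside $H^0(V)=0$, $H^0(\om_C\ot V)\neq0$, $H^1(\det V)\neq0$. Note that $H^0(\det(V)^{-1})=0$ already rules out any $V$ of determinant degree $-3$ on a genus-$2$ curve. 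Producing a rank-$3$ bundle satisfying all five conditions is the hard part: the paper takes $V=\pi_*L$ for a cyclic triple cover $\pi:\wt C\to C$ attached to a $3$-torsion line bundle and a carefully chosen degree-$1$ line bundle $L$ on $\wt C$ with $\Nm(L)=\OO_C(p)$, and verifies $h^1(\und{\End}_0(V)\ot\det V)=0$ via $H^1\bigl((\tau^{\pm1})^*L\ot L^{-1}(\pi^{-1}(p))\bigr)=0$, which in turn requires the Prym-variety analysis of Lemma \ref{prym-lem}. None of this is present in your proposal, so as it stands the proof is incomplete at exactly the decisive point.
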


We start with some auxiliary statements.

\begin{lemma}\label{exterior-power-lem}
Assume the characteristic of $k$ is $\neq 2,3$.
For a $k$-vector space $V$, the linear map
$$\tau_V: V\ot {\bigwedge}^2 V\to V\ot {\bigwedge}^2 V: v_1\ot (v_2\we v_3)\mapsto v_3\ot (v_1\we v_2)- v_2\ot (v_1\we v_3)$$
is invertible.
\end{lemma}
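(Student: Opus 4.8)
The plan is to prove the quadratic identity
$$\tau_V^2=\tau_V+2\cdot\id$$
on $V\ot\bigwedge^2 V$, after which invertibility is immediate: rewriting it as $\tau_V\circ(\tau_V-\id)=2\cdot\id$ and using that $2$ is invertible in $k$, one obtains $\tau_V^{-1}=\frac{1}{2}(\tau_V-\id)$. (Only $\operatorname{char}k\neq 2$ is actually used here; the hypothesis $\operatorname{char}k\neq 3$ is inherited from the setting of Theorem \ref{nondeg-E2-thm}.)

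First I would record that $\tau_V$ is well defined: the expression $v_3\ot(v_1\we v_2)-v_2\ot(v_1\we v_3)$ is bilinear in $v_1$ and in the pair $(v_2,v_3)$ and changes sign under $v_2\leftrightarrow v_3$, so it descends to $V\ot\bigwedge^2 V$; equivalently, $\tau_V$ is induced on $V\ot\bigwedge^2 V$ by a fixed element of $k[S_3]$ acting on $V^{\ot 3}$, so in particular it is functorial in $V$.

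The core step is to evaluate $\tau_V^2$ on a decomposable element $x=v_1\ot(v_2\we v_3)$. One application of $\tau_V$ gives $v_3\ot(v_1\we v_2)-v_2\ot(v_1\we v_3)$; applying $\tau_V$ to each of these two terms, with $v_1,v_2,v_3$ relabelled according to which vector sits in the first tensor slot, produces four terms, and after rewriting all of them with the wedge factors in the fixed order $v_1\we v_2$, $v_1\we v_3$, $v_2\we v_3$ using antisymmetry of the wedge, they collapse to
$$\tau_V^2(x)=2\,v_1\ot(v_2\we v_3)+v_3\ot(v_1\we v_2)-v_2\ot(v_1\we v_3)=2x+\tau_V(x).$$
Since such decomposable elements span $V\ot\bigwedge^2 V$, this establishes $\tau_V^2=\tau_V+2\cdot\id$, and hence the lemma.

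There is no real obstacle here; the only delicate point is the sign bookkeeping among the four intermediate terms of $\tau_V^2(x)$. As a consistency check one can observe that, in characteristic zero, $V\ot\bigwedge^2 V$ splits as $\bigwedge^3 V$ together with a complementary $\GL(V)$-isotypic summand, and $\tau_V$ acts on these two pieces by the scalars $2$ and $-1$ respectively — precisely the roots of $t^2-t-2$ — which also makes transparent why $\tau_V$ can fail to be invertible only when $\operatorname{char}k=2$.
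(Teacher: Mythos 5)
Your proof is correct: the identity $\tau_V^2=\tau_V+2\cdot\id$ does hold (I checked the four-term expansion, including on degenerate elements like $v\ot(v\we w)$, where $\tau_V$ acts by $-1$), well-definedness is handled, and the inversion $\tau_V^{-1}=\frac12(\tau_V-\id)$ only needs $2$ invertible. The route is genuinely, if mildly, different from the paper's: there one observes that $\tau_V=3\pi-\id$ where $\pi$ is the explicit cyclic-symmetrization projector $v_1\ot(v_2\we v_3)\mapsto \frac13\sum_{\mathrm{cyc}} v_1\ot (v_2\we v_3)$, so that $\tau_V$ has eigenvalues $2$ and $-1$ on the image and kernel of $\pi$. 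The two arguments encode the same structural fact — $\tau_V$ is the image of $\sigma+\sigma^2$ for a $3$-cycle $\sigma$ in $k[S_3]$, whence $(\sigma+\sigma^2)^2=(\sigma+\sigma^2)+2$ — but your version buys something concrete: it never divides by $3$, so it shows the lemma actually holds whenever $\operatorname{char}k\neq 2$ (in characteristic $3$ the polynomial $t^2-t-2$ becomes $(t+1)^2$ and $\tau_V^{-1}=\id-\tau_V$), whereas the paper's projector is undefined in characteristic $3$. The paper's version buys the eigenspace decomposition (the $\bigwedge^3V$ summand and its complement), which you correctly recover as a consistency check but which is not needed for invertibility.
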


\begin{proof}
Let us consider the projector $\pi$ on $V\ot {\bigwedge}^2 V$ (onto a subspace isomorphic to ${\bigwedge}^3 V$) given by
$$v_1\ot (v_2\we v_3)\mapsto \frac{1}{3}\bigl(v_1\ot (v_2\we v_3)+v_2\ot (v_3\we v_1)+v_3\ot (v_1\we v_2)\bigr).$$
Then we have $\tau_V=3\pi-\id$, so it has eigenvalues $2$ and $-1$.
\end{proof}

\begin{lemma}\label{rk3-bundle-lem}
Assume the characteristic of $k$ is $\neq 2,3$.
Let $C$ be a smooth projective curve over $k$, and let $V$ be a rank $3$ vector bundle on $C$ such that
$$H^1(\und{\End}_0(V)\ot \det(V))=H^0(V)=H^0(\det(V)^{-1})=0,$$ 
while $H^0(\om_C\ot V)\neq 0$ and $H^1(\det(V))\neq 0$.
Then there exists a smooth supervariety $X$ of dimension $1|3$ with $X_0=C$, and $\NN/\NN^2=V$, such that $E_2^{01}(X)_-\neq 0$.
\end{lemma}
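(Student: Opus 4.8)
The plan is to construct $X$ via Lemma~\ref{13-constr-lem} and then to compute $E_2^{01}(X)_- = \ker\bigl(H^1(X,(\OO_X)_-)\to H^1(X,(\Om^1_X)_-)\bigr)$ cohomologically on $C$. The key preliminary fact is that the complex of sheaves $(\Om^\bullet_X)_-$ is \emph{exact}: this is local on $C$, and over an open set where $X$ is split one has $\Om^\bullet_X\cong\Om^\bullet_C\ot_k\Om^\bullet_{\A^{0|3}}$, hence $(\Om^\bullet_X)_-\cong\Om^\bullet_C\ot_k(\Om^\bullet_{\A^{0|3}})_-$, and $(\Om^\bullet_{\A^{0|3}})_-$ is acyclic because $H_{dR}(\A^{0|3})=k$ is purely even (Theorem~\ref{deR-thm}). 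In particular $d\colon(\OO_X)_-\to(\Om^1_X)_-$ is injective, so with $\mathcal K:=\coker(d)$ the long exact sequence of $0\to(\OO_X)_-\to(\Om^1_X)_-\to\mathcal K\to0$ on the curve $C$ gives $E_2^{01}(X)_-=\coker\bigl(H^0((\Om^1_X)_-)\to H^0(\mathcal K)\bigr)$, and therefore
$$\dim E_2^{01}(X)_- = h^1((\OO_X)_-)+h^1(\mathcal K)-h^1((\Om^1_X)_-).$$
So it is enough to evaluate these three numbers and to show that the alternating sum is positive.

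Next I would use the filtration of $\Om^\bullet_X$ by the number of occurrences of $dx$ (where $x$ is the even local coordinate). This splits $(\Om^\bullet_X)_-$ as a direct sum of sheaves $F^\bullet\oplus Q^\bullet$, with $Q^p$ the odd part of $\OO_X\ot S^p(\NN/\NN^2)$ and $F^p\cong Q^{p-1}\ot\om_C$; the de Rham differential is $d=d_\th+d_x$, where $d_\th=\sum_i d\th_i\wedge\pa_{\th_i}$ is the Koszul-type part (preserving $F^\bullet$ and $Q^\bullet$) and $d_x$ is a first-order part carrying $Q^p$ into $F^{p+1}$. Using that in dimension $1|3$ one has $\NN^2=\bigwedge^2V\oplus\bigwedge^3V$, $\NN^3=\bigwedge^3V=\det V$ and $\NN^4=0$, this presents $(\OO_X)_-$, $(\Om^1_X)_-$, and — via the exactness of $(\Om^\bullet_X)_-$, which identifies $\mathcal K$ with $\ker\bigl(d\colon(\Om^2_X)_-\to(\Om^3_X)_-\bigr)$ — also $\mathcal K$, through short exact sequences built from the coherent sheaves $V$, $\bigwedge^2V=V^\vee\ot\det V$, $\det V$ and $\und\End_0(V)\ot\det V$ on $C$, possibly twisted by $\om_C$. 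The crucial algebraic input appears when one meets the map $\det V\to\bigwedge^2V\ot V$ produced by the term $d(\th_1\th_2\th_3)$ of the de Rham differential: Lemma~\ref{exterior-power-lem} (invertibility of $\tau_V$, where $\mathrm{char}\ne2,3$ is used) shows that this map is a split injection, so that $\bigwedge^2V\ot V\cong(\und\End_0(V)\ot\det V)\oplus\det V$.

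To finish, I would feed the hypotheses into this count via Serre duality on $C$. The vanishing $H^1(\und\End_0(V)\ot\det V)=0$ kills the contribution of the new summand; $H^0(V)=0$ and $H^0(\det V^{-1})=0$ (equivalently $H^1(\om_C\ot\det V)=0$) pin down the $H^1$'s of $(\OO_X)_-$ and of the $\om_C$-twisted pieces; and the two non-vanishings $H^1(\det V)\ne0$ and $H^0(\om_C\ot V)\ne0$ (equivalently $H^1(V^\vee)\ne0$) are precisely what forces $h^1((\OO_X)_-)+h^1(\mathcal K)-h^1((\Om^1_X)_-)\ge1$. Thus $E_2^{01}(X)_-\ne0$, and since $H^1_{dR}(X)\cong H^1_{dR}(C)$ is purely even, this already shows that the Hodge to de Rham spectral sequence of $X$ does not degenerate at $E_2$.

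The step I expect to be the main obstacle is the precise computation of $\mathcal K=\coker(d)$ and of the three cohomology groups above. Because $d$ is only $k$-linear, the sheaves $(\OO_X)_-$, $(\Om^1_X)_-$ and $\mathcal K$ are modules over the non-reduced even structure sheaf $\OO_Y=(\OO_X)_+$ — a square-zero extension of $\OO_C$ by $\bigwedge^2V$ — rather than over $\OO_C$, and the exact sequences relating them to coherent sheaves on $C$ are only exact sequences of abelian sheaves whose connecting maps interleave the de Rham differential of $C$ with the algebraic $d_\th$. Keeping careful track of which contributions are $\OO_C$-linear (hence controlled by Serre duality) and which are ``derivation-like'', and checking that the resulting cancellations leave exactly the predicted dimension, is the delicate part; one must also make an appropriate choice of the square-zero extension in Lemma~\ref{13-constr-lem} when invoking it.
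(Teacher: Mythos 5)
Your general framework is sound and not far from the paper's: the complex of sheaves $(\Om^\bullet_X)_-$ is indeed acyclic (it is the odd part of the acyclic complex $K=F^1_\NN$ from the proof of Theorem \ref{deR-thm}), so $E_2^{01}(X)_-$ is computed by coherent cohomology on the curve, and the paper likewise works with the $\NN$-adic pieces $V$, $\bigwedge^2V\simeq V^\vee\ot\det V$, $\det V$ and the decomposition $V\ot\bigwedge^2V\simeq\det V\oplus\und{\End}_0(V)\ot\det V$. But there is a genuine gap at the decisive step. The lemma is an \emph{existence} statement over the choice of the class $e\in H^1(T_C\ot\bigwedge^2V)$ in Lemma \ref{13-constr-lem}, and your closing claim --- that the hypotheses $H^0(\om_C\ot V)\neq 0$ and $H^1(\det V)\neq 0$ "force" $h^1((\OO_X)_-)+h^1(\mathcal K)-h^1((\Om^1_X)_-)\ge 1$ --- cannot be correct as stated: for $e=0$ the construction yields the split supervariety $\bigwedge^\bullet V$, which satisfies all the same hypotheses on $V$ yet has $E_2(X)_-=0$ by Theorem \ref{split-thm}. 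So the non-vanishing must come from a specific choice of $e$, and your proposal never says how that choice is made or why a good one exists; the two quantities $h^1((\Om^1_X)_-)$ and $h^1(\mathcal K)$ both vary with $e$ through connecting maps that you do not identify.

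What is missing is precisely the core of the paper's argument. The paper restricts attention to the image of $H^1(\NN^3)\simeq H^1(\det V)$ in $H^1((\OO_X)_-)$ (injective since $H^0(V)=0$), reduces modulo $\NN^3\Om^1_{X_0}$ (harmless since $H^1(\det V\ot\om_C)$ is dual to $H^0(\det(V)^{-1})=0$), and then (a) identifies the extension class of $0\to\NN^2d\NN\to(\NN\Om^1)_-/\NN^3\Om^1_{X_0}\to V\ot\om_C\to 0$ as the image of $e$ under composition with the automorphism $\tau_V$ of Lemma \ref{exterior-power-lem} --- this, not the splitting of the Koszul map $\det V\to V\ot\bigwedge^2V$, is where $\mathrm{char}\,k\neq 2,3$ is really used; and (b) shows, using $H^1(\und{\End}_0(V)\ot\det V)=0$ to reduce to the $\det V$-summand, that the sought kernel is nonzero iff $\cup e:H^0(\om_C\ot V)\to H^1(\det V)$ is nonzero, and that such an $e$ exists because by Serre duality this amounts to the injectivity of multiplication by a nonzero section of $\om_C\ot\det(V)^{-1}$ (which exists since $H^1(\det V)\neq 0$) on the nonzero space $H^0(\om_C\ot V)$. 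Without steps (a) and (b) your outline does not yet constitute a proof; with them, your Euler-characteristic bookkeeping becomes unnecessary, since one exhibits an explicit nonzero kernel class directly.
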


\begin{proof}
By Lemma \ref{13-constr-lem}, for any class $e\in H^1(T_C\ot \bigwedge^2 V)$, there exists a supervariety $X$ 
of dimension $1|3$ with $X_0=C$, and $\NN/\NN^2=V$, such that the extension class of $(\OO_X)_+\to \OO_C$ is $e$.
We will show that there exists a choice of $e$ such that $E_2^{01}(X)_-\neq 0$.

The exact sequence
$$0\to \NN^3\to\OO_-\to V\to 0$$
shows that the map $H^1(\NN^3)\to H^1(\OO_-)$ is injective. Thus, it is enough to prove
that the map $H^1(\NN^3)\to H^1((\NN\Om^1)_-)$ has a nontrivial kernel.
Since $\NN^3\Om^1_{X_0}\simeq \det(V)\ot \om_C$ has trivial $H^1$ (which is dual to $H^0(\det(V)^{-1})=0$), it is enough to prove that the map
$$H^1(\NN^3)\to H^1((\NN\Om^1)_-/\NN^3\Om^1_{X_0})$$
has a nontrivial kernel.

We have an exact sequence of $\OO_{X_0}$-modules
\begin{equation}\label{N2dN-ex-seq}
0\to \NN^2d\NN\to (\NN\Om^1)_-/\NN^3\Om^1_{X_0}\to \NN/\NN^2\ot \Om^1_{X_0}\to 0.
\end{equation}
Note that if $\si_i:\OO_{X_0}\to (\OO_X)_+$ are local splittings of the extension $(\OO_X)_+\to \OO_C$ then we have induced splittings of 
\eqref{N2dN-ex-seq} given by $n\ot df\mapsto \wt{n}\cdot d\si_i(f)$ (where $\wt{n}\in \NN_-$ is a lifting of $n\in \NN/\NN^2$). The difference between two such splittings
is given by the $1$-cocycle $n\ot df\mapsto \wt{n}\cdot d\lan v_{ij},df\ran$, where $v_{ij}$ is the $1$-cocycle representing $e\in H^1(T_{X_0}\ot {\bigwedge}^2 V)$.
Using the Leibnitz rule we deduce that the extension class of \eqref{N2dN-ex-seq} in $\Ext^1(V\ot \Om^1_{X_0}, V\ot \bigwedge^2 V)$ is equal to the image of $e$
under the natural map
$$H^1(T_{X_0}\ot {\bigwedge}^2 V)\simeq \Ext^1(\Om^1_{X_0}, {\bigwedge}^2 V)\to \Ext^1(V\ot \Om^1_{X_0}, V\ot {\bigwedge}^2 V)\rTo{\tau_V\circ}
\Ext^1(V\ot \Om^1_{X_0}, V\ot {\bigwedge}^2 V),$$
where the last arrow is induced by the automorphism $\tau_V$ (see Lemma \ref{exterior-power-lem}).

We have an isomorphism $\bigwedge^2 V\simeq V^\vee\ot \det(V)$, so we have a direct sum decomposition
$$V\ot {\bigwedge}^2 V\simeq V\ot V^\vee \ot \det(V)\simeq \det(V)\oplus \und{\End}_0(V)\ot \det(V).$$
By the assumption, $H^1(\und{\End}_0(V)\ot \det(V))=0$, so the map
$$H^1(\NN^3)\simeq H^1(\det(V))\to H^1(V\ot {\bigwedge}^2 V)$$
is an isomorphism. Thus, it is enough to check that the connecting homomorphism 
$$H^0(\om_C\ot V)\to H^1(V\ot {\bigwedge}^2 V)$$
associated with \eqref{N2dN-ex-seq} is nonzero. Since the projection $V\ot \bigwedge^2 V\to \det(V)$ induces an isomorphism on $H^1$,
this is equivalent to showing that the homomorphism
$$H^0(\om_C\ot V)\rTo{\cup e} H^1(\det(V))$$
is nonzero, where we view $e$ as an element of $H^1(\om_C^{-1}\ot V^\vee\ot \det(V))$.
Thus, we need to choose $e$ such that this cup product map is nonzero.

This is possible as long as the map
$$H^0(\om_C\ot V)\ot H^1(\om_C^{-1}\ot V^{\vee}\ot \det(V))\to H^1(\det(V))$$
is nonzero. By Serre duality, we need to check that the map
$$H^0(\om_C\ot V)\ot H^0(\om_C\ot \det(V)^{-1})\to H^0(\om_C^2\ot V\ot \det(V)^{-1})$$
is nonzero. By assumption, there exists a nonzero element $s\in H^0(\om_C\ot \det(V)^{-1})$ (this space is dual to $H^1(\det(V))\neq 0$).
The multiplication by $s$ is an injective map
$$H^0(\om_C\ot V)\rTo{s} H^0(\om_C^2\ot V\ot \det(V)^{-1}).$$
Hence, our assertion follows from non-vanishing of $H^0(\om_C\ot V)$.
\end{proof}

We will need some facts about analogs of Prym varieties associated with cyclic coverings (where we assume that the characteristic is $\neq 3$).

\begin{lemma}\label{prym-lem}
Let $\pi:\wt{C}\to C$ be a cyclic covering associated with a line bundle $\xi$ of order $3$ on a smooth projective curve $C$,
and let $\tau:\wt{C}\to \wt{C}$ be an action of a nontrivial element of the Galois group $\Z/3\Z$. 

\noindent
(i) The morphisms $\pi^*:J_C\to J_{\wt{C}}$ and $\Nm_{\wt{C}/C}:J_{\wt{C}}\to J_C$ are dual to each other with respect to
the canonical principal polarizations on the Jacobians.
One has 
$$\Nm_{\wt{C}/C}\circ \pi^*=[3]_{J_C}, \ \ \pi^*\circ\Nm_{\wt{C}/C}=\tau_*+\tau^{-1}_*+\id_{J_{\wt{C}}}.$$

\noindent
(ii) One has $\ker(\pi^*)=\lan \xi\ran\sub J_C$, the subgroup of order $3$ generated by $\xi$.

\noindent
(iii) Set 
$$\wt{P}=\ker(\Nm_{\wt{C}/C}:J_{\wt{C}}\to C),$$ 
and let $P\sub \wt{P}$ denote the connected component of $0$ in $\wt{P}$ ($P$ is an analog of the Prym variety).
Then $P=(\tau-\id)(J_{\wt{C}})$, $\tau-\id:P\to P$ is an isogeny,
and $|\wt{P}/P|=3$.
\end{lemma}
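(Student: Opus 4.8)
The plan is to prove the three parts in order, using (i) freely in the proof of (iii), and relying throughout on standard facts about Jacobians, dual abelian varieties and principal polarizations. For (i): the duality between $\pi^*$ and $\Nm_{\wt C/C}$ with respect to the canonical principal polarizations is the classical compatibility of pullback and pushforward on Jacobians, a consequence of the autoduality of $J_C$ and $J_{\wt C}$ and the functoriality of Abel--Jacobi maps. The formula $\Nm_{\wt C/C}\circ\pi^*=[3]_{J_C}$ is the projection-formula identity $\Nm_{\wt C/C}(\pi^*M)=M^{\ot 3}$ for line bundles $M$ on $C$ (the degree of $\pi$ being $3$), and the formula $\pi^*\circ\Nm_{\wt C/C}=\tau_*+\tau^{-1}_*+\id$ is the identity $\pi^*\Nm_{\wt C/C}(N)=\bigotimes_{g\in\Z/3}g^*N$ valid for the Galois $\Z/3$-cover $\pi$, once one notes that the three operators $g^*$ on $J_{\wt C}$ are exactly $\id,\tau_*,\tau^{-1}_*$.

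For (ii): let $\wt C$ be the cyclic cover of $C$ given by $\bigoplus_{i=0}^{2}\xi^{-i}$ with its natural algebra structure coming from a chosen trivialization $\xi^{\ot 3}\simeq\OO_C$. The cube of the tautological section of $\pi^*\xi$ trivializes it, so $\lan\xi\ran\sub\ker\pi^*$. Conversely, if $\pi^*L\simeq\OO_{\wt C}$ then $\bigoplus_{i=0}^2 H^0(C,L\ot\xi^{-i})=H^0(\wt C,\pi^*L)$ is one-dimensional, so one of the degree-zero line bundles $L\ot\xi^{-i}$ has a nonzero section and is hence trivial, giving $L\in\lan\xi\ran$. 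Since $\xi$ has exact order $3$, $\ker\pi^*=\lan\xi\ran$ has order $3$.

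For (iii): set $\sigma:=\tau_*$ on $A:=J_{\wt C}$, so $\sigma^3=\id$ and, by (i), $\id+\sigma+\sigma^2=\pi^*\circ\Nm_{\wt C/C}$, which vanishes on $\wt P=\ker\Nm_{\wt C/C}$; thus $\sigma^2+\sigma+\id=0$ on $\wt P$, hence on its identity component $P$. Since $\Nm_{\wt C/C}$ is $\tau$-invariant, $\Nm_{\wt C/C}\circ(\sigma-\id)=0$, so the connected subgroup $\im(\sigma-\id)\sub A$ is contained in $\wt P$, hence in $P$. On $\wt P$ one computes $(\sigma-\id)(\sigma^2-\id)=2\id-\sigma-\sigma^2=3\id$, so $\sigma-\id$ is a factor of the isogeny $[3]$ on $P$ (it has finite kernel and surjective image) and is therefore itself an isogeny $P\to P$. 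Hence $P=(\sigma-\id)(P)\sub(\sigma-\id)(A)\sub P$, so $(\sigma-\id)(A)=P$; since $\tau^*-\id$ differs from $\tau_*-\id$ by an automorphism of $A$, the statement $P=(\tau-\id)(J_{\wt C})$ is unambiguous and $\tau-\id:P\to P$ is an isogeny.

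It remains to show $|\wt P/P|=3$. By (i), $\wt P=\ker(\Nm_{\wt C/C})$ is the kernel of the homomorphism dual (via the principal polarizations) to $\pi^*:J_C\to J_{\wt C}$. For a homomorphism $f$ of abelian varieties over an algebraically closed field with finite kernel, the dual $\hat f$ is surjective, $(\ker\hat f)^0$ is an abelian variety, and $\pi_0(\ker\hat f)$ is canonically the Cartier dual $(\ker f)^D$ --- this is the standard mechanism producing, e.g., the two connected components of a classical Prym variety. Applying it to $f=\pi^*$ and invoking (ii), we get $P=(\wt P)^0$ and $\pi_0(\wt P)\simeq\lan\xi\ran^D$, a group of order $3$. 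The step I expect to be most delicate is this last one: one must check that $\Nm_{\wt C/C}$ is exactly the polarization-dual of $\pi^*$ (not merely dual up to an isogeny or a sign) and invoke the component-group statement correctly. An alternative, more computational route deduces $|\wt P/P|=3$ from the fact that $\pi^*(J_C)$ and $P$ are complementary abelian subvarieties of the principally polarized $J_{\wt C}$ together with $\ker\pi^*=\lan\xi\ran$ being an isotropic subgroup of order $3$ in $J_C[3]=K(3\theta_C)$, but this requires tracking polarization types.
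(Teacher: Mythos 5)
Your proof is correct, and for most of the lemma it follows the same route as the paper: part (i) is quoted as classical in both; your argument for (ii) via $h^0(\pi^*L)=\sum_i h^0(L\otimes\xi^{-i})=1$ is the same computation the paper makes (the paper phrases the converse as ``$M\otimes\pi_*\OO\simeq\pi_*\OO$, so $M$ is a summand of $\pi_*\OO$,'' which implicitly uses the same section count or Krull--Schmidt; your version is if anything slightly more self-contained); and your derivation of $|\wt{P}/P|=3$ from $\pi_0(\ker\hat f)\simeq(\ker f)^D$ applied to $f=\pi^*$ is exactly the paper's argument, which just unwinds that general fact by hand via the factorization $\Nm_{\wt{C}/C}=f\circ p$ through $A=J_{\wt{C}}/P$ and the dual exact sequence. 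The one place where you genuinely diverge is the claim that $\tau-\id:P\to P$ is an isogeny: the paper proves this analytically/infinitesimally, identifying $T_0P$ with $\ker(H^1(\wt{C},\OO)\to H^1(C,\OO))$ and using that $\tau$ acts there with eigenvalues $\zeta^{\pm1}\neq 1$ coming from the decomposition $\pi_*\OO_{\wt{C}}=\OO\oplus\xi\oplus\xi^{-1}$; you instead use the purely algebraic identity $(\sigma-\id)(\sigma^2-\id)=2\id-\sigma-\sigma^2=[3]$ on $\wt{P}$ (valid because $\id+\sigma+\sigma^2=\pi^*\circ\Nm$ kills $\wt{P}$), which exhibits $\sigma-\id$ as a factor of the isogeny $[3]_P$ and hence as an isogeny. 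Your argument is cleaner in that it avoids any tangent-space or separability considerations, while the paper's eigenvalue computation is more geometric and reuses the decomposition of $\pi_*\OO_{\wt{C}}$ already introduced for part (ii). Your attention to the distinction between $\tau_*-\id$ and $\tau^*-\id$ (they differ by the automorphism $-\tau_*^{\mp1}$, so have the same image) is a point the paper leaves implicit.
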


\begin{proof}
(i) This is well known (see \cite[Sec.\ 11.4]{BL}).

\noindent 
(ii) Recall that $\pi_*\OO_{\wt{C}}\simeq \OO_C\oplus \xi\oplus \xi^{-1}$. 
Hence, 
$$h^0(\pi^*\xi)=h^0(\xi\ot \pi_*\OO)=h^0(\pi_*\OO)=1,$$
so $\pi^*\xi$ is a degree $0$ line bundle on $\wt{C}$ with a nonzero section, hence $\pi^*\xi\simeq \OO_{\wt{C}}$.

Conversely, assume $\pi^*M\simeq \OO$. Then 
$$M\ot\pi_*\OO\simeq \pi_*\pi^*M\simeq \pi_*\OO,$$ 
so $M$ is isomorphic to one of the summands in $\pi_*\OO$.

\noindent
(iii) Set $A:=J_{\wt{C}}/P$. We have an exact sequence of abelian varieties
\begin{equation}\label{ab-var-seq}
0\to P\to J_{\wt{C}}\rTo{p} A\to 0,
\end{equation}
where $p$ is induced by $\Nm_{\wt{C}/C}$,
and an isogeny $f:A\to J_C$, such that $\ker(f)\simeq \wt{P}/P$ and $\Nm_{\wt{C}/C}=f\circ p$.
Consider the dual isogeny $\hat{f}:J_C\to \hat{A}$, and the dual sequence to \eqref{ab-var-seq},
$$0\to \hat{A}\to J_{\wt{C}}\to \hat{P}\to 0.$$
Then the morphism $\pi^*:J_C\to J_{\wt{C}}$, being dual to  $\Nm_{\wt{C}/C}=f\circ p$, factors as the composition
of $\hat{f}$ followed by the embedding $\hat{A}\to J_{\wt{C}}$. Hence, we have 
$$\ker(\hat{f})=\ker(\pi^*)=\lan \xi\ran.$$
But $\ker(f)$ is Cartier dual to $\ker(\hat{f})$, so we deduce
$$|\wt{P}/P|=|\ker(f)|=|\ker(\hat{f})|=3.$$

Note that $\Nm_{\wt{C}/C}\circ\tau_*=\Nm_{\wt{C}/C}$, so $(\tau_*-\id)(J_{\wt{C}})$ is contained in $\wt{P}$, and hence, in $P$.
It remains to show that the morphism $(\tau_*-\id):P\to P$ is an isogeny. It is enough to show that the induced map on the tangent space $T_0P$ is nondegenerate.
We have 
$$T_0P\simeq \ker(H^1(\wt{C},\OO_{\wt{C}})\rTo{\tr} H^1(C,\OO_C)).$$
The decomposition $\pi_*\OO_{\wt{C}}=\OO_C\oplus \xi\oplus \xi^{-1}$ is compatible with the action of $\tau$: it corresponds to the eigenvalues $1$, $\zeta$ and $\zeta^{-1}$, where
$\zeta$ is a primitive $3$rd root of unity. Hence $\tau$ acts on $T_0P$ with eigenvalues $\zeta$ and $\zeta^{-1}$, which implies that $\tau-\id$ is nondegenerate.
\end{proof}

\begin{proof}[Proof of Theorem \ref{nondeg-E2-thm}]
It is enough to construct a rank $3$ vector bundle $V$ on $C$ satisfying the assumptions of Lemma \ref{rk3-bundle-lem}.

\noindent
{\bf Step 1. A triple covering and a special line bundle of order $3$}.
We start by choosing a line bundle $\xi$ of order $3$ on $C$. Let $\pi:\wt{C}\to C$ be the corresponding cyclic covering (where $\wt{C}$ has genus $4$). 
We will use the notations and the results of Lemma \ref{prym-lem}.
We claim that there exists a line bundle $\eta$ of order $3$, such that $\eta\not\in\lan \xi\ran$, and $\pi^*\eta\in P$.

Indeed, since $\Nm_{\wt{C}/C}\circ \pi^*=[3]_{J_C}$, we have a well defined homomorphism
$J_C[3]/\lan \xi\ran \to \wt{P}$, where $J_C[3]:=\ker([3]_{J_C})$. Let us consider the induced homomorphism of finite groups
$$J_C[3]/\lan \xi\ran \to \wt{P}/P.$$
Since $C$ has genus $2$, we have $|J_C[3]/\lan \xi\ran|=27$, whereas $|\wt{P}/P|=3$. Hence, there exists a nonzero element $\eta+\lan \xi\ran$ in the kernel,
where $\eta\in J_C[3]\setminus \lan \xi\ran$. But this means that $\pi^*\eta\in P$, as claimed.

\noindent
{\bf Step 2. Choosing a point}.
We claim that for a generic point $p\in C$ one has 
$$H^*(\xi(p))=H^*(\xi^{-1}(p))=H^*(\eta(p))=H^*(\eta\ot \xi(p))=H^*(\eta\ot \xi^{-1}(p)).$$
Indeed, for any nontrivial line bundle $M$ of degree $0$ one has $h^0(\om_C\ot M)=1$, so there exists a unique effective
divisor $D_M$ such that $\om_C\ot M\simeq \OO_C(D_M)$. Now we just need to choose $p$ not in the support of $D_{\xi}$, $D_{\xi^{-1}}$, $D_{\eta\xi}$ and $D_{\eta\xi^{-1}}$.
It follows that the line bundle $\pi^*\eta\in P$ satisfies
\begin{equation}\label{pi-eta-H0-vanishing}
H^0(\wt{C},\pi^*\eta(\pi^{-1}(p)))=H^0(C,\eta(p)\oplus \eta\xi(p)\oplus \eta\xi^{-1}(p))=0.
\end{equation}

\noindent
{\bf Step 3. Choosing a line bundle in $P$}.
We claim that there exists a line bundle $L$ of degree $1$ on $\wt{C}$ such that
$\Nm_{\wt{C}/C}(L)\simeq \OO_C(p)$, $H^0(\wt{C},L)=0$, and 
$$H^*(\wt{C},\tau_*L\ot L^{-1}(\pi^{-1}(p)))=H^*(\wt{C},\tau^{-1}_*L\ot L^{-1}(\pi^{-1}(p)))=0.$$ 

Indeed, let us look for $L$ in the form $L=M(q)$, where $p=\pi(q)$ and $M\in P$. We claim that our conditions are satisfied for generic $M\in P$.
Indeed, let us denote by $\Th_p\sub J_{\wt{C}}$ the theta-divisor associated with $\pi^{-1}(p)$.
We need to check that the following conditions hold for generic $M\in P$:
\begin{enumerate}
\item $H^0(\wt{C},M(q))=0$;
\item $\phi^\pm (M)\not\in \Th_p$, where $\phi^\pm=t_{\tau^{\pm 1}(q)-q}\circ (\tau^{\pm 1}_*-\id)$, where $t_{\tau^{\pm 1}(q)-q}$ is the translation by $\OO(\tau^{\pm 1}(q)-q)\in P$.
\end{enumerate} 
For (1), we observe that this condition holds for $M=\pi^*\eta$ by \eqref{pi-eta-H0-vanishing}, since 
$H^0(\pi^*\eta(q))\sub H^0(\pi^*\eta(\pi^{-1}(p)))$.
For (2), we observe that the maps $\phi^\pm:P\to P$ are surjective, hence, it is enough to prove the existence of $M'\in P$ such that $M'\not\in \Th_p$. By \eqref{pi-eta-H0-vanishing},
we can take $M'=\pi^*\eta$.

\noindent
{\bf Step 4. Checking the properties of the vector bundle}.
Now we set $V=\pi_*(L)$. We have $\det(V)\simeq \Nm_{\wt{C}/C}(L)\simeq \OO_C(p)$, so
$$H^0(\det(V)^{-1})=H^0(\OO_C(-p))=0,$$ 
while $H^1(\det(V))=H^1(\OO_C(p))\neq 0$. We also have
$$H^0(C,V)\simeq H^0(\wt{C},L)=0,$$
$$H^0(C,\om_C\ot V)\simeq H^0(\wt{C},\pi^*\om_C\ot L)\simeq H^0(\wt{C},\om_{\wt{C}}\ot L)\neq 0,$$
since $\deg(L)=1$.
Finally, since $\pi$ is unramified, we have an isomorphism
$V^\vee\simeq \pi_*(L^{-1})$, so
$$V^\vee\ot V\simeq \pi_*(\pi^*\pi_*(L)\ot L^{-1}).$$
Furthermore, $\pi^*\pi_*(L)\simeq L\oplus \tau^*L\oplus (\tau^{-1})^*L$. Hence,
$$V^\vee\ot V\simeq \pi_*\OO\oplus \pi_*(\tau^*L\ot L^{-1})\oplus \pi_*((\tau^{-1})^*L\ot L^{-1}).$$
We need to check that 
$$h^1(V^\vee\ot V\ot \det(V))=h^1(V^\vee\ot V(p))=1.$$
Since $\pi_*\OO\ot \OO_C(p)=\OO_C(p)\oplus \xi(p)\oplus \xi^{-1}(p)$, this follows from the vanishing
$$H^1(C,\pi_*((\tau^{\pm 1})^*L\ot L^{-1})(p))=H^1(\wt{C},(\tau^{\pm 1})^*L\ot L^{-1}(\pi^{-1}(p)))=0.$$
\end{proof}

\begin{remark} It is instructive to observe that the construction of Theorem \ref{nondeg-E2-thm} breaks in characteristic $2$ 
(since Lemma \ref{exterior-power-lem} needs characteristic $\neq 2$). Indeed, by Proposition \ref{char-2-prop}, in characteristic $2$ we
have $E_2^{01}(X)_-=0$.
\end{remark}

\end{document}